\documentclass[noamsfonts]{amsart}
\allowdisplaybreaks
\newcommand{\bm}[1]{#1}
\usepackage{setspace}
\usepackage[charter,expert]{mathdesign}
\usepackage[osf]{XCharter}
\usepackage{enumitem}
\selectfont
\usepackage[activate={true,nocompatibility},final]{microtype}

\usepackage[all, arc]{xy}
\SelectTips{eu}{}
\entrymodifiers={+!!<0pt,\fontdimen22\textfont2>}

\usepackage[pdftex, colorlinks=true, linkcolor=blue, citecolor=magenta, linktocpage]{hyperref}

\swapnumbers
\theoremstyle{plain}
\newtheorem{thm}[subsubsection]{Theorem}
\newtheorem{lemma}[subsubsection]{Lemma}
\newtheorem{prop}[subsubsection]{Proposition}
\newtheorem{cor}[subsubsection]{Corollary}

\theoremstyle{definition}

\newtheorem{remark}[subsubsection]{Remark}

\theoremstyle{definition}

\numberwithin{equation}{subsubsection}
\renewcommand\theequation{\bfseries\thesubsubsection.\arabic{equation}}

\def\cA{\mathcal{A}}
\def\cB{\mathcal{B}}
\def\cC{\mathcal{C}}
\def\cD{\mathcal{D}}

\def\cL{\mathcal{L}}

\def\cO{\mathcal{O}}

\def\cT{\mathcal{T}}

\def\11{\mathbf{1}}

\def\AA{\mathbb{A}} 
 
\def\CC{\mathbb{C}} 
\def\DD{\mathbf{D}} 
\def\EE{\mathbf{E}} 
 
\def\GG{\mathbb{G}}

\def\PP{\mathbb{P}}

\def\ZZ{\mathbb{Z}}

\def\fn{\mathfrak{n}}

\def\dim{\mathrm{dim}}

\def\Gr{\mathrm{Gr}}

\def\Hom{\mathrm{Hom}}

\def\id{\mathrm{id}}

\def\pH{\vphantom{H}^p\!H}

\def\Spec{\mathrm{Spec}}

\def\real{\mathrm{real}}

\def\Dhol{D^b_h}

\newcommand{\mapright}[1]{\xrightarrow{#1}}
\newcommand{\mapleft}[1]{\xleftarrow{#1}}

\newcommand{\FT}{\mathbf{FT}}
\newcommand{\const}[1]{\underline{\Lambda}_{#1}}

\newcommand{\plusotimes}{\overset{+}{\otimes}}

\makeatletter
\newcommand{\holim@}[2]{%
      \vtop{\m@th\ialign{##\cr
                  \hfil$#1\operator@font holim\,$\hfil\cr
                      \noalign{\nointerlineskip\kern1.5\ex@}#2\cr
                          \noalign{\nointerlineskip\kern-\ex@}\cr}}%
                      }
                      \newcommand{\holim}{%
                            \mathop{\mathpalette\holim@{\leftarrowfill@\textstyle}}\nmlimits@
                        }
                        \makeatother

                        \title{Realizations of exponential sheaves and Fourier transform}
\author{R. Virk}
\begin{document}
\maketitle
\tableofcontents
\renewcommand{\thesubsection}{\textbf{\arabic{subsection}}}
\renewcommand{\thesubsubsection}{\textbf{\arabic{subsection}.\arabic{subsubsection}}}

\subsection{Introduction}
\subsubsection{}
We study realizations of exponential sheaves.
Although we do this in the de Rham setting of Hodge modules and D-modules, there is a parallel story for $\ell$-adic sheaves over finite fields.\footnote{The formal setup is the same. The geometric inputs are proved by different arguments there, sometimes easier and sometimes harder, and there is no ready-made category of tame objects closed under Grothendieck's operations, owing to ramification.}
The categories of exponential sheaves $E(X)$ that we study are N. Katz's ``over-world'' \cite[Fourth Lecture]{Klectures}, equipped with the standard Grothendieck operations (attributed to R. Pink). Setting these up (\S\ref{s:expobjects}) is routine,\footnote{Similar formalism is developed independently, for motivic sheaves, in \cite{GL} and \cite{CHS}. I thank G. Ribeiro for the references.} and `realizations'\footnote{I.e., functors to familiar classical derived categories.} are straightforward to write down. The motivation comes from the cross-characteristic ``analogies'' between exponential sums \cite{K}. The sheaves incarnating these sums are irregular/wild, so Riemann-Hilbert does not aid in matching them in different characteristics via the usual method of spreading out. This is exactly the matching one would like, and the categories $E(X)$ offer a tentative solution.

\subsubsection{}
The objects of $E(X)$ are \emph{classes} of objects of $D(X\times\GG_a)$, the derived category of \emph{tame} objects (regular holonomic D-modules, or mixed Hodge modules). So $E(X)$ is built entirely from data that transfers via the Riemann-Hilbert correspondence. A class $K$ is realized as a classical object: each $\lambda\in\CC^\times$ yields a \emph{realization} functor $\real_\lambda\colon E(X)\to\Dhol(X)$ (see \S\ref{s:realizations}), sending $K$ to $\pi_{X!}(K\otimes\mu^*e^{-\lambda t})\in\Dhol(X)$. This is a holonomic but generally \emph{irregular} D-module owing to the exponential twist. Here $\pi_X$ and $\mu$ are the projections from $X\times\GG_a$, and $e^{-\lambda t}$ is the standard exponential D-module $f'+\lambda f=0$ (an Artin-Schreier sheaf in the parallel story in characteristic $p$, whence exponential sums through the trace formula).
The categories $E(X)$ carry a canonical `exponential kernel' $\EE$ (see \S\ref{s:expobject}) that exists even where no classical exponential sheaf does.\footnote{There is no single Artin-Schreier sheaf valid in all characteristics, and the complex exponential is irregular, underlying no mixed Hodge module and no Betti local system on $\AA^1(\CC)$.}
Under the realization $\real_{\lambda}$ it recovers the exponential D-module $e^{-\lambda t}$.
So $E(X)$ is a category built out of tame objects, while the objects one is really after, such as exponential sums and the cohomology of $e^f$, are irregular/wild and arise as the output of a realization. The question is what data about the irregular realizations can be read off $E(X)$.

\subsubsection{}
We show that the realizations $\real_\lambda$ are t-exact (Theorem \ref{texact}) and faithful on the naturally defined heart $EM(X)\subset E(X)$ (Theorem \ref{faith}). Both statements rest on an analysis of the partial Fourier transform applied to regular input: t-exactness comes down to single-degree concentration, for regular $K$, of the costalk $i_\lambda^! K^\wedge[1]$ (Proposition \ref{prop:weaknonchar}), and faithfulness to a conservativity statement (Proposition \ref{prop:conservative}). Neither holds for irregular/wild $K$, whose Fourier transform picks up singularities from nonzero slopes/breaks at infinity. In other words, neither statement is a formal consequence of the usual yoga of Grothendieck's operations.

\subsubsection{}
Over a point, $EM(\Spec(\CC))$ is the abelian category of exponential mixed Hodge structures of \cite[Chapter 4]{KS} (\S\ref{s:emsheart}). For general $X$, t-exactness and faithfulness let one read $E(X)$, with its operations, as a relative version of that picture over an arbitrary base. In particular, $EM(X)$ is built from mixed Hodge modules and so carries a weight filtration, which the realizations carry over to a notion of weights on the irregular D-modules in their image (\S\ref{s:weights}).

\subsubsection{}
The realizations also commute with $f^*$, $f_!$, $f^!$, $f_*$ and with Verdier duality (Theorem \ref{thm:commutesoperations}). The cases of $f^*$ and $f_!$ are trivial. A dual construction of realization, with $\pi_{X*}$ in place of $\pi_{X!}$, would make $f^!$ and $f_*$ trivial instead. The miracle is that the two constructions agree. The proof rests on the forget supports theorem (Theorem \ref{thm:forgetsupports}).

\subsubsection{}
Separately, on the categories $E(V)$ for $V$ a vector bundle, there is a `universal' Fourier transform. The canonical kernel $\EE$ satisfies an orthogonality property (Proposition \ref{orthogonality}), which uses the defining quotient property of $E(X)$ in an essential way and fails in $D(V\times\GG_a)$. Consequently, the transform is invertible (Theorem \ref{fourthm}) and satisfies the Fourier miracle, commuting with Verdier duality up to a twist (Theorem \ref{thm:fourmiracle}).\footnote{This is again a forget supports statement, like the duality commutation above. For the Fourier transform, duality also follows formally from inversion; the realizations are not invertible, so no such formal shortcut exists.} Under realization it recovers classical Fourier transforms (Theorem \ref{fouriercompatible}). Since realizations carry a notion of weight (\S\ref{s:weights}), purity makes sense even for these irregular objects, which a priori have none. The Fourier transform preserves the class of pure objects (Theorem \ref{thm:fourwts}).

\subsubsection{}
Appendix \ref{s:interlude} and Appendix \ref{s:forgetsupports} are self-contained and may be read independently of the rest of the note. They are written entirely in the classical language of holonomic D-modules (the categories $E(X)$ do not appear).

Appendix \ref{s:interlude} collects standard results on the partial Fourier transform used in the main body, and proves Proposition \ref{prop:weaknonchar}, the key non-formal input into the aforementioned t-exactness of realization.

Appendix \ref{s:forgetsupports} gives an independent proof of a stronger form of Proposition \ref{prop:weaknonchar}.
Although the main text needs it only for the compatibility of realization with duality (Theorem \ref{thm:commutesoperations}), it is expected to be a necessary input for any cross-characteristic comparison. 

\subsection{Conventions}
\subsubsection{}`Map' and `morphism' will be used interchangeably. `Canonical map' will be used as a synonym for `natural transformation of functors'.

\subsubsection{}For a sheaf $K$ on a topological space $Y$, we write $\Gamma(Y, K)$ or sometimes simply $\Gamma(K)$, when the space is clear, for the global sections of $K$.

\subsubsection{}
Most of the constructions below will be made simultaneously for holonomic D-modules with regular singularities and mixed Hodge modules. So for a variety $X$ (i.e., a separated scheme of finite type over $\Spec(\CC)$), we let $D(X)$ denote either:
\begin{enumerate}
\item $D^b(MHM(X))$ - the bounded derived category of algebraic mixed Hodge modules on $X$ (see \cite{S});
\item $D^b_{rh}(X)$ - the bounded derived category of algebraic D-modules on $X$ with regular holonomic cohomology modules.
\end{enumerate}
In the Hodge module setting, $(r)$ denotes the $r$-th Tate twist. For D-modules it is simply the identity functor.

\subsubsection{}
We write $M(X)\subset D(X)$ for the heart of the perverse t-structure on $D(X)$. I.e., $M(X)$ is the abelian category of mixed Hodge modules in the Hodge setting, and the category of regular holonomic D-modules in the non-mixed setting.

\subsubsection{}We write $D^b_h(X)$ for the bounded derived category of holonomic, not necessarily regular, D-modules on $X$. For $X$ smooth, the sheaf of differential operators on $X$ is denoted $\cD_X$. 

\subsubsection{}
Functors on derived categories are always derived: we write $f_*$ instead of $Rf_*$, etc. Verdier duality is denoted $\DD$.
Our functors $f^*, f^!, f_*, f_!$ on D-modules are normalized to commute with their counterparts under the Riemann-Hilbert correspondence, dictated by compatibility with the functors on mixed Hodge modules. Here is the translation from our notation to that of the textbook \cite{HTT} and the references \cite[\S7]{KL} and \cite{K}:
\begin{center}
\begin{tabular}{c|c|c}
Our notation & In \cite{HTT} & In \cite[\S7]{KL}, \cite{K} \\
\hline
$\DD$ & $\mathbb{D}$ & $\mathbb{D}$ \\
$f^!$ & $f^{\dagger}$ & $f^!$ \\
$f^*$ & $f^{\star} = \mathbb{D} f^{\dagger}\mathbb{D}$ & $f^* = \mathbb{D} f^! \mathbb{D}$ \\
$f_*$ & $\int_f$ & $f_*$ \\
$f_!$ & $\int_{f!} = \mathbb{D} \int_f \mathbb{D}$ & $f_! = \mathbb{D} f_* \mathbb{D}$ \\
$K_1\boxtimes K_2$ & $K_1\boxtimes K_2$ & $K_1 \times K_2$ \\
$K_1\otimes K_2$ & $\Delta^{\star}(K_1\boxtimes K_2)$ & $\Delta^*(K_1\times K_2)$
\end{tabular}
\end{center}

\subsubsection{}
The constant sheaf in $D(X)$ (i.e., the monoidal unit for $\otimes$) will be denoted $\const{X}$. When $X$ is smooth and connected, the D-module underlying $\const{X}[\dim(X)]$ is the structure sheaf $\cO_X$.

\subsection{Exponential objects}\label{s:expobjects}
\subsubsection{}
We start with an elementary general result that allows us to sidestep matching various morphisms across Verdier duality, proper base change, projection formula, etc. (see the proof of Lemma \ref{lem:adj} for a typical application).
\begin{prop}\label{prop:unitprop}
Let $F\colon \cA \to \cB$ and $G\colon \cB \to \cA$ be functors between categories $\cA$ and $\cB$, such that $F$ is left adjoint to $G$. If there exists \emph{any} natural isomorphism $\alpha\colon \id\mapright{\sim} GF$, then the unit of adjunction $\eta\colon \id \to GF$ is an isomorphism.
\end{prop}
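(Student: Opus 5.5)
The plan is to use the Yoneda lemma to express any natural transformation $\id \to GF$ through the unit $\eta$ and an endomorphism of $F$. Then I use $\alpha$ to produce a two-sided inverse.

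\emph{Step 1: reduce to endomorphisms of $F$.} By adjunction, natural transformations $\id_{\cA}\to GF$ correspond bijectively to natural transformations $F\to F$. The transformation $\theta\colon F\to F$ corresponds to $G\theta\circ\eta$, and conversely $\beta\colon\id\to GF$ corresponds to $\epsilon F\circ F\beta$, where $\epsilon$ is the counit. So there is a unique $\phi\colon F\to F$ with $\alpha = G\phi\circ\eta$. Symmetrically, precomposition with $\alpha$ identifies maps $GF\to\id$ with maps $\id\to\id$. I will instead produce a left and a right inverse of $\eta$ directly.

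\emph{Step 2: left inverse.} Set $\lambda := \alpha^{-1}\circ G\phi\colon GF\to\id$. Then $\lambda\circ\eta = \alpha^{-1}\circ G\phi\circ\eta=\alpha^{-1}\circ\alpha=\id$. Thus $\eta$ is a split monomorphism, objectwise and naturally.

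\emph{Step 3: right inverse.} This is the main point, since split monos need not be isomorphisms in general. I need $\eta\circ\lambda = \id_{GF}$, so I will compute $\eta\circ\lambda$ under the adjunction. Maps $GF\to GF$ of the form $G\psi$ are determined by their adjuncts, and I will compare adjuncts via the triangle identity $G\epsilon\circ\eta G=\id_G$.

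My first attempt is a trick that avoids this bookkeeping: transport along $\alpha$. Put $e := \eta\circ\lambda$, an idempotent endomorphism of $GF$ (because $\lambda\eta=\id$). Conjugating by $\alpha$, $e' := \alpha^{-1} e\,\alpha$ is an idempotent on $\id_{\cA}$, and $e'$ splits through $\alpha^{-1}\eta\colon\id\to\id$, which has left inverse $\lambda\alpha$. So the problem reduces to showing that the natural endomorphism $u:=\alpha^{-1}\eta$ of $\id_{\cA}$ is invertible.

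\emph{Step 4: invertibility of $u$.} This uses commutativity of $\mathrm{End}(\id_{\cA})$. Naturality of $u$ applied to the morphism $\eta_X\colon X\to GFX$ gives $\eta_X u_X = u_{GFX}\,\eta_X$. We have $u\circ(\lambda\alpha)$-type relations: from $\lambda\eta=\id$ we get $(\lambda\alpha)u=\id$, and $\lambda\alpha$ is itself a natural endomorphism of $\id$. Since $\mathrm{End}(\id_{\cA})$ is commutative (the standard Eckmann--Hilton argument, via naturality of one transformation with respect to the components of the other), a left inverse is also a right inverse, so $u$ is invertible. Hence $\eta=\alpha u$ is an isomorphism.

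The step I expect to need the most care is checking that $\lambda\alpha$ is genuinely a natural endomorphism of $\id_{\cA}$ and that commutativity applies componentwise. This follows because each of $\alpha^{-1}$, $G\phi$ and $\alpha$ is natural. With that in hand, Step 2 becomes essentially unnecessary beyond producing the identity $(\lambda\alpha)u=\id$.
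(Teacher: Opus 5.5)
Your proof is correct. Its first half coincides with the paper's: your $\lambda=\alpha^{-1}\circ G\phi$, with $\phi=\varepsilon F\circ F\alpha$, is exactly the paper's $s=\alpha^{-1}\circ G\varepsilon F\circ GF\alpha$. The identity $\lambda\circ\eta=\id$ is verified the same way in both, by naturality of $\eta$ and the triangle identity.

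The routes diverge at the right inverse. The paper proves $\eta\circ s=\id$ by a direct chase through naturality of $\eta$ and $\alpha$. Its key input is the identity $\alpha GF=GF\alpha$, which is naturality of $\alpha$ evaluated at its own components.

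You instead move the problem into $\mathrm{End}(\id_{\cA})$. There $u=\alpha^{-1}\eta$ and $w=\lambda\alpha$ are natural endomorphisms with $wu=\id$. Naturality of $u$ at the morphism $w_X$ gives $w_Xu_X=u_Xw_X$, so the left inverse is two-sided and $\eta=\alpha u$ is invertible. This is the same ``naturality against components'' trick as the paper's $\alpha GF=GF\alpha$. You deploy it in a commutative monoid, where one-sided inverses are automatically two-sided, so the whole second chase collapses to one line.

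Your argument also shows slightly more: once $GF\cong\id$, any naturally split monomorphism $\id\to GF$ is an isomorphism. The adjunction is used only to split $\eta$. The paper's version, by contrast, stays fully explicit in terms of $F$, $G$, $\eta$, $\varepsilon$.

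Several of your steps are unnecessary: the idempotent detour in Step 3, the naturality of $u$ at $\eta_X$, and the appeal to Eckmann--Hilton. The direct naturality argument you give parenthetically is all that is needed.
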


\begin{proof}Write $\varepsilon\colon FG\to \id$ for the counit of adjunction. Note that the composition:
\begin{equation}\label{eq:unit}
G\mapright{\eta G} GFG \mapright{G \varepsilon } G
\end{equation}
is the identity. Furthermore, by the naturality of $\alpha$, the following square commutes:
\[\xymatrix{
\id\ar[d]^{\alpha} \ar[r]^{\alpha} & GF \ar[d]^{GF\alpha} \\
GF \ar[r]^{\alpha GF} & GFGF
}\]
Since $\alpha$ is invertible, this yields:
\begin{equation}\label{eq:alpha}
\alpha GF = GF \alpha.
\end{equation}
Let $s\colon GF\to\id$ be the composition:
\[ GF \mapright{GF\alpha} GFGF \mapright{G\varepsilon F}GF \mapright{\alpha^{-1}} \id.\]
We will show that $s$ is an inverse to $\eta$.

The map $s\circ \eta$ is the composition:
\[ \id \mapright{\eta} GF \mapright{GF\alpha} GFGF \mapright{G\varepsilon F}GF \mapright{\alpha^{-1}} \id.\]
Naturality of $\eta$ implies that this coincides with:
\[ \id \mapright{\alpha} GF \mapright{\eta GF} GFGF \mapright{G\varepsilon F}GF \mapright{\alpha^{-1}} \id. \]
Now apply \eqref{eq:unit} to obtain that this is the identity.

On the other hand, $\eta\circ s$ is the composition:
\[ GF \mapright{GF\alpha} GFGF \mapright{G\varepsilon F}GF \mapright{\alpha^{-1}} \id \mapright{\eta} GF.\]
Naturality of $\eta$ implies that this is none other than:
\[ GF \mapright{GF\alpha} GFGF\mapright{G\varepsilon F}GF\mapright{\eta GF}GFGF\mapright{GF\alpha^{-1}}GF.\]
Using naturality of $\eta$ again, this becomes the composition:
\[ GF\mapright{GF\alpha}GFGF\mapright{\eta GFGF}GFGFGF\mapright{GFG\varepsilon F}GFGF\mapright{GF\alpha^{-1}} GF. \]
Another application of the naturality of $\eta$ makes this:
\[ GF \mapright{\eta GF}GFGF \mapright{GFGF\alpha}GFGFGF\mapright{GFG\varepsilon F}GFGF\mapright{GF\alpha^{-1}}GF.\]
Using \eqref{eq:alpha}, this becomes:
\[GF \mapright{\eta GF}GFGF \mapright{\alpha GFGF}GFGFGF\mapright{GFG\varepsilon F}GFGF\mapright{\alpha^{-1}GF}GF.\]
Naturality of $\alpha$ means this is just the composition:
\[ GF \mapright{\eta GF}GFGF\mapright{G\varepsilon F}GF\mapright{\alpha GF}GFGF\mapright{\alpha^{-1} GF}GF.\]
Now using \eqref{eq:unit} finally gives us that this is the identity on $GF$.
\end{proof}

\subsubsection{}\label{s:def}
For a variety $X$, we write:
\[ \pi_X\colon X \times \GG_a \to X \]
for the projection map.

\begin{lemma}\label{lem:adj}
For $L\in D(X)$, the unit of adjunction $\eta_L\colon L \to \pi_{X*}\pi_X^*L$ is an isomorphism.
In particular, the functor $\pi_X^*\colon D(X) \to D(X\times \GG_a)$ is full and faithful.
\end{lemma}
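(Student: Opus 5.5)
The plan is to apply Proposition \ref{prop:unitprop} with $F=\pi_X^*$ and $G=\pi_{X*}$. It then suffices to produce \emph{some} natural isomorphism $L\mapright{\sim}\pi_{X*}\pi_X^*L$, without checking that it agrees with the unit. Full faithfulness follows formally: a left adjoint is fully faithful exactly when its unit is an isomorphism.

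To construct an isomorphism, I would write $X\times\GG_a = X\times \AA^1$, so that $\pi_X$ is the base change of $a\colon \AA^1\to \Spec(\CC)$ along $X\to\Spec(\CC)$. Then $\pi_X^*L \cong L\boxtimes \const{\AA^1}$. The Künneth formula for pushforward along a product of morphisms (here $\id_X\times a$) gives
\[ \pi_{X*}(L\boxtimes\const{\AA^1}) \cong L\otimes_{\CC} a_*\const{\AA^1}, \]
naturally in $L$. In the mixed Hodge module setting this Künneth isomorphism is available from Saito's formalism, and for regular holonomic D-modules it is standard. Finally, $a_*\const{\AA^1}=R\Gamma(\AA^1,\CC)=\CC$, concentrated in degree $0$. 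In the Hodge setting this is $\const{pt}$ with its trivial Hodge structure of weight $0$, which holds because $H^0(\AA^1)=\QQ(0)$ and the higher cohomology vanishes. Composing these isomorphisms gives a natural isomorphism $\alpha_L\colon L\mapright{\sim}\pi_{X*}\pi_X^*L$.

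An alternative route avoids Künneth for $*$-pushforward. Since $\pi_X$ is smooth of relative dimension $1$, we have $\pi_X^!=\pi_X^*[2](1)$. By duality, $\pi_{X*}\pi_X^* \cong \DD\,\pi_{X!}\pi_X^!\,\DD\,[-2](-1)$. One then uses the projection formula $\pi_{X!}\pi_X^!M\cong M\otimes\pi_{X!}\pi_X^!\const{X}$ together with $\pi_{X!}\const{X\times\GG_a}=\const{X}[-2](-1)$, computed by compactly supported cohomology of $\AA^1$ and proper base change. This needs only proper base change and the projection formula, both standard.

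The main obstacle is making sure each isomorphism in the chain is natural in $L$, and that in the Hodge setting the twists and weights match exactly, so that the result is an isomorphism $L\cong\pi_{X*}\pi_X^*L$ in $D(X)$ and not merely on underlying D-modules. Once naturality is secured, Proposition \ref{prop:unitprop} removes the need to identify the composite with $\eta_L$.
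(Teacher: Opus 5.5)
Your proposal is correct. Your ``alternative route'' is exactly the paper's proof. Proposition \ref{prop:unitprop} reduces everything to exhibiting some natural isomorphism, and Verdier duality reduces that to $\pi_{X!}\pi_X^!\simeq\id$. This in turn follows from $\pi_X^!\simeq\pi_X^*[2](1)$, the projection formula, and $\pi_{X!}\const{X\times\GG_a}\simeq\const{X}[-2](-1)$.

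There is one bookkeeping slip in that route. The duality identity is $\pi_{X*}\pi_X^*\simeq\DD\,\pi_{X!}\pi_X^!\,\DD$ with no shift or twist, because $\DD\pi_{X!}\simeq\pi_{X*}\DD$ and $\DD\pi_X^!\simeq\pi_X^*\DD$. The $[2](1)$ hidden in $\pi_X^!$ already cancels the $[-2](-1)$ coming from the compactly supported cohomology of $\AA^1$. With your extra $[-2](-1)$, the computation would output $\pi_{X*}\pi_X^*\simeq\id[-2](-1)$.

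Your primary route is genuinely different. It avoids duality and compactly supported cohomology, and instead uses K\"unneth for $*$-pushforward along $\id_X\times a$ together with $a_*\const{\AA^1}\simeq\const{\Spec(\CC)}$. This is valid and arguably more direct, but it needs an input beyond proper base change and the projection formula, which are $!$-statements. For D-modules the $*$-K\"unneth isomorphism is immediate. In the Hodge setting you must either cite Saito's compatibility of $\boxtimes$ with direct images, or build the comparison map by adjunction and check it is an isomorphism on underlying objects. The paper's route needs only proper base change, the projection formula and Verdier duality, the same tools it uses throughout \S\ref{s:expobjects}.
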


\begin{proof}By Proposition \ref{prop:unitprop}, it suffices to construct a canonical isomorphism $\id \to \pi_{X*}\pi_X^*$. Using Verdier duality, it is sufficient to construct a canonical isomorphism $\pi_{X!}\pi_X^! \to \id$. Now $\pi_X^! \simeq \pi_X^*[2](1)$, and using the projection formula reduces us to the isomorphism $\pi_{X!}\const{X\times \GG_a}[2](1) \simeq \const{X}$.
\end{proof}

\begin{prop}\label{prop:counitcharacterization}
An object $K\in D(X\times \GG_a)$ is in the essential image of 
\[ \pi_X^*\colon D(X) \to D(X\times\GG_a) \]
if and only if the counit of adjunction $\varepsilon_{K}\colon \pi_X^*\pi_{X*}K \to K$ is an isomorphism.
\end{prop}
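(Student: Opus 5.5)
One direction is immediate: if $\varepsilon_K$ is an isomorphism, then $K\simeq \pi_X^*(\pi_{X*}K)$ lies in the essential image of $\pi_X^*$. The content is the converse. Suppose $K\simeq \pi_X^*L$ for some $L\in D(X)$. The plan is to reduce the invertibility of the counit at $K$ to the invertibility of the unit at $L$, which Lemma \ref{lem:adj} provides.

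The key input is the triangle identity for the adjunction $\pi_X^*\dashv \pi_{X*}$. For every $L\in D(X)$ the composite
\[ \pi_X^*L \mapright{\pi_X^*\eta_L} \pi_X^*\pi_{X*}\pi_X^*L \mapright{\varepsilon_{\pi_X^*L}} \pi_X^*L \]
is the identity. By Lemma \ref{lem:adj}, $\eta_L$ is an isomorphism, so $\pi_X^*\eta_L$ is an isomorphism as well. It follows that $\varepsilon_{\pi_X^*L}$ is the inverse of $\pi_X^*\eta_L$, and in particular is itself an isomorphism.

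Next, transport this along the given isomorphism $\phi\colon \pi_X^*L\mapright{\sim} K$. Naturality of $\varepsilon$ gives the identity
\[ \varepsilon_K\circ \pi_X^*\pi_{X*}(\phi) = \phi\circ \varepsilon_{\pi_X^*L}. \]
Here $\pi_X^*\pi_{X*}(\phi)$ and $\phi$ are isomorphisms, and $\varepsilon_{\pi_X^*L}$ is an isomorphism by the previous step. Hence $\varepsilon_K$ is an isomorphism.

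There is no serious obstacle. The only point needing care is that the essential image is closed under isomorphism while the counit is only natural, which is exactly what the naturality square handles. All the geometric content has already been placed in Lemma \ref{lem:adj}, via the computation $\pi_{X!}\const{X\times\GG_a}[2](1)\simeq\const{X}$ and Proposition \ref{prop:unitprop}.
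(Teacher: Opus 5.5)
Your proposal is correct and follows the paper's proof: the triangle identity $\varepsilon_{\pi_X^*L}\circ\pi_X^*(\eta_L)=\id$ together with Lemma \ref{lem:adj} makes $\varepsilon_{\pi_X^*L}$ the inverse of the isomorphism $\pi_X^*(\eta_L)$. Your naturality step, which transports this along an isomorphism $\pi_X^*L\simeq K$, spells out a detail that the paper leaves implicit.
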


\begin{proof}
If the counit is an isomorphism, then it is clear that $K$ is in the essential image. For the converse we need to show that $\varepsilon_{\pi_X^*L}\colon \pi_X^*\pi_{X*}\pi_X^*L \to \pi_X^*L$ is an isomorphism, for each $L\in D(X)$. Write $\eta_L\colon L \to \pi_{X*}\pi_X^*L$ for the unit of adjunction. By Lemma \ref{lem:adj}, we have that $\pi_X^*(\eta_L)$ is an isomorphism. On the other hand, the standard relations between the unit and counit imply that $\varepsilon_{\pi_X^*L}$ is a left inverse to $\pi_X^*(\eta_L)$. Consequently, $\varepsilon_{\pi_X^*L}$ must also be right inverse to $\pi_X^*(\eta_L)$.
\end{proof}

\subsubsection{}Recall that a subcategory $\cC$ of a triangulated category $\cT$ is called \emph{thick} if it is triangulated, and it contains all direct summands of its objects.

\begin{prop}\label{prop:thick}
The essential image of $\pi_X^*\colon D(X) \to D(X\times\GG_a)$ is a thick subcategory.
\end{prop}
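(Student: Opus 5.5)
We use the characterization of Proposition \ref{prop:counitcharacterization}: the essential image $\cC$ of $\pi_X^*$ consists exactly of those $K\in D(X\times\GG_a)$ for which the counit $\varepsilon_K\colon \pi_X^*\pi_{X*}K\to K$ is an isomorphism. Both $\pi_X^*$ and $\pi_{X*}$ are triangulated functors, and the counit $\varepsilon$ is a natural transformation between the triangulated functors $\pi_X^*\pi_{X*}$ and $\id$, compatible with shifts. All three properties (shifts, cones, summands) then follow from this description.

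\emph{Closure under shifts and isomorphisms.} The condition ``$\varepsilon_K$ is an isomorphism'' is invariant under replacing $K$ by an isomorphic object, by naturality. It is also invariant under shifting $K$, since $\varepsilon_{K[1]}=\varepsilon_K[1]$ up to the canonical commutation isomorphisms of $\pi_X^*$ and $\pi_{X*}$ with $[1]$. Alternatively, $\pi_X^*(L)[1]\simeq \pi_X^*(L[1])$ shows directly that the image is stable under shifts.

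\emph{Closure under cones.} Let $K_1\to K_2\to K_3\to K_1[1]$ be a distinguished triangle with $K_1,K_2\in\cC$. Applying the triangulated functor $\pi_X^*\pi_{X*}$ gives a distinguished triangle, and $\varepsilon$ yields a morphism of triangles to the original one. The only point needing care is that the square involving the connecting maps commutes. This holds because the counit of an adjunction between triangulated functors is a morphism of triangulated functors: it commutes with the shift isomorphisms. Given that, the five lemma for triangulated categories shows $\varepsilon_{K_3}$ is an isomorphism, so $K_3\in\cC$.

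A more hands-on route avoids this compatibility check. Write $K_i\simeq\pi_X^*L_i$ for $i=1,2$. By Lemma \ref{lem:adj}, $\pi_X^*$ is fully faithful, so the map $K_1\to K_2$ comes from some $f\colon L_1\to L_2$. Then $\pi_X^*(\mathrm{Cone}(f))$ completes $\pi_X^*(f)$ to a distinguished triangle, and is therefore isomorphic to $K_3$. I would present this version.

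\emph{Closure under summands.} Suppose $K\oplus K'\in\cC$. The counit is additive, so $\varepsilon_{K\oplus K'}=\varepsilon_K\oplus\varepsilon_{K'}$. A direct sum of morphisms is an isomorphism only if each summand is one, so $\varepsilon_K$ is an isomorphism and $K\in\cC$ by Proposition \ref{prop:counitcharacterization}.

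I expect no real obstacle here. The only subtle point is the triangle-compatibility of $\varepsilon$ in the cone step, which the full faithfulness argument sidesteps.
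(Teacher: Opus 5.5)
Your proof is correct and follows the paper, whose entire proof is to invoke Proposition \ref{prop:counitcharacterization}; your summand argument via $\varepsilon_{K\oplus K'}=\varepsilon_K\oplus\varepsilon_{K'}$ is exactly what that citation intends. For cones, your preferred route through full faithfulness of $\pi_X^*$ (Lemma \ref{lem:adj}) is a harmless variant: it avoids checking that $\varepsilon$ is compatible with the shift isomorphisms, a point the paper's one-line proof leaves implicit.
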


\begin{proof}
Apply Proposition \ref{prop:counitcharacterization}.
\end{proof}

\subsubsection{}
Define $E(X)$ to be the Verdier localization (see \cite[Theorem 2.1.8]{Neeman}):
\[ E(X) = D(X\times \GG_a)/\pi_X^*D(X). \]
So $E(X)$ is a triangulated category equipped with a localization functor:
\[ D(X\times\GG_a) \to E(X) \]
and characterized by the following universal property: if $D(X\times \GG_a) \to \cT$ is a triangulated functor whose kernel contains the essential image of $\pi_X^*$, then it factors uniquely as:
\[ D(X\times \GG_a) \to E(X) \to \cT. \]
\begin{prop}\label{prop:kernel}
The localization $D(X\times \GG_a) \to E(X)$ sends $K\in D(X\times\GG_a)$ to an object isomorphic to $0$ in $E(X)$ if and only if $K\in \pi_X^*D(X)$.
\end{prop}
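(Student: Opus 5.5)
This is the standard fact that the kernel of a Verdier quotient by a thick subcategory is exactly that subcategory. The plan is to combine Proposition \ref{prop:thick} with the explicit description of the localization by roofs (calculus of fractions), as in \cite[Theorem 2.1.8]{Neeman} and its accompanying discussion.

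The \emph{if} direction is immediate. If $K\simeq\pi_X^*L$, then $K$ lies in the subcategory being quotiented out, so $K\mapsto 0$ in $E(X)$. Concretely, $0\to K$ has cone $K\in\pi_X^*D(X)$, so it becomes invertible. Since $\pi_X^*D(X)$ is only an essential image, I will note that it is closed under isomorphism; this holds by definition of essential image.

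For the \emph{only if} direction, recall that $E(X)$ is obtained by inverting the class $S$ of morphisms $s$ in $D(X\times\GG_a)$ whose cone lies in $\cC:=\pi_X^*D(X)$. Morphisms in $E(X)$ are roofs $K\xleftarrow{s}K'\to K''$ with $s\in S$. Suppose the image of $K$ is zero. Then $\id_K$ becomes the zero map, and two roofs are equal precisely when they are equalized after precomposing with some $s\colon K'\to K$ in $S$. So there is $s\in S$ with $s=\id_K\circ s=0\circ s=0$.

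Now complete $s=0$ to a distinguished triangle $K'\xrightarrow{0}K\to C\to K'[1]$ with $C\in\cC$. A triangle whose first map is zero splits, so $C\simeq K\oplus K'[1]$. Hence $K$ is a direct summand of an object of $\cC$. By Proposition \ref{prop:thick}, $\cC$ is thick, so $K\in\cC$. This also follows directly from Proposition \ref{prop:counitcharacterization}: the counit of a direct sum is the direct sum of the counits, so if the counit is an isomorphism on $C$ it is an isomorphism on each summand.

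The main obstacle is only bookkeeping. One must verify that equality of morphisms in the Verdier quotient is detected by precomposition with elements of $S$, which is the standard right calculus of fractions for $S$. Alternatively, one can simply cite that the kernel of $\cT\to\cT/\cC$ is the thick closure of $\cC$ (\cite[Lemma 2.1.33, Theorem 2.1.8]{Neeman}), which equals $\cC$ by Proposition \ref{prop:thick}.
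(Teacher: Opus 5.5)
Your proof is correct and takes the same route as the paper: the paper invokes Proposition \ref{prop:thick} together with \cite[Remark 2.1.39]{Neeman}, and your roof argument (a zero map $s\in S$ whose cone $K\oplus K'[1]$ exhibits $K$ as a direct summand of an object of $\pi_X^*D(X)$) just unpacks that citation. Your alternative check of thickness via Proposition \ref{prop:counitcharacterization} is also exactly how the paper proves Proposition \ref{prop:thick}.
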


\begin{proof}
The implication that $\pi^*_XD(X)$ gets sent to $0$ is trivial. The other direction is implied by Proposition \ref{prop:thick}. See \cite[Remark 2.1.39]{Neeman}.
\end{proof}
Informally, $E(X)$ is obtained from $D(X\times\GG_a)$ by killing \emph{exactly} all the objects that come from $D(X)$.

\begin{prop}The quotient $Q\colon D(X\times \GG_a)\to E(X)$ admits a full and faithful left adjoint $Q_!$, as well as a full and faithful right adjoint $Q_*$. The inclusion of the kernel of $Q$, i.e., $\pi_X^*\colon D(X) \to D(X\times \GG_a)$, along with its left and right adjoints $\pi_{X!}[2](1)$ and $\pi_{X*}$ is a recollement in the sense of \cite[\S1.4]{BBD}. In particular, for each $K\in D(X\times \GG_a)$ there are canonical distinguished triangles:
\[ Q_!QK \to K \to \pi_X^*\pi_{X!}K[2](1) \to \quad\text{and}\quad \pi_X^*\pi_{X*}K \to K \to Q_*QK\to \]
\end{prop}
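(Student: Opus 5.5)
The plan is to deduce everything from the general theory of Verdier quotients by a thick subcategory that is both left and right admissible. The core observation is that $\pi_X^*\colon D(X)\to D(X\times\GG_a)$ is fully faithful (Lemma \ref{lem:adj}) and has both a right adjoint $\pi_{X*}$ and a left adjoint $\pi_{X!}[2](1)$. The left adjoint is available because $\pi_X^!\simeq\pi_X^*[2](1)$, so $\pi_X^*\simeq \pi_X^![-2](-1)$ is right adjoint to $\pi_{X!}[2](1)$.

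First I will construct the two truncation triangles directly in $D(X\times\GG_a)$. Given $K$, complete the counit $\varepsilon_K\colon\pi_X^*\pi_{X*}K\to K$ to a triangle $\pi_X^*\pi_{X*}K\to K\to C\to$. Applying $\pi_{X*}$ and using that the unit $\id\to\pi_{X*}\pi_X^*$ is an isomorphism (Lemma \ref{lem:adj}), the triangle identities show that $\pi_{X*}(\varepsilon_K)$ is an isomorphism. Hence $\pi_{X*}C=0$. By adjunction this gives $\Hom(\pi_X^*L,C)=0$ for all $L$, so $C$ lies in the right orthogonal $(\pi_X^*D(X))^\perp$.

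Dually, set $\pi_\flat=\pi_{X!}[2](1)$. Its unit $\id\to\pi_X^*\pi_\flat$, composed with the counit $\pi_\flat\pi_X^*\to\id$, is handled as follows. The counit is an isomorphism: it is the Verdier dual of the unit for $\pi_X^*\dashv\pi_{X*}$, up to the identification $\pi_X^!=\pi_X^*[2](1)$. Cleanly, Proposition \ref{prop:unitprop} applied to the dual pair gives this once we have any isomorphism $\pi_\flat\pi_X^*\simeq\id$, and that follows from the projection formula exactly as in Lemma \ref{lem:adj}. The cocone $C'$ of $K\to\pi_X^*\pi_\flat K$ then lies in the left orthogonal ${}^\perp(\pi_X^*D(X))$.

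Next I will use the standard facts on Verdier localization (\cite[\S9]{Neeman} or Bousfield localization) to identify $Q$ restricted to $(\pi_X^*D(X))^\perp$ as an equivalence onto $E(X)$. For $C$ in the right orthogonal and any $K$, every morphism $QK\to QC$ in $E(X)$ is represented by a roof $K\leftarrow K'\to C$ whose cone lies in $\pi_X^*D(X)$. Since maps from $\pi_X^*D(X)$ to $C$ vanish, $\Hom(K',C)=\Hom(K,C)$, so $\Hom_{E}(QK,QC)=\Hom_D(K,C)$. Essential surjectivity follows from the first triangle, since $QK\simeq QC$. Define $Q_*$ as the inverse of this equivalence followed by inclusion. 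Then $\Hom_E(QK,Y)=\Hom_D(K,Q_*Y)$, so $Q_*$ is a fully faithful right adjoint with $Q_*QK\simeq C$. The symmetric argument with $C'$ gives $Q_!$ with $Q_!QK\simeq C'$. These produce the two displayed triangles, with canonicity coming from uniqueness of the truncation triangles for a semi-orthogonal decomposition.

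Finally, the recollement axioms of \cite[\S1.4]{BBD} follow:
\begin{itemize}
\item $\pi_X^*$ and $Q_!,Q_*$ are fully faithful;
\item $Q\pi_X^*=0$ by Proposition \ref{prop:kernel};
\item the adjunctions hold by construction;
\item the triangles hold as above.
\end{itemize}
Since orthogonality gives $\pi_{X*}Q_*=0$ and $\pi_\flat Q_!=0$, all the remaining conditions are met. The only genuinely delicate point is the verification that the counit of $\pi_\flat\dashv\pi_X^*$ is an isomorphism, that is, the compatibility of shifts and twists. Proposition \ref{prop:unitprop} lets me avoid checking that any particular morphism matches.
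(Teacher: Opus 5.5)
Your proposal is correct and takes the same route as the paper. The paper simply cites the general fact that a thick subcategory whose inclusion has both a left and a right adjoint yields fully faithful adjoints $Q_!$, $Q_*$ and the recollement triangles; you unpack that fact by hand via the left and right orthogonals of $\pi_X^*D(X)$. The step you flag as delicate is in fact automatic: $\pi_X^*$ is fully faithful by Lemma~\ref{lem:adj}, so the counit of $\pi_{X!}[2](1)\dashv\pi_X^*$ is necessarily an isomorphism, and Proposition~\ref{prop:unitprop} is not needed there.
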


\begin{proof}
The functor $\pi_X^*$ is full and faithful (Lemma \ref{lem:adj}) with essential image the thick subcategory $\pi_X^*D(X)$ (Proposition \ref{prop:thick}). A thick subcategory whose inclusion has both left and right adjoints induces, on the Verdier quotient, full and faithful left and right adjoints to the quotient functor, as well as the requisite triangles \cite[Chapter 9]{Neeman}, \cite[\S1.4]{BBD}.
\end{proof}

\begin{cor}The functors $Q_!$ and $Q_*$ identify $E(X)$ with the full subcategories with objects consisting of:
\[\{ K  \mid \pi_{X!}K = 0\} \quad\text{and}\quad\{K  \mid \pi_{X*}K = 0\}, \]
respectively.
\end{cor}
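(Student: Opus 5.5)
Since $Q_!$ and $Q_*$ are full and faithful (previous proposition), each identifies $E(X)$ with its essential image in $D(X\times\GG_a)$. It therefore suffices to show that the essential image of $Q_!$ is $\{K\mid \pi_{X!}K=0\}$, and that of $Q_*$ is $\{K\mid \pi_{X*}K=0\}$. I will treat $Q_!$; the case of $Q_*$ is the same argument with the second triangle of the recollement in place of the first, and with $\pi_{X*}$ replacing $\pi_{X!}[2](1)$.

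First, the image of $Q_!$ lies in the kernel of $\pi_{X!}$. For $K\in E(X)$ and $L\in D(X)$ I compute
\[
\Hom(\pi_{X!}Q_!K[2](1), L) \simeq \Hom(Q_!K,\pi_X^*L)\simeq \Hom(K, Q\pi_X^*L)=0 ,
\]
using the adjunctions $\pi_{X!}[2](1)\dashv\pi_X^*$ and $Q_!\dashv Q$, together with $Q\pi_X^*=0$ from Proposition \ref{prop:kernel}. Taking $L=\pi_{X!}Q_!K[2](1)$ and applying Yoneda gives $\pi_{X!}Q_!K=0$. (Equivalently, the composite $\pi_{X!}Q_!$ vanishes by the recollement axioms of \cite[\S1.4]{BBD}.)

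Conversely, suppose $\pi_{X!}K=0$. The canonical triangle
\[
Q_!QK\to K\to \pi_X^*\pi_{X!}K[2](1)\to
\]
has vanishing third term, so $K\simeq Q_!QK$, and $K$ lies in the essential image of $Q_!$. For $Q_*$, the triangle $\pi_X^*\pi_{X*}K\to K\to Q_*QK\to$ gives $K\simeq Q_*QK$ whenever $\pi_{X*}K=0$. Vanishing in the other direction uses the analogous adjunction computation $\Hom(\pi_X^*L, Q_*K)\simeq\Hom(Q\pi_X^*L,K)=0$ together with $\pi_X^*\dashv\pi_{X*}$.

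There is no real obstacle here. The only point that needs care is to use the triangles exactly as given by the recollement, with the correct twist $[2](1)$ on $\pi_{X!}$; this twist does not affect vanishing.
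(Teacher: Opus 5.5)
Your proposal is correct and is essentially the paper's intended argument: the paper states this corollary without separate proof, as an immediate consequence of the recollement in the preceding proposition. That is exactly what you use, namely full faithfulness of $Q_!$ and $Q_*$, the vanishing of $\pi_{X!}Q_!$ and $\pi_{X*}Q_*$ via adjunction, and the two canonical triangles for the converse inclusions.
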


\subsubsection{}\label{s:emsheart}
The Corollary shows that the category of exponential mixed Hodge structures considered in \cite[Chapter 4]{KS} is none other than the abelian heart of our $E(\Spec(\CC))$ (see \S\ref{s:tstructures}).

\subsubsection{}
Given a morphism $f\colon X\to Y$, the functor $(f\times \id_{\GG_a})^*$ induces a functor:
\[ \bm{f^*}\colon E(Y) \to E(X).\]
Proper base change implies that $(f\times \id_{\GG_a})_!$ induces a functor:
\[ \bm{f_!}\colon E(X) \to E(Y).\]

\subsubsection{}\label{s:verdier}
Using $(f\times\id_{\GG_a})_*$ and $(f\times\id_{\GG_a})^!$ one also obtains functors $\bm{f_*}$ and $\bm{f^!}$ on the categories $E(X)$, since $\pi_X$ is smooth. Verdier duality is likewise available. 

\subsubsection{}Write
\[ \text{sum}\colon \GG_a \times \GG_a \to \GG_a \]
for the group operation.
Consider the diagram:
\[ \xymatrix{
& X\times \GG_a \times \GG_a \ar[ld]_{p_{12}}\ar[rd]^{p_{13}}\ar[rr]^{\id_X \times \text{sum}} && X \times \GG_a \\
X \times \GG_a & & X \times \GG_a
}\]
where $p_{12}$ and $p_{13}$ are projections onto the indicated factors.
For $K,L\in D(X\times \GG_a)$, define $K\plusotimes L\in D(X\times\GG_a)$ by:
\[ K \plusotimes L = (\id_X\times\text{sum})_!(p_{12}^*K\otimes p_{13}^*L). \]
This defines a symmetric monoidal structure on $D(X\times \GG_a)$ with unit object given by 
\[ \11_X = (\id_X \times i_0)_!\const{X},\]
where $i_0\colon \Spec(\CC) \to \GG_a$ is the unit for the group operation. Proper base change yields that this induces a symmetric monoidal structure on $E(X)$. 

\subsubsection{}Classical proper base change implies $\bm{f^*}\colon E(Y) \to E(X)$ is $\plusotimes$-monoidal and that $\bm{f_!}\colon E(X) \to E(Y)$ satisfies proper base change. The projection formula with respect to $\plusotimes$ also holds in $E(X)$: this uses both classical proper base change and the classical projection formula. The standard tensor product $\otimes$ does \emph{not} descend to these categories. In short, the usual formalism of Grothendieck's operations holds on the categories $E(X)$, provided one replaces $\otimes$ with $\plusotimes$.

\subsubsection{}The quotient property of $E(X)$ has not been used so far. It will enter in a crucial way in Proposition \ref{orthogonality}. Further, we will not need this and provide no proof, but convolution could equally have been defined using $\text{sum}_*$ instead of $\text{sum}_!$. Although this $\ast$-convolution differs from our $!$-convolution on $D(X\times \GG_a)$, the two become canonically isomorphic in $E(X)$.\footnote{The cone of the forget supports map from $!$-convolution to $\ast$-convolution comes from $D(X)$. See \cite[\S4.4, Lemma 1]{KS} for an argument in the case $X=\Spec(\CC)$.}

\subsection{The exponential kernel}\label{s:expobject}
\subsubsection{}
Let $\Delta\colon \GG_a \to \GG_a\times \GG_a$ be the diagonal morphism. Define $\EE\in E(\GG_a)$ to be the class of:
\[ \EE = \Delta_!\const{\GG_a},\]
where $\Delta_!\colon D(\GG_a) \to D(\GG_a\times \GG_a)$ is the classical pushforward (as opposed to its counterpart on $E(\GG_a)$).
Given morphisms $f,g\colon X \to \GG_a$, set $f+g = \text{sum}\circ (f\times g)$. 

\begin{prop}[Additivity]\label{additivity}
In $E(X)$, we have:
\[ \bm{f^*}\EE \plusotimes \bm{g^*}\EE \simeq \bm{(f+g)^*}\EE.\]
Here $f^*, g^*, (f+g)^*$ are the induced functors $E(\GG_a) \to E(X)$, induced by $(f\times \id_{\GG_a})^*\colon D(\GG_a\times \GG_a) \to D(X\times \GG_a)$, etc.
\end{prop}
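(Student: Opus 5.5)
The plan is to prove the isomorphism already in $D(X\times\GG_a)$, before passing to the quotient, by identifying each side with the extension by zero of a constant sheaf along a graph. For a morphism $h\colon X\to\GG_a$ write $\Gamma_h=(\id_X,h)\colon X\to X\times\GG_a$ for its graph, a closed immersion.

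\textbf{Step 1: pullbacks of $\EE$ are graph sheaves.} I first show $(h\times\id_{\GG_a})^*\Delta_!\const{\GG_a}\simeq\Gamma_{h!}\const{X}$. The square with top arrow $\Gamma_h\colon X\to X\times\GG_a$, bottom arrow $\Delta\colon\GG_a\to\GG_a\times\GG_a$, left vertical arrow $h$ and right vertical arrow $h\times\id_{\GG_a}$ is cartesian: a point $(x,t)$ satisfies $h(x)=t$ exactly when it lies on the graph. Proper base change then gives the claim, since $h^*\const{\GG_a}=\const{X}$. Hence $\bm{f^*}\EE$, $\bm{g^*}\EE$ and $\bm{(f+g)^*}\EE$ are the classes of $\Gamma_{f!}\const X$, $\Gamma_{g!}\const X$ and $\Gamma_{f+g!}\const X$.

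\textbf{Step 2: the external product.} Let $j=(\id_X,f,g)\colon X\to X\times\GG_a\times\GG_a$, also a closed immersion. Applying proper base change to $p_{12}$, the object $p_{12}^*\Gamma_{f!}\const X$ is the extension by zero of the constant sheaf on the closed subvariety $Z_f=\{(x,s,t):s=f(x)\}\cong X\times\GG_a$; similarly $p_{13}^*\Gamma_{g!}\const X$ lives on $Z_g=\{t=g(x)\}$. The tensor product of two such extensions by zero is the extension by zero of the constant sheaf on $Z_f\times_{X\times\GG_a\times\GG_a}Z_g$, which is the image of $j$. This follows from the projection formula for the closed immersion $Z_f\hookrightarrow X\times\GG_a\times\GG_a$ together with base change of $Z_g$ to $Z_f$. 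Therefore $p_{12}^*\Gamma_{f!}\const X\otimes p_{13}^*\Gamma_{g!}\const X\simeq j_!\const X$.

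\textbf{Step 3: pushing forward along $\id_X\times\text{sum}$.} Since $(\id_X\times\text{sum})\circ j=\Gamma_{f+g}$, functoriality of $(-)_!$ gives $\bm{f^*}\EE\plusotimes\bm{g^*}\EE\simeq\Gamma_{f+g!}\const X$ in $D(X\times\GG_a)$. By Step 1 this is a representative of $\bm{(f+g)^*}\EE$, and applying $Q$ yields the isomorphism in $E(X)$.

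The quotient property is not needed at all here. The only delicate point is Step 2, specifically the identification of the intersection $Z_f\cap Z_g$ as a scheme, i.e. checking that it is reduced and equal to $j(X)$. This is clear because $Z_f\cap Z_g$ is itself the graph of the morphism $(f,g)\colon X\to\GG_a^2$, and graphs are closed subschemes isomorphic to $X$.
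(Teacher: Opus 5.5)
Your proof is correct and is essentially the paper's argument. Working in $D(X\times\GG_a)$, the paper identifies $p_{12}^*$ of the first factor with $\tilde\gamma_{f!}\const{X\times\GG_a}$, which is your sheaf on $Z_f$. It then uses the projection formula for $\tilde\gamma_f$ together with $p_{13}\circ\tilde\gamma_f=\id$ to reduce the tensor product to $\tilde\gamma_{f!}\gamma_{g!}\const{X}$, your $j_!\const{X}$. It concludes because $(\id_X\times\text{sum})\circ\tilde\gamma_f\circ\gamma_g$ is the graph of $f+g$.

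Your worry about reducedness in Step 2 is unnecessary, since the projection-formula computation never needs to form $Z_f\cap Z_g$ as a scheme.
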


\begin{proof}
Work in the categories $D(-)$.
Let $\gamma_g\colon X \to X\times \GG_a$ be the graph $\gamma_g(x) = (x, g(x))$ and let:
\[ \tilde\gamma_f\colon X\times \GG_a \to X\times \GG_a\times \GG_a \]
be given by $\tilde\gamma_f(x,t) = (x, f(x), t)$. Then by classical proper base change:
\[ (f\times \id_{\GG_a})^*\Delta_!\const{\GG_a} \plusotimes (g\times \id_{\GG_a})^*\Delta_!\const{\GG_a} \simeq (\id_X\times\text{sum})_!(\tilde\gamma_{f!}\const{X\times \GG_a} \otimes p_{13}^*\gamma_{g!}\const{X}).\]
By the classical projection formula this is isomorphic to:
\[ (\id_X\times\text{sum})_!\tilde\gamma_{f!}\tilde\gamma_f^*p_{13}^*\gamma_{g!}\const{X}.\]
But $p_{13}\circ \tilde \gamma_f = \id_{X\times \GG_a}$, so this becomes:
\[ (\id_X\times\text{sum})_!\tilde\gamma_{f!}\gamma_{g!}\const{X}.\]
Now $(\id_X\times\text{sum})\circ\tilde\gamma_f\circ\gamma_{g}$ is the graph of $f+g$. So applying proper base change to $((f+g)\times \id_{\GG_a})^*\Delta_!\const{\GG_a}$ gives the desired result.
\end{proof}

\begin{cor}\label{einvertible}
In $E(\GG_a)$, we have:
\[ \EE \plusotimes [-1]^*\EE \simeq \11_{\GG_a}, \]
where $[-1]\colon \GG_a \to \GG_a$ is the negation map. In particular, $\EE$ is monoidally invertible with respect to $\plusotimes$. Note: the functor $[-1]^*$ in the formula above is on $E(\GG_a)$.\end{cor}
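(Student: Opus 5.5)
The plan is to apply Proposition \ref{additivity} with $X=\GG_a$, $f=\id_{\GG_a}$ and $g=[-1]$. Then $f+g=\text{sum}\circ(\id\times[-1])\circ\Delta$ is the zero map $0\colon \GG_a\to\GG_a$, which factors as $\GG_a\xrightarrow{a}\Spec(\CC)\xrightarrow{i_0}\GG_a$, where $a$ is the structure map. Since $\bm{f^*}$ for $f=\id$ is the identity and $\bm{g^*}=[-1]^*$, additivity gives
\[ \EE\plusotimes[-1]^*\EE\simeq \bm{0^*}\EE. \]
So the whole claim reduces to identifying $\bm{0^*}\EE$ with $\11_{\GG_a}$ in $E(\GG_a)$.

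For this I will compute in $D(\GG_a\times\GG_a)$. The functor $(0\times\id_{\GG_a})^*$ applied to $\Delta_!\const{\GG_a}$ is handled by proper base change along the cartesian square whose top row is the graph of $0$. The fibre product of $0\times\id\colon \GG_a\times\GG_a\to\GG_a\times\GG_a$ with $\Delta$ is $\{(x,t): t=0\}\simeq\GG_a$, embedded by $x\mapsto(x,0)$, i.e. by $\id_{\GG_a}\times i_0$. This yields
\[ (0\times\id)^*\Delta_!\const{\GG_a}\simeq(\id_{\GG_a}\times i_0)_!\const{\GG_a}=\11_{\GG_a}, \]
already before passing to the quotient. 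An equivalent route is to note that $\bm{0^*}=\bm{a^*}\bm{i_0^*}$ and that $\bm{i_0^*}\EE$ is the class of $i_{0!}\const{\Spec\CC}=\11_{\Spec\CC}$, while $\bm{a^*}$ is monoidal and hence preserves units.

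Finally, invertibility is immediate from this isomorphism and the symmetry of $\plusotimes$: $[-1]^*\EE$ is a two-sided inverse of $\EE$.

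There is no real obstacle. The only point needing care is the base change computation, namely checking that the fibre product is indeed the reduced scheme $\GG_a\times\{0\}$; this holds because the equations are $t=0$, with no nilpotent structure. The functoriality $(f\circ h)^*\simeq h^*f^*$ descends to $E$ compatibly, which is clear since these functors are induced from $D(-)$.
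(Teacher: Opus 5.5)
Your proposal is correct and is the same argument as the paper's. Additivity with $f=\id_{\GG_a}$ and $g=[-1]$ reduces the claim to $[0]^*\EE\simeq\11_{\GG_a}$, and this is verified by proper base change on $\Delta_!\const{\GG_a}$ in $D(\GG_a\times\GG_a)$, exactly as you describe.
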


\begin{proof}
By additivity, we are reduced to showing $[0]^*\EE \simeq \11_{\GG_a}$, where $[0]\colon \GG_a \to \GG_a$ maps everything to the group unit. This is verified in the category $D(\GG_a\times \GG_a)$ using the definition of $\EE$ and proper base change.
\end{proof}

\subsubsection{}
Let $\pi\colon V\to S$ be a vector bundle of constant rank $r\geq 1$. Let $V^{\vee}\to S$ be the dual bundle, and $m\colon V\times_S V^{\vee} \to \GG_a$ the canonical pairing. Write $q\colon V\times_S V^{\vee}\to V^{\vee}$ for the projection.
The following result is the key step in the invertibility of the Fourier transform (Theorem \ref{fourthm}). The quotient property of $E(X)$ is essential: the statement fails in $D(V^{\vee}\times \GG_a)$. 

\begin{prop}[Orthogonality]\label{orthogonality}
In $E(V^{\vee})$, we have:
\[ \bm{q_!m^*}\EE \simeq \bm{s_!}\11_S[-2r](-r), \]
where $s\colon S \to V^{\vee}$ is the zero section.
Here all the functors are on the categories $E(X)$. I.e., $m^*\colon E(\GG_a) \to E(V\times_S V^{\vee})$ is the functor induced by $(m\times \id_{\GG_a})^*\colon D(\GG_a\times \GG_a) \to D(V\times_S V^{\vee}\times \GG_a)$, etc.
\end{prop}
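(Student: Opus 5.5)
Work in $D(-)$ with explicit representatives and pass to $E(V^\vee)$ only at the end. By definition $\EE$ is the class of $\Delta_!\const{\GG_a}$. Proper base change along $m\times\id_{\GG_a}$ identifies $(m\times\id)^*\Delta_!\const{\GG_a}$ with $\gamma_{m!}\const{V\times_S V^\vee}$, where $\gamma_m\colon V\times_S V^\vee\to V\times_S V^\vee\times\GG_a$ is the graph of $m$. Applying $(q\times\id)_!$, the object $\bm{q_!m^*}\EE$ is represented by
\[ K:=(q\times \id_{\GG_a})_!\gamma_{m!}\const{V\times_S V^\vee} = \phi_!\const{V\times_S V^\vee}, \qquad \phi=(q,m)\colon V\times_S V^\vee\to V^\vee\times\GG_a. \]
On the other side, $\bm{s_!}\11_S$ is represented by $(s\times\id)_!(\id_S\times i_0)_!\const{S}$, i.e.\ the constant sheaf on the point $(s(S),0)$ of $V^\vee\times \GG_a$.

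The computation is fibrewise over $V^\vee$. Over a point $\xi\in V^\vee$, the map $\phi$ restricts to the linear form $v\mapsto\langle v,\xi\rangle$ on the fibre $V_{\pi(\xi)}\simeq\AA^r$.
\begin{itemize}
\item If $\xi\neq0$, this form is a surjective linear map with fibres $\AA^{r-1}$.
\item If $\xi=0$, it is the constant map to $0$.
\end{itemize}
To make this global, let $U=V^\vee\smallsetminus s(S)$ and $j\colon U\to V^\vee$. Use the triangle
\[ (j\times\id)_!(j\times\id)^*K\to K\to (s\times\id)_*(s\times\id)^*K\to. \]
By proper base change, $(s\times\id)^*K$ is the pushforward of $\const{V}$ along $V\to S\to S\times\GG_a$, the second map being $\id_S\times i_0$. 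Since $\pi_!\const{V}\simeq\const{S}[-2r](-r)$, this term is exactly the representative of $\bm{s_!}\11_S[-2r](-r)$.

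It therefore suffices to show that $K_U:=(j\times\id)^*K$ lies in $\pi_U^*D(U)$. Then its extension by zero $(j\times\id)_!K_U=(j\times\id)_!\pi_U^*(-)=\pi_{V^\vee}^*j_!(-)$ also lies in $\pi_{V^\vee}^*D(V^\vee)$ and vanishes in $E(V^\vee)$ by Proposition \ref{prop:kernel}. The triangle then gives the claimed isomorphism in $E(V^\vee)$. This is where the quotient property is used: $K_U$ is nonzero in $D$.

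The main step is showing $K_U\in\pi_U^*D(U)$, i.e.\ that it comes from the base. Over $U$, the map $\phi_U\colon V\times_S U\to U\times\GG_a$ is smooth and surjective, and it is in fact a Zariski-locally trivial affine bundle of rank $r-1$. To see this, work locally on $S$ with trivialized $V$. Over the open set $\xi_i\neq0$, one can solve for the coordinate $v_i$ in terms of $t=m(v,\xi)$ and the other coordinates, so $V\times_S U\simeq (U\times\GG_a)\times\AA^{r-1}$ compatibly with $\phi$. Hence $\phi_{U!}\const{}\simeq\const{U\times\GG_a}[-2(r-1)](-(r-1))$ locally. To get this globally and canonically, I would note that $\phi_U$ is a torsor under the vector bundle $\ker(m)$, which has rank $r-1$ over $U$, pulled back to $U\times\GG_a$. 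For a torsor $P\to Y$ under a vector bundle of rank $d$, the trace gives $\const{Y}\simeq(\text{trace})\,\phi_!\const{P}[2d](d)$; more generally $\phi_!\phi^!\to\id$ is an isomorphism, by the same projection-formula argument as Lemma \ref{lem:adj}, checked locally. This yields $K_U\simeq\const{U\times\GG_a}[-2r+2](-r+1)=\pi_U^*\const{U}[-2r+2](-r+1)$, as needed. If the torsor argument is awkward in the Hodge setting, I would instead use Proposition \ref{prop:counitcharacterization}: check that the counit $\pi_U^*\pi_{U*}K_U\to K_U$ is an isomorphism, which is a local question on $U$ and reduces to the explicit product decomposition above.
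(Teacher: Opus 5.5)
Your proposal is correct and follows essentially the same route as the paper. Both use the representative $(q\times\id_{\GG_a})_!(m\times\id_{\GG_a})^*\Delta_!\const{\GG_a}$, the open--closed triangle along the zero section, proper base change to identify the closed part with $(\id_S\times i_0)_!\const{S}[-2r](-r)$, and the Zariski locally trivial $\AA^{r-1}$-bundle $(x,y)\mapsto(y,m(x,y))$ over the complement, whose extension by zero dies in $E(V^{\vee})$. Your extra step of globalizing the local triviality, via the counit $\phi_!\phi^!\to\id$ or Proposition \ref{prop:counitcharacterization}, spells out a point the paper states in one line.
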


\begin{proof}For the proof, we work in the categories $D(-)$ and the classical functors on them.
Consider the object 
\[ K=(q\times \id_{\GG_a})_!(m\times\id_{\GG_a})^*\Delta_!\const{\GG_a} \]
in $D(V^{\vee}\times \GG_a)$, and the distinguished triangle:
\[ j_!j^*K \to K \to i_*i^*K \to \]
where $i= s \times \id_{\GG_a}$ and $j$ is the complementary open immersion.
We will show that 
\[ j^*K \simeq \const{(V^{\vee}- S)\times\GG_a}[-2(r-1)](-(r-1)) \]
in $D((V^{\vee}- S)\times \GG_a)$, and that 
\[ i^*K \simeq (\id_S\times i_0)_!\const{S}[-2r](-r),\]
where $i_0\colon\Spec(\CC) \to \GG_a$ is the group unit. This will give us the result, as the \emph{class} of
$j_!\const{(V^{\vee}- S)\times \GG_a}$
in $E(V^{\vee}) = D(V^{\vee}\times\GG_a)/\pi_{V^{\vee}}^*D(V^{\vee})$ is zero.

For $j^*K$, we have a cartesian square:
\[ \xymatrix{
V \times_S (V^{\vee}- S) \times \GG_a \ar[d]^{p}\ar[r]^{\tilde j} & V\times_S V^{\vee}\times\GG_a\ar[d]^{q\times\id_{\GG_a}} \\
(V^{\vee}- S)\times\GG_a \ar[r]^{j} & V^{\vee}\times \GG_a
}\]
where $\tilde j$ is the evident inclusion and $p(x,y,z) = (y,z)$.
So by proper base change:
\[ j^*K \simeq p_!((m\times \id_{\GG_a}) \circ \tilde j)^*\Delta_!\const{\GG_a}.\]
Now apply proper base change again with the cartesian squares:
\[ \xymatrix{
V\times_S (V^{\vee}- S)\ar[r]^{j'}\ar[d]^{\tilde\gamma_{m}}&V\times_S V^{\vee}\ar[r]^{m}\ar[d]^{\gamma_{m}} & \GG_a\ar[d]^{\Delta} \\
V\times_S (V^{\vee}- S)\times\GG_a \ar[r]^{\tilde j} & V\times_S V^{\vee}\times\GG_a\ar[r]^-{m\times\id_{\GG_a}} & \GG_a\times\GG_a 
}\]
where $\gamma_{m}$ is the graph of $m$, the map $\tilde\gamma_{m}$ is its restriction to $V\times_S (V^{\vee}- S)$, and $j'$ is the evident inclusion. This gives
\[ j^*K\simeq p_!\tilde\gamma_{m!}\const{V\times_S (V^{\vee}- S)}.\]
The map 
\[ p\circ \tilde\gamma_{m}\colon V\times_S (V^{\vee}- S) \to (V^{\vee}- S)\times \GG_a \]
is given by $(x,y)\mapsto (y,m(x,y))$.
This is a Zariski locally trivial $\AA^{r-1}$-bundle.\footnote{Instead of the bundle argument at this step, one may also use the usual `algebraic homotopy lemma' here (see \cite[Proposition 1]{So} or \cite[Lemme 5.5]{L}).} Hence, 
\[ p_!\tilde\gamma_{m!} \const{V\times_S (V^{\vee}- S)} \simeq \const{(V^{\vee}- S)\times \GG_a}[-2(r-1)](-(r-1)).\]

For $i^*K$, we have a cartesian square:
\[ \xymatrix{
V\times \GG_a \ar[d]^{\pi\times \id_{\GG_a}} \ar[r]^{\tilde i} & V\times_S V^{\vee}\times \GG_a\ar[d]^{q\times\id_{\GG_a}} \\
S\times \GG_a \ar[r]^i & V^{\vee}\times\GG_a
}\]
where $\tilde i (x,z) = (x,s\pi(x),z)$. Hence, by proper base change:
\[ i^*K \simeq (\pi\times\id_{\GG_a})_!((m\times\id_{\GG_a})\circ \tilde i)^*\Delta_!\const{\GG_a}. \]
Now apply proper base change again with the cartesian squares:
\[ \xymatrix{
V \ar[r]^{i'}\ar[d]^{\id_V\times i_0} & V\times_S V^{\vee} \ar[d]_{\gamma_{m}}\ar[r]^{m} & \GG_a\ar[d]^{\Delta} \\
V\times \GG_a \ar[r]^{\tilde i} & V\times_S V^{\vee}\times \GG_a \ar[r]^-{m\times \id_{\GG_a}} & \GG_a\times \GG_a
}\]
where $i'(x) = (x,s\pi(x))$, and $i_0\colon \Spec(\CC) \to \GG_a$ is the group unit. This gives
\[ (\pi\times\id_{\GG_a})_!((m\times\id_{\GG_a})\circ \tilde i)^*\Delta_!\const{\GG_a} \simeq ((\pi\times\id_{\GG_a})\circ (\id_V \times i_0))_!\const{V}.\]
But $\pi\colon V\to S$ is a rank $r$ vector bundle, so
\[ i^*K \simeq (\id_S \times i_0)_!\const{S}[-2r](-r). \qedhere\]
\end{proof}

\subsection{Fourier transform}\label{s:fourier}
\subsubsection{}\label{s:fourierdia}
Recall the vector bundle $\pi\colon V\to S$ of constant rank $r\geq 1$, its dual $V^{\vee}\to S$, and the pairing
\[ m\colon V\times_S V^{\vee} \to \GG_a. \]
So we have a diagram:
\[ \xymatrix{
& V\times_S V^{\vee} \ar[ld]_p\ar[rd]^q\ar[rr]^m& & \GG_a \\
V& & V^{\vee}
}\]
where $p$ and $q$ are the evident projections.
\subsubsection{}Define $\FT_V\colon E(V) \to E(V^{\vee})$ by:
\[ \FT_V(K) = \bm{q_!}(\bm{p^*}K \plusotimes \bm{m^*}\EE)[r] .\]
Here, again, the functors in the formula are functors on the categories $E(-)$ (i.e., those induced by $p\times\id_{\GG_a}$, etc.).

\begin{thm}\label{fourthm}Let $a\colon V\mapright{\sim}V^{\vee\vee}$ be the isomorphism defined by $a(v) = -m(v, -)$. Then we have a canonical isomorphism in $E(V^{\vee\vee})$:
\[\FT_{V^{\vee}}\circ\FT_{V}(K) \simeq \bm{a_!}K(-r).\]
\end{thm}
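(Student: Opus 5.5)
The argument is the standard one for Fourier inversion, run entirely inside the categories $E(-)$ with their $\plusotimes$-formalism (proper base change, projection formula, monoidality of $\bm{f^*}$). The only non-formal input is Orthogonality (Proposition \ref{orthogonality}).

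\textbf{Step 1: rewrite the double transform as an integral transform.} Form $V\times_S V^{\vee}\times_S V^{\vee\vee}$ with projections $p_1,p_2,p_3$ and the pairings $m_{12}\colon (v,\xi)\mapsto m(v,\xi)$ and $m_{23}\colon(\xi,w)\mapsto w(\xi)$. By proper base change in $E(-)$ for the cartesian square formed by $V\times_S V^{\vee}\to V^{\vee}\leftarrow V^{\vee}\times_S V^{\vee\vee}$, together with the projection formula for $\plusotimes$ and monoidality of pullback, I rewrite $\FT_{V^{\vee}}\FT_V(K)$ as
\[ \bm{p_{3!}}\bigl(\bm{p_1^*}K\plusotimes \bm{m_{12}^*}\EE\plusotimes\bm{m_{23}^*}\EE\bigr)[2r]. \]
Additivity (Proposition \ref{additivity}) merges the two kernels into $\bm{(m_{12}+m_{23})^*}\EE$. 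Next I change coordinates on $V\times_S V^{\vee\vee}$ by $(v,w)\mapsto (v, u=w+a(v))$, i.e.\ $w=u-a(v)$. Then $m_{12}+m_{23}=u(\xi)-a(v)(\xi)+m(v,\xi)=u(\xi)$, since $a(v)(\xi)=-m(v,\xi)$. So the phase becomes the pairing $\tilde m\colon V^{\vee}\times_S V^{\vee\vee}\to\GG_a$, $(\xi,u)\mapsto u(\xi)$, pulled back along the projection $\mathrm{pr}_{23}$ that forgets $v$.

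\textbf{Step 2: apply Orthogonality.} Factor $p_3$ as integration over $\xi$ followed by the map $(v,u)\mapsto u-a(v)$. The projection formula together with base change along $V\times_S V^{\vee\vee}\to V^{\vee\vee}$ turn the $\xi$-integration into
\[ \bm{\mathrm{pr}^*}\bigl(\bm{q'_!}\tilde m^*\EE\bigr), \qquad q'\colon V^{\vee}\times_S V^{\vee\vee}\to V^{\vee\vee}. \]
Proposition \ref{orthogonality}, applied to the bundle $V^{\vee}$ with dual $V^{\vee\vee}$, gives $\bm{q'_!}\tilde m^*\EE\simeq \bm{s_!}\11_S[-2r](-r)$. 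Pulling back, this is $\bm{\sigma_!}\11_V[-2r](-r)$, where $\sigma\colon V\to V\times_S V^{\vee\vee}$, $v\mapsto (v,0)$ is the locus $u=0$, i.e.\ $w=-a(v)$. Here I use the same recipe as in the proof of Corollary \ref{einvertible}: $\11$ pulls back to $\11$.

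\textbf{Step 3: evaluate and identify the sign.} Plugging in and using the projection formula, $\bm{p_1^*}K\plusotimes \bm{\sigma_!}\11$ restricts $K$ to the graph, and pushing forward yields $\bm{(-a)_!}K[2r-2r](-r)=\bm{(-a)_!}K(-r)$. To land on the stated $\bm{a_!}K(-r)$ I will need to recheck the sign conventions: whether the kernel of $\FT_{V^{\vee}}$ uses $m_{23}(\xi,w)=w(\xi)$, and whether the change of variables should be $w=a(v)+u$ with orthogonality centred at $u=0$. The sign must come out matching the definition $a(v)=-m(v,-)$, so that the phase reduces to $-u(\xi)$ or $u(\xi)$ and the support lands on $w=a(v)$. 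Since orthogonality is invariant under $[-1]$ on the fibre, either phase sign is harmless there.

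I expect the main obstacle to be bookkeeping rather than mathematics. The Tate twists and shifts need $[r][r][-2r]$ and $(-r)$. The sign just described must be tracked carefully. Finally, each base change and projection formula must be invoked in $E(-)$, which is legitimate by the earlier remarks on the Grothendieck formalism with $\plusotimes$. Canonicity follows because every isomorphism used is a canonical one from that formalism or from Proposition \ref{orthogonality}.
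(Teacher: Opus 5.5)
Your strategy is the paper's. You reduce to a single integral over $V\times_S V^{\vee}\times_S V^{\vee\vee}$ by base change and the projection formula, merge the two kernels by additivity, shear so that the phase becomes the pairing $\widetilde m$ on $V^{\vee}\times_S V^{\vee\vee}$, apply orthogonality, and evaluate with the projection formula. Your shift and twist bookkeeping is right.

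However, the shear in Step 1 is wrong, and it alone causes the $(-a)_!$ you find in Step 3. Since $a(v)(\xi)=-m(v,\xi)$, the substitution $w=u-a(v)$ gives
\[ m(v,\xi)+w(\xi) = u(\xi) - a(v)(\xi) + m(v,\xi) = u(\xi)+2m(v,\xi), \]
not $u(\xi)$. So the claimed reduction to $\widetilde m$ fails, and the conclusion $(-a)_!K(-r)$ would contradict the (true) statement. Your hedge in Step 3 misdiagnoses the problem. The conventions in the statement are the canonical ones and need no adjustment. The $[-1]$-invariance of orthogonality is also beside the point: the issue is not the sign of the phase but where the locus $u=0$ sits.

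The fix is the other shear, $u=w-a(v)$, which is what the paper uses. Set $\alpha(x,y,z)=(y,z-a(x))$. Then $\widetilde m\circ\alpha = m_{12}+m_{23}$ exactly. With $\beta(x,z)=z-a(x)$, the square $\widetilde q\circ\alpha=\beta\circ p_{13}$ is cartesian. After the projection formula and base change, the inner term becomes $\beta^*\widetilde q_!\widetilde m^*\EE\simeq\beta^*s_!\11_S[-2r](-r)$. The preimage under $\beta$ of the zero section is the graph $\{z=a(x)\}$, i.e.\ the image of $(\id_V\times a)\circ\Delta$. Base change and the projection formula then give $a_!K(-r)$ directly, with no sign left to recheck.

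A quick sanity check would have caught this. The linear form $\xi\mapsto m(v,\xi)+w(\xi)$ vanishes identically exactly when $w=-m(v,-)=a(v)$, so that is where orthogonality must concentrate the kernel.
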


\begin{proof}Let $p_1, p_2, p_{12}$, etc., denote the projections from $V\times_S V^{\vee}\times_S V^{\vee\vee}$ to the named factors.
We also have projections:
\[
\xymatrix{
& V\times_S V^{\vee}\ar[ld]_p\ar[rd]^q & & V^{\vee}\times_S V^{\vee\vee}\ar[ld]_{\widetilde p}\ar[rd]^{\widetilde q} & & V\times_S V^{\vee\vee}\ar[ld]_{\bar q}\ar[rd]^{\bar p}\\
V & & V^{\vee} & & V^{\vee\vee} & & V 
} 
\]
and canonical pairings:
\[ m\colon V\times_S V^{\vee} \to \GG_a, \qquad \widetilde m\colon V^{\vee}\times_S V^{\vee\vee}\to \GG_a. \]
These fit into cartesian squares:
\[ \xymatrix{
V\times_S V^{\vee}\times_S V^{\vee\vee}\ar[d]^{p_{13}}\ar[r]^{\alpha} & V^{\vee}\times_S V^{\vee\vee}\ar[d]^{\widetilde q} \\
V\times_S V^{\vee\vee} \ar[r]^{\beta} & V^{\vee\vee}
} \qquad
\xymatrix{
V \ar[r]^{\Delta}\ar[d]^{\pi} & V\times_S V \ar[r]^{\id_V\times a} & V\times_S V^{\vee\vee}\ar[d]^{\beta} \\
S \ar[rr]_{s}& & V^{\vee\vee} }
\]
\[ \xymatrix{ V \times_S V^{\vee} \times_S V^{\vee\vee} \ar[r]^{p_{12}}\ar[d]^{p_{23}} & V\times_S V^{\vee} \ar[d]^{q}\\
V^{\vee}\times_S V^{\vee\vee} \ar[r]^{\widetilde p} & V^{\vee}} \]
where $\Delta$ is the diagonal, $s\colon S \to V^{\vee\vee}$ is the zero section, $\beta(x,z) = z-a(x)$, and $\alpha(x,y,z) = (y,z-a(x))$.
We compute (all functors below are on the categories $E(-)$):
\begin{align*}
\FT_{V^{\vee}}\circ \FT_V(K)
&= \bm{\widetilde q_!}(\bm{\widetilde p^* q_!}(\bm{p^*}K\plusotimes \bm{m^*}\EE)\plusotimes \bm{\widetilde m^*}\EE) [2r] & \text{(by definition)} \\
&\simeq \bm{\widetilde q_!}(\bm{p_{23!}p_{12}^*}(\bm{p^*}K \plusotimes \bm{m^*}\EE) \plusotimes \bm{\widetilde m^*}\EE)[2r] & \text{(proper base change)} \\
&\simeq  \bm{\widetilde q_! p_{23!}}(\bm{p_{12}^*}(\bm{p^*}K\plusotimes \bm{m^*}\EE)\plusotimes \bm{p_{23}^*\widetilde m^*}\EE)[2r] &\text{(projection formula)} \\
&\simeq \bm{\bar q_! p_{13!}}(\bm{p_{12}^*p^*}K \plusotimes \bm{p_{12}^*m^*}\EE \plusotimes \bm{p_{23}^*\widetilde m^*}\EE)[2r] & \text{($\widetilde q \circ p_{23} = \bar q \circ p_{13}$)}\\
&\simeq \bm{\bar q_! p_{13!}}(\bm{p_{13}^*\bar p^*}K \plusotimes \bm{p_{12}^*m^*}\EE \plusotimes \bm{p_{23}^*\widetilde m^*}\EE)[2r] & \text{($p\circ p_{12} =\bar p\circ p_{13}$)} \\
&\simeq \bm{\bar q_! p_{13!}}(\bm{p_{13}^*\bar p^*}K \plusotimes \bm{\alpha^*\widetilde m^*}\EE)[2r]& \text{(additivity)}\\
&\simeq \bm{\bar q_!}(\bm{\bar p^*}K \plusotimes \bm{p_{13!}\alpha^*\widetilde m^*}\EE)[2r] & \text{(projection formula)} \\
&\simeq \bm{\bar q_!}(\bm{\bar p^*}K \plusotimes \bm{\beta^*\widetilde q_!\widetilde m^*}\EE)[2r] & \text{(proper base change)}\\
&\simeq \bm{\bar q_!}(\bm{\bar p^*}K \plusotimes \bm{\beta^*s_!}\11_{S})(-r) & \text{(orthogonality)}\\
&\simeq \bm{\bar q_!}(\bm{\bar p^*}K \plusotimes \bm{(\id_V\times a)_!\Delta_!}\11_{V})(-r) & \text{(proper base change)} \\
&\simeq \bm{\bar q_!(\id_V\times a)_!\Delta_!}(\bm{\Delta^*(\id_V\times a)^*\bar p^*}K \plusotimes \11_V)(-r) & \text{(projection formula)} \\
&\simeq \bm{\bar q_!(\id_V\times a)_!\Delta_!}(K \plusotimes \11_{V})(-r) & \text{($\bar p\circ (\id_V\times a)\circ \Delta = \id$)}\\
&\simeq \bm{a_!}K(-r) & \text{($\bar q\circ (\id_V \times a) \circ\Delta = a$)}.
\end{align*}
\end{proof}

\begin{thm}\label{thm:fourmiracle}
There is a canonical isomorphism of functors $E(V) \to E(V^{\vee})$:
\[ \FT_V(K) \simeq q_*(p^*K\plusotimes m^*\EE)[r].\]
\end{thm}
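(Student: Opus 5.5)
The plan is to deduce the statement formally from the inversion theorem (Theorem \ref{fourthm}), by identifying the right-hand side with the right adjoint of a Fourier transform and then using that an equivalence's right adjoint is its inverse. No geometric input beyond Theorem \ref{fourthm}, and hence beyond orthogonality (Proposition \ref{orthogonality}), should be needed.

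\textbf{Step 1: $\FT_V$ is an equivalence.} Apply Theorem \ref{fourthm} to $V$ and to $V^{\vee}$. This gives $\FT_{V^{\vee}}\circ\FT_V\simeq \bm{a_!}(-)(-r)$ and $\FT_{V^{\vee\vee}}\circ\FT_{V^{\vee}}\simeq \bm{a'_!}(-)(-r)$, where $a'\colon V^{\vee}\to V^{\vee\vee\vee}$ is the analogous isomorphism. Since $\bm{a_!}$, $\bm{a'_!}$ and Tate twists are equivalences, $\FT_V$ has a left inverse and $\FT_{V^{\vee}}$ has a left inverse. Hence $\FT_{V^{\vee}}$ is an equivalence, and therefore so is $\FT_V$. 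Its quasi-inverse is
\[ \FT_V^{-1}\simeq \bm{a_!}^{-1}\circ\FT_{V^{\vee}}(-)(r). \]

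\textbf{Step 2: compute the right adjoint of $\FT_V$ in $E(-)$.} The right adjoint of $\bm{q_!}$ is $\bm{q^!}\simeq\bm{q^*}[2r](r)$, since $q$ is smooth of relative dimension $r$. The right adjoint of $\bm{p^*}$ is $\bm{p_*}$. Because $\bm{m^*}\EE$ is $\plusotimes$-invertible with inverse $\bm{m^*}[-1]^*\EE\simeq\bm{(-m)^*}\EE$ (Corollary \ref{einvertible} together with $\plusotimes$-monoidality of pullback), the right adjoint of $\bm{m^*}\EE\plusotimes-$ is $\bm{(-m)^*}\EE\plusotimes-$. Composing these gives
\[ G(M)=\bm{p_*}\bigl(\bm{q^*}M\plusotimes\bm{(-m)^*}\EE\bigr)[r](r). \]
Since $\FT_V$ is an equivalence by Step 1, $G\simeq\FT_V^{-1}\simeq \bm{a_!}^{-1}\FT_{V^{\vee}}(-)(r)$. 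The twists $(r)$ cancel, giving
\[ \bm{p_*}\bigl(\bm{q^*}M\plusotimes\bm{(-m)^*}\EE\bigr)[r]\simeq \bm{a^*}\,\bm{\widetilde q_!}\bigl(\bm{\widetilde p^*}M\plusotimes\bm{\widetilde m^*}\EE\bigr)[r]. \]

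\textbf{Step 3: translate into the claimed statement.} The identity of Step 2 is a statement about the transform $E(V^{\vee})\to E(V)$. I would apply it with the roles of $V$ and $V^{\vee}$ exchanged, that is, to the bundle $V^{\vee}$ with dual $V^{\vee\vee}$. Then I would transport along the isomorphism $a$ and use base change through the isomorphism $\id\times a$ of $V^{\vee}\times_S V\simeq V^{\vee}\times_S V^{\vee\vee}$. The sign in $a(v)=-m(v,-)$ then converts $\widetilde m\circ(\id\times a)$ into $-m$. This sign should exactly cancel the $(-m)$ appearing in the $\ast$-side, yielding $\bm{q_*}(\bm{p^*}K\plusotimes\bm{m^*}\EE)[r]\simeq\bm{q_!}(\bm{p^*}K\plusotimes\bm{m^*}\EE)[r]$.

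\textbf{Main obstacle.} I expect the only delicate part to be this bookkeeping of signs, shifts and twists in Step 3, so that they cancel on the nose. If the signs fail to cancel directly, the fallback is to pull back along $[-1]\colon V\to V$: $\bm{[-1]^*}$ commutes with both $\bm{q_!}$ and $\bm{q_*}$, so one may replace $m$ by $-m$ freely. Canonicity follows because every step uses canonical adjunction isomorphisms. The only choice involved is the isomorphism of Theorem \ref{fourthm}, which is itself canonical.
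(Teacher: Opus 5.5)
Your proposal is correct and is essentially the paper's argument (due to Verdier). It uses the same ingredients: the standard adjunctions, $q^!\simeq q^*[2r](r)$, base change along an isomorphism built from $a$, and Corollary \ref{einvertible}. The paper computes the right adjoint of $a^*\FT_{V^{\vee}}(-)(r)$, which lands directly on $q_*(p^*K\plusotimes m^*\EE)[r]$, and compares it with $\FT_V$. You instead compute the right adjoint of $\FT_V$, which gives the $\ast$-formula for the reverse transform with kernel $(-m)^*\EE$, so you need the re-indexing of Step 3. There $-\widetilde m\circ(\id\times a)=m$, and the signs cancel as you predict.

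Your Step 1 applies Theorem \ref{fourthm} to both $V$ and $V^{\vee}$ to obtain a two-sided inverse. This makes explicit a point the paper uses tacitly when it asserts that $\FT_V$ is right adjoint to $a^*\FT_{V^{\vee}}(-)(r)$.
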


\begin{proof}
This is a formal consequence of the inversion formula. The argument is due to J.L. Verdier (see \cite[Th\'eor\`eme 4.1]{L}). In more detail, use the notation in the proof of Theorem \ref{fourthm}.
Define $\widetilde a\colon V\times_S V^{\vee}\to V^{\vee}\times_S V^{\vee\vee}$ by $(x,y)\mapsto (y, a(x))$.
Let $L\in E(V^{\vee})$ and $K\in E(V)$.
Then we have canonical isomorphisms (all functors and $\Hom$ are on/in the categories $E(-)$): 
\begin{align*}
\Hom(a^*\FT_{V^{\vee}}(L)(r), K) &= \Hom(a^*\widetilde q_!(\widetilde p^*L\plusotimes \widetilde m^*\EE)[r](r), K) \\
&\simeq \Hom(\widetilde p^*L\plusotimes \widetilde m^*\EE, \widetilde q^!a_*K[-r](-r)) \\
&\simeq \Hom(\widetilde p^*L\plusotimes \widetilde m^*\EE, \widetilde a_* p^!K[-r](-r)) \\
&\simeq \Hom(\widetilde p^*L\plusotimes \widetilde m^*\EE, \widetilde a_* p^*K[r]) \\
&\simeq \Hom(\widetilde a^*(\widetilde p^*L\plusotimes \widetilde m^*\EE),  p^*K[r]) \\
&\simeq \Hom(q^*L \plusotimes m^*[-1]^*\EE, p^*K[r]) \\
&\simeq \Hom(q^*L, p^*K\plusotimes m^*\EE[r]) \\
&\simeq \Hom(L, q_*(p^*K\plusotimes m^*\EE[r])).
\end{align*}
The first isomorphism is given by standard adjunctions, the second is base change, the third is $p^! \simeq p^*[2r](r)$ (as $p$ is smooth of relative dimension $r$), the fourth is a standard adjunction, the fifth is monoidality of $\ast$-pullback combined with compatibility of compositions, the sixth follows from Corollary \ref{einvertible}, and the last one is a standard adjunction.
Thus, $L\mapsto q_*(p^*L\plusotimes m^*\EE)[r]$ is right adjoint to $a^*\FT_{V^{\vee}}(r)$. On the other hand, by Theorem \ref{fourthm}, $\FT_{V}$ is also right adjoint to $a^*\FT_{V^{\vee}}(r)$. Hence, by the uniqueness of adjoints (Yoneda lemma), we obtain the required result.
\end{proof}

\subsubsection{}
Given a map $f\colon X \to \GG_a$, define the isomorphism:
\[ \tau_f\colon X \times \GG_a \to X \times \GG_a, \quad (x,t)\mapsto (x, t + f(x)).\]

\begin{prop}\label{prop:altdescr}Let $K\in D(X \times \GG_a)$. Then we have a canonical isomorphism:
\[ K \plusotimes (f\times \id_{\GG_a})^*\EE \simeq \tau_{f!} K \]
in $D(X \times \GG_a)$.
\end{prop}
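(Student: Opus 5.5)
The plan is to compute $(f\times\id_{\GG_a})^*\EE$ explicitly by proper base change and then reduce the convolution to a pushforward along a graph, exactly as in the proof of Proposition \ref{additivity}.

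\textbf{Step 1: the kernel.} The square with top row $X\mapright{f}\GG_a$, vertical maps $\gamma_f\colon X\to X\times\GG_a$ and $\Delta$, and bottom row $f\times\id_{\GG_a}$ is cartesian: a point $(x,t)$ maps into the diagonal iff $t=f(x)$. Proper base change therefore gives
\[ (f\times\id_{\GG_a})^*\Delta_!\const{\GG_a}\simeq \gamma_{f!}\const{X}, \]
where $\gamma_f(x)=(x,f(x))$ is the graph.

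\textbf{Step 2: the convolution.} By definition,
\[ K\plusotimes\gamma_{f!}\const{X} = (\id_X\times\text{sum})_!\bigl(p_{12}^*K\otimes p_{13}^*\gamma_{f!}\const{X}\bigr). \]
The map $p_{13}$ restricted along $\tilde\gamma\colon X\times\GG_a\to X\times\GG_a\times\GG_a$, $(x,t)\mapsto(x,t,f(x))$, gives a cartesian square over $\gamma_f$. Indeed, the preimage of the graph under $p_{13}$ is $\{(x,t,f(x))\}$, which is identified with $X\times\GG_a$ via $\tilde\gamma$. Proper base change then yields
\[ p_{13}^*\gamma_{f!}\const{X}\simeq\tilde\gamma_!\const{X\times\GG_a}. \]
The projection formula gives
\[ p_{12}^*K\otimes\tilde\gamma_!\const{X\times\GG_a}\simeq\tilde\gamma_!\tilde\gamma^*p_{12}^*K. \]
Since $p_{12}\circ\tilde\gamma=\id$, this is $\tilde\gamma_!K$.

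\textbf{Step 3: identify the composite.} We have
\[ (\id_X\times\text{sum})\circ\tilde\gamma(x,t)=(x,t+f(x))=\tau_f(x,t), \]
so the convolution is $(\id_X\times\text{sum})_!\tilde\gamma_!K\simeq\tau_{f!}K$.

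There is no genuine obstacle here. The only care needed is checking that both squares are honestly cartesian as schemes, not merely on points. This holds because graphs of morphisms to the separated scheme $\GG_a$ are closed immersions cut out by $t=f(x)$, so the fiber products are computed scheme-theoretically.

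Canonicity is automatic, since each isomorphism used is a canonical base change or projection formula map. Note also that the entire argument takes place in $D(X\times\GG_a)$; no quotient is needed.
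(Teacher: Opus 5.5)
Your proposal is correct and follows essentially the same route as the paper: proper base change along $\Delta$ to get $\gamma_{f!}\const{X}$, proper base change along $p_{13}$ using $\tilde\gamma_f(x,t)=(x,t,f(x))$, the projection formula, and then $p_{12}\circ\tilde\gamma_f=\id$ and $\text{sum}\circ\tilde\gamma_f=\tau_f$. Like the paper, your argument works entirely in $D(X\times\GG_a)$ and never uses the quotient.
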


\begin{proof}
Define $\tilde\gamma_f\colon X \times \GG_a \to X \times\GG_a\times \GG_a$ by:
\[ \tilde\gamma_f(x,t) = (x,t, f(x)). \]
Then we have cartesian squares:
\[\xymatrix{
X\ar[r]^{f} \ar[d]^{\gamma_f} & \GG_a \ar[d]^{\Delta} \\
X \times \GG_a \ar[r]^{f\times\id_{\GG_a}}& \GG_a \times \GG_a
}
\quad
\xymatrix{
X \times \GG_a \ar[r]^{\text{pr}}\ar[d]^{\tilde \gamma_f} & X \ar[d]^{\gamma_f}\\
X \times \GG_a\times \GG_a \ar[r]^{p_{13}} & X \times \GG_a
}\]
where $\gamma_f$ is the graph of $f$, $\text{pr}$ is the evident projection, and $p_{13}$ is the projection to the first and third factors.
Thus,
\begin{align*}
K\plusotimes (f\times\id_{\GG_a})^*\EE &\simeq \text{sum}_!(p_{12}^*K \otimes p_{13}^*(f\times \id_{\GG_a})^*\Delta_!\const{\GG_a}) & \text{(by definition)} \\
&\simeq \text{sum}_!(p_{12}^*K \otimes p_{13}^*\gamma_{f!}f^* \const{\GG_a}) & \text{(proper base change)} \\
&\simeq \text{sum}_!(p_{12}^*K \otimes \tilde\gamma_{f!}\text{pr}^*\const{X} ) & \text{(proper base change)}\\
&\simeq \text{sum}_!\tilde\gamma_{f!}\tilde\gamma_f^*p_{12}^*K & \text{(projection formula)} \\
&\simeq \text{sum}_!\tilde\gamma_{f!}K & \text{(as $p_{12}\circ \tilde\gamma_f = \id_{X\times \GG_a}$)} \\
&\simeq \tau_{f!} K & \text{(as $\text{sum}\circ \tilde\gamma_f = \tau_f$)}.
\end{align*}
\end{proof}

\begin{prop}\label{prop:Edual}
Let $K\in E(X)$, and let $f\colon X \to \GG_a$ be a morphism. Then we have a canonical isomorphism in $E(X)$:
\[ \DD(K\plusotimes f^*\EE) \simeq (\DD K) \plusotimes f^*\EE.\]
Here $f^*\colon E(\GG_a) \to E(X)$ is the functor induced by $(f\times \id_{\GG_a})^*\colon D(\GG_a \times \GG_a) \to D(X\times \GG_a)$.
\end{prop}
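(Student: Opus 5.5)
Using Proposition \ref{prop:altdescr}, I would replace the convolution $K\plusotimes f^*\EE$ by a pushforward along an automorphism. Lift $K$ to an object of $D(X\times\GG_a)$, still written $K$. By Proposition \ref{prop:altdescr}, the class of $K\plusotimes (f\times\id_{\GG_a})^*\EE$ in $E(X)$ is the class of $\tau_{f!}K$. The map $\tau_f$ is an isomorphism of varieties, with inverse $\tau_{-f}$. Hence $\tau_{f!}=\tau_{f*}$ and $\tau_f^*=\tau_f^!$, and Verdier duality commutes with $\tau_{f!}$:
\[ \DD(\tau_{f!}K)\simeq \tau_{f*}\DD K\simeq \tau_{f!}\DD K. \]
Applying Proposition \ref{prop:altdescr} again, now to $\DD K$, gives $\tau_{f!}\DD K\simeq \DD K\plusotimes (f\times\id_{\GG_a})^*\EE$ in $D(X\times\GG_a)$. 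So the isomorphism already holds in $D(X\times\GG_a)$, before passing to the quotient.

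Next I would check that this descends to $E(X)$. Verdier duality on $E(X)$ is induced by classical duality on $D(X\times\GG_a)$ (\S\ref{s:verdier}). This works because $\DD\pi_X^*L\simeq\pi_X^!\DD L\simeq \pi_X^*\DD L[2](1)$, so $\DD$ preserves the thick subcategory $\pi_X^*D(X)$ and induces an (anti-)equivalence of the quotient. Likewise $\tau_{f!}$ preserves $\pi_X^*D(X)$, since $\pi_X\circ\tau_f=\pi_X$ gives $\tau_{f!}\pi_X^*L\simeq\tau_{f!}\tau_f^*\pi_X^*L\simeq\pi_X^*L$. Thus $\tau_{f!}$ descends to $E(X)$, and the isomorphism of Proposition \ref{prop:altdescr} descends to an isomorphism of functors $E(X)\to E(X)$ between $-\plusotimes f^*\EE$ and the induced $\tau_{f!}$. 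Chaining the three isomorphisms of the first paragraph in $E(X)$ then gives the statement.

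I expect the main obstacle to be canonicity and naturality rather than existence. One has to confirm that the isomorphism of Proposition \ref{prop:altdescr} is natural in $K$, which it is since each step there is a natural base change or projection-formula isomorphism. One also has to check that $\DD\tau_{f!}\simeq\tau_{f*}\DD$ is the standard natural isomorphism, so that everything passes to the Verdier quotient functorially. If more care is wanted, Proposition \ref{prop:unitprop} lets one avoid matching specific morphisms: any natural isomorphism suffices, and here all constructions are natural in $K$.
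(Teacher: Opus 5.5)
Your proposal is correct and is essentially the paper's proof: lift $K$, use Proposition \ref{prop:altdescr} to replace the convolution by $\tau_{f!}$, commute $\DD$ past $\tau_{f!}$ because $\tau_f$ is an isomorphism, and apply Proposition \ref{prop:altdescr} again to $\DD K$. You also check that $\DD$ and $\tau_{f!}$ preserve $\pi_X^*D(X)$, which spells out the descent to $E(X)$ that the paper leaves implicit. The closing appeal to Proposition \ref{prop:unitprop} is unnecessary, since no unit of adjunction is involved.
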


\begin{proof}
Let $\tilde K \in D(X \times \GG_a)$ be a lift of $K$. By Proposition \ref{prop:altdescr}, the object $\tau_{f!}\tilde K$ is a lift of $K\plusotimes f^*\EE$. Hence, $\DD \tau_{f!}\tilde K \simeq \tau_{f!}\DD \tilde K$ is a lift of $\DD(K\plusotimes f^*\EE)$. Applying Proposition \ref{prop:altdescr} again, we have that $\tau_{f!}\DD \tilde K$ is a lift of $(\DD K) \plusotimes f^*\EE$.
\end{proof}

\begin{cor}\label{cor:fourduality}
Let $K\in E(V)$. Then we have a canonical isomorphism:
\[ \DD(\FT_V(K)) \simeq \FT_V(\DD(K))(r).\]
\end{cor}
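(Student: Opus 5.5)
We compute $\DD$ of the definition of $\FT_V(K)$ directly, using Theorem \ref{thm:fourmiracle} to trade the $q_*$ that Verdier duality produces back for $q_!$. Write
\[ \FT_V(K) = \bm{q_!}(\bm{p^*}K\plusotimes \bm{m^*}\EE)[r]. \]
Since Verdier duality on $E(-)$ is induced from the classical one on $D(-\times\GG_a)$, the classical identities $\DD f_! \simeq f_*\DD$ and $\DD f^*\simeq f^!\DD$ descend to $E(-)$. Hence
\[ \DD\FT_V(K)\simeq \bm{q_*}\DD(\bm{p^*}K\plusotimes \bm{m^*}\EE)[-r]. \]

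For the inner term we apply Proposition \ref{prop:Edual} with $X=V\times_S V^{\vee}$, the map $m\colon X\to\GG_a$ in place of $f$, and the object $\bm{p^*}K$. The point is that $\bm{m^*}\EE$ is by definition $m^*$ of $\EE$ in the sense of that proposition. This gives
\[ \DD(\bm{p^*}K\plusotimes \bm{m^*}\EE)\simeq (\DD\bm{p^*}K)\plusotimes \bm{m^*}\EE. \]
Next, $p$ is smooth of relative dimension $r$, so
\[ \DD \bm{p^*}K\simeq \bm{p^!}\DD K\simeq \bm{p^*}\DD K[2r](r). \]
Here $p^!\simeq p^*[2r](r)$ holds classically for $p\times\id_{\GG_a}$ and therefore on $E(-)$. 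Combining these,
\[ \DD\FT_V(K)\simeq \bm{q_*}(\bm{p^*}\DD K\plusotimes \bm{m^*}\EE)[r](r). \]

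Finally, Theorem \ref{thm:fourmiracle}, applied to $\DD K$, identifies $\bm{q_*}(\bm{p^*}\DD K\plusotimes\bm{m^*}\EE)[r]$ with $\FT_V(\DD K)$. This yields $\DD\FT_V(K)\simeq \FT_V(\DD K)(r)$.

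The only non-formal input is the $q_!$ versus $q_*$ comparison, and it is already supplied by Theorem \ref{thm:fourmiracle}. What remains is bookkeeping: checking that the shifts combine as $-r+2r=r$, and that twists pass through $\plusotimes$, which is clear since $(r)$ commutes with all the operations.
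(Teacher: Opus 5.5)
Your proposal is correct and follows the paper's proof step for step. Both dualize $q_!$ into $q_*$, then apply Proposition~\ref{prop:Edual} to the pullback along $m$, then use $p^!\simeq p^*[2r](r)$ to get the shift $[r]$ and twist $(r)$, and finally identify the resulting $q_*$-expression with $\FT_V(\DD K)$ via Theorem~\ref{thm:fourmiracle}.
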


\begin{proof}
In $E(V^{\vee})$, we have:
\begin{align*}
\DD(q_!(p^*K\plusotimes m^*\EE)[r]) &\simeq q_*\DD(p^*K \plusotimes m^*\EE) [-r] \\
&\simeq q_*(\DD(p^* K) \plusotimes m^*\EE)[-r] & \text{(Proposition \ref{prop:Edual})}\\
&\simeq q_*(p^* \DD K \plusotimes m^*\EE)[r](r) \\
&\simeq \FT_V(\DD(K))(r) & \text{(Theorem \ref{thm:fourmiracle}).}
\end{align*}
\end{proof}

\subsection{Realizations}\label{s:realizations}
\subsubsection{}Recall that $\Dhol(X)$ is the bounded derived category of holonomic (not necessarily regular) D-modules on $X$. For $K\in D(X)$ we abuse notation and denote the underlying object in $\Dhol(X)$ by $K$ also.

\subsubsection{}\label{s:realcommute}
Let $\cL\in \Dhol(\GG_a)$ be such that:
\[ \pi_!\cL = 0,\]
where $\pi\colon \GG_a \to \Spec(\CC)$.
For $K\in D(X \times \GG_a)$, define:
\[ \real_{\cL}(K) = \pi_{X!}(K \otimes \mu^*\cL), \]
where $\mu\colon X\times \GG_a \to \GG_a$ is the projection map. By our assumption on $\cL$ and the classical projection formula, if $K\in \pi_X^*D(X)$, then $\real_{\cL}(K) = 0$. Thus, we get an induced functor:
\[ \real_{\cL}\colon E(X) \to \Dhol(X).\]
Using classical proper base change and the projection formula, one immediately gets canonical isomorphisms:
\[ f^* \circ \real_{\cL} \simeq \real_{\cL} \circ f^* \qquad \text{and} \qquad f_! \circ \real_{\cL} \simeq \real_{\cL} \circ f_!, \]
for any map $f\colon X\to Y$. These statements follow from the corresponding ones for $f\times \id_{\GG_a}$ on $D(X\times \GG_a)$. I.e., they do not use the quotient nature of $E(X)$ apart from the fact that the condition on $\cL$ ensures that $\real_{\cL}$ descends to the quotient.

\subsubsection{}Application of classical proper base change and the projection formula yields:
\begin{equation}\label{eq:realexpkernel}
\real_{\cL}(\EE) \simeq \cL.
\end{equation}

\begin{prop}
\label{realmonoidal}
Let $\cL\in \Dhol(\GG_a)$ be such that
\[ \text{sum}^*\cL \simeq \cL \boxtimes \cL \quad \text{in $\Dhol(\GG_a\times \GG_a)$}. \]
Then for $K_1,K_2\in E(X)$, we have canonical isomorphisms:
\[ \real_{\cL}(K_1\plusotimes K_2) \simeq \real_{\cL}(K_1)\otimes \real_{\cL}(K_2).\]
\end{prop}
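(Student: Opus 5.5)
We lift $K_1,K_2$ to objects $\tilde K_1,\tilde K_2\in D(X\times\GG_a)$ and prove the isomorphism there with the classical operations. Since $\real_{\cL}$ on $E(X)$ is induced from the functor on $D(X\times\GG_a)$, and $\plusotimes$ on $E(X)$ is induced by $\plusotimes$ on $D(X\times\GG_a)$, it suffices to produce a canonical isomorphism
\[ \pi_{X!}\bigl((\tilde K_1\plusotimes \tilde K_2)\otimes\mu^*\cL\bigr)\simeq \pi_{X!}(\tilde K_1\otimes\mu^*\cL)\otimes\pi_{X!}(\tilde K_2\otimes\mu^*\cL). \]
Naturality in each variable then shows that the isomorphism descends to the quotient. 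The hypothesis $\pi_!\cL=0$ is not needed for this step; it only serves to make $\real_{\cL}$ well defined on $E(X)$.

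First, rewrite the left side using the definition $\tilde K_1\plusotimes\tilde K_2=(\id_X\times\text{sum})_!(p_{12}^*\tilde K_1\otimes p_{13}^*\tilde K_2)$. Apply the projection formula for $(\id_X\times\text{sum})_!$, then $\pi_X\circ(\id_X\times\text{sum})=\rho$, where $\rho\colon X\times\GG_a\times\GG_a\to X$ is the projection. This gives
\[ \rho_!\bigl(p_{12}^*\tilde K_1\otimes p_{13}^*\tilde K_2\otimes(\id_X\times\text{sum})^*\mu^*\cL\bigr). \]
Next, $\mu\circ(\id_X\times\text{sum})=\text{sum}\circ\mu_{23}$, where $\mu_{23}\colon X\times\GG_a\times\GG_a\to\GG_a\times\GG_a$ is the projection. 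The hypothesis $\text{sum}^*\cL\simeq\cL\boxtimes\cL$ then yields
\[ (\id_X\times\text{sum})^*\mu^*\cL\simeq \mu_{23}^*(\cL\boxtimes\cL)\simeq p_{12}^*\mu^*\cL\otimes p_{13}^*\mu^*\cL. \]
Regroup the tensor product as $p_{12}^*(\tilde K_1\otimes\mu^*\cL)\otimes p_{13}^*(\tilde K_2\otimes\mu^*\cL)$.

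Now note that $X\times\GG_a\times\GG_a=(X\times\GG_a)\times_X(X\times\GG_a)$, with $p_{12},p_{13}$ the two projections. The Künneth formula for $!$-pushforward over the base $X$ gives, for $A,B\in\Dhol(X\times\GG_a)$,
\[ \rho_!(p_{12}^*A\otimes p_{13}^*B)\simeq\pi_{X!}A\otimes\pi_{X!}B. \]
To prove it, write $p_{12}^*A\otimes p_{13}^*B=\Delta^*(A\boxtimes B)$, where the diagonal $\Delta\colon X\to X\times X$ has been base changed. Then apply proper base change to $\pi_X\times\pi_X$ along $\Delta$, together with the exterior Künneth isomorphism $(\pi_X\times\pi_X)_!(A\boxtimes B)\simeq\pi_{X!}A\boxtimes\pi_{X!}B$. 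Taking $A=\tilde K_1\otimes\mu^*\cL$ and $B=\tilde K_2\otimes\mu^*\cL$ finishes the computation.

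The main point requiring care is that $\cL$ is irregular, so every step must take place in $\Dhol$, not in $D(-)$. Proper base change, the projection formula for $f_!$ and $\otimes=\Delta^*(\cdot\boxtimes\cdot)$, and exterior Künneth all hold for holonomic D-modules with the normalizations fixed in the conventions; I would cite \cite{HTT} for these. The underlying objects of $D(X\times\GG_a)$ are regular holonomic, hence holonomic, so the mixed tensor products make sense. Canonicity follows because each isomorphism used is canonical, apart from the fixed isomorphism $\text{sum}^*\cL\simeq\cL\boxtimes\cL$.
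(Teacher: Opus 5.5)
Your proposal is correct and follows the paper's proof step for step. Both apply the projection formula for $(\id_X\times\text{sum})_!$, rewrite $(\id_X\times\text{sum})^*\mu^*\cL$ as $\mu_{23}^*\text{sum}^*\cL$ and apply the hypothesis, regroup, and finish with the K\"unneth formula over $X$. Your diagonal-base-change derivation of that formula is exactly the ``formal consequence of proper base change and the projection formula'' that the paper alludes to.
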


\begin{proof}
This is an application of proper base change and the projection formula yet again. Let $p\colon X \times \GG_a \times \GG_a \to X$ and $q\colon X \times \GG_a \times \GG_a \to \GG_a\times\GG_a$ be the evident projections. Then:
\begin{align*}
\real_{\cL}(K_1\plusotimes K_2) &= \pi_{X!}(\text{sum}_!(p_{12}^*K_1 \otimes p_{13}^*K_2)\otimes \mu^*\cL) & \text{(by definition)} \\
&\simeq p_!(p_{12}^*K_1 \otimes p_{13}^*K_2 \otimes q^*\text{sum}^*\cL) & \text{(projection formula)} \\
&\simeq p_!(p_{12}^*K_1\otimes p_{13}^*K_2 \otimes p_{12}^*\mu^*\cL \otimes p_{13}^*\mu^*\cL) & \text{(hypothesis on $\cL$)} \\
&\simeq p_!(p_{12}^*(K_1\otimes \mu^*\cL) \otimes p_{13}^*(K_2\otimes \mu^*\cL)) \\
&\simeq \pi_{X!}(K_1 \otimes \mu^*\cL) \otimes \pi_{X!}(K_2 \otimes \mu^*\cL) & \text{(K\"unneth formula)} \\
&= \real_{\cL}(K_1) \otimes \real_{\cL}(K_2).
\end{align*}
In the projection formula step, we have also used:
\[ \pi_X\circ (\id_X \times \text{sum}) = p, \qquad \mu\circ (\id_X \times \text{sum}) = \text{sum} \circ q. \]
The K\"unneth formula is in turn a formal consequence of proper base change and the projection formula.
\end{proof}

\begin{prop}\label{prop:unital}
Write $i_0\colon \Spec(\CC)\to \GG_a$ for the group unit, and assume $\cL\in \Dhol(\GG_a)$ is such that $i_0^*\cL = \CC$.
Then
$\real_{\cL}(\11_X) \simeq \const{X}$.
\end{prop}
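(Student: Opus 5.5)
The plan is a direct computation with classical proper base change on a lift of $\11_X$. Note that the realization is computed on any lift to $D(X\times\GG_a)$, and the canonical lift of $\11_X$ is $(\id_X\times i_0)_!\const{X}$. So the task is to show
\[ \pi_{X!}\bigl((\id_X\times i_0)_!\const{X}\otimes \mu^*\cL\bigr) \simeq \const{X}. \]

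The steps, in order, are as follows.

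First, apply the classical projection formula for the closed immersion $\iota = \id_X\times i_0\colon X\to X\times\GG_a$. This gives
\[ \iota_!\const{X}\otimes\mu^*\cL \simeq \iota_!\bigl(\const{X}\otimes \iota^*\mu^*\cL\bigr) \simeq \iota_!\,\iota^*\mu^*\cL. \]

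Second, use $\mu\circ\iota = i_0\circ a_X$, where $a_X\colon X\to\Spec(\CC)$ is the structure map. This yields
\[ \iota^*\mu^*\cL \simeq a_X^*i_0^*\cL \simeq a_X^*\CC = \const{X}, \]
using the hypothesis $i_0^*\cL=\CC$.

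Third, since $\pi_X\circ\iota=\id_X$, we get
\[ \pi_{X!}\iota_!\const{X}\simeq\const{X}. \]

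There is one subtlety: the hypothesis $i_0^*\cL = \CC$ lives in $\Dhol(\Spec(\CC))$. One must check that pullback of the constant object along $a_X$ in $\Dhol$ agrees with the underlying holonomic object of $\const{X}$. This holds because the forgetful functor $D(X)\to\Dhol(X)$ commutes with $f^*$, $f_!$ and $\otimes$ under the stated normalizations. This compatibility is the only point needing care; everything else is formal. Also note that $\real_\cL$ is only defined when $\pi_!\cL=0$, which is assumed throughout \S\ref{s:realcommute}.
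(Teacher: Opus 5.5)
Your proposal is correct and is essentially the paper's argument: the paper's proof is just the one line ``use proper base change and the projection formula,'' and your computation is a complete unwinding of it. You apply the projection formula for $\iota=\id_X\times i_0$, use $\mu\circ\iota=i_0\circ a_X$ together with the hypothesis $i_0^*\cL=\CC$, and finish with $\pi_X\circ\iota=\id_X$; writing $\iota^*\mu^*=a_X^*i_0^*$ by functoriality of pullback is where you avoid invoking base change explicitly.
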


\begin{proof}
Use proper base change and the projection formula yet again.
\end{proof}

\subsubsection{}\label{s:defL}
For $\lambda \in\CC$, define $\cL_{\lambda}\in \Dhol(\GG_a)$ by requiring $\cL_{\lambda}[1]$ to be the D-module on $\GG_a = \Spec(\CC[t])$ given by:
\[ \Gamma(\GG_a, \cL_{\lambda}[1]) = \CC[t], \qquad \partial_t\cdot p(t) = p'(t) - \lambda p(t). \]
I.e., $\cL_{\lambda}[1] = \cD_{\GG_a}/\cD_{\GG_a}(\partial_t + \lambda)$.
The formal adjoint of the differential operator $\partial_t + \lambda$ is the operator $-\partial_t + \lambda$. Consequently (see \cite[\S2.6]{HTT}):
\begin{equation}\label{eq:dualofclambda}
\DD \cL_{\lambda} \simeq \cL_{-\lambda}[2].
\end{equation}
In particular, $\cL_0  = \const{\GG_a}$.

\subsubsection{}
For $\lambda\in \CC^{\times}$, set:
\[ \real_{\lambda} = \real_{\cL_{\lambda}}.\]

\begin{prop}\label{prop:realkernelproperties}
Let $\lambda\in\CC^{\times}$.
The following hold:
\begin{enumerate}
\item $\pi_! \cL_{\lambda} = 0$;
\item $i_0^*\cL_{\lambda} = \CC$;
\item $\text{sum}^*\cL_{\lambda} \simeq \cL_{\lambda} \boxtimes \cL_{\lambda}$.
\end{enumerate}
\end{prop}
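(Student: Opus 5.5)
Each of the three items is a direct computation with the explicit D-module $M=\cL_\lambda[1]=\cD_{\GG_a}/\cD_{\GG_a}(\partial_t+\lambda)$, i.e.\ $\CC[t]$ with $\partial_t$ acting by $p\mapsto p'-\lambda p$. Concretely $M\simeq \cO_{\GG_a}e^{-\lambda t}$, and I will translate everything through the dictionary in the conventions section. For (3) and (2) the natural tool is the non-characteristic pullback for smooth maps. For (1) it is the de Rham computation of $\pi_*$ combined with duality.

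For (1), since $\pi_!=\DD\pi_*\DD$ and $\DD\cL_\lambda\simeq\cL_{-\lambda}[2]$ by \eqref{eq:dualofclambda}, it suffices to show $\pi_*\cL_{-\lambda}=0$ for every $\lambda\neq0$. Up to shift, $\pi_*$ of a D-module $N$ on $\AA^1$ is computed by the two-term de Rham complex $\Gamma(N)\xrightarrow{\partial_t}\Gamma(N)$. For $N=\cL_{\mu}[1]$ with $\mu=-\lambda\neq 0$, this is the map $\CC[t]\to\CC[t]$, $p\mapsto p'-\mu p$. This map preserves degree filtrations and acts as $-\mu\neq0$ on leading coefficients, so it is injective. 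It is also surjective, by induction on degree or by triangularity on the finite-dimensional pieces $\CC[t]_{\le n}$. Hence the complex is acyclic and $\pi_*\cL_{-\lambda}=0$, so $\pi_!\cL_\lambda=0$. This is the only step that is more than bookkeeping, and it is exactly where $\lambda\neq0$ is used: for $\lambda=0$ the cokernel $\CC$ survives. The main thing to check carefully is the normalization, namely that the paper's $\pi_!$ is indeed dual to de Rham pushforward, so that no nonzero term escapes. Since vanishing is insensitive to shifts, I expect no trouble there.

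For (2), $i_0^*$ on a smooth module is, up to the shift normalization, the naive restriction $\cO$-module pullback $M/tM$. Here this gives $\CC[t]/t\CC[t]=\CC$, which is one-dimensional and sits in degree $0$ after accounting for the shift $[1]$, since $\cL_\lambda$ corresponds to a rank one "local system" placed like $\const{\GG_a}$. For (3), $\text{sum}$ is smooth, so $\text{sum}^*$ of the $\cO$-coherent module $M$ is the $\cO$-module pullback $\CC[t_1,t_2]$ with connection determined by $\partial_{t_i}\mapsto$ derivative minus $\lambda$ (chain rule, $\partial(t_1+t_2)/\partial t_i=1$). On the other side, $\cL_\lambda\boxtimes\cL_\lambda$ is the external tensor $\CC[t_1]\otimes\CC[t_2]=\CC[t_1,t_2]$ with the same action. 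The identity on $\CC[t_1,t_2]$ is therefore a $\cD$-isomorphism, which is the algebraic form of $e^{-\lambda(t_1+t_2)}=e^{-\lambda t_1}e^{-\lambda t_2}$. The shifts match because $\boxtimes$ of two objects each equal to a module shifted by $[-1]$ gives a shift of $[-2]$, and $\text{sum}^*$ for a smooth map of relative dimension $1$ produces the same shift under the normalization where $\const{}$ pulls back to $\const{}$.
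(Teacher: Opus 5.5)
Your proposal is correct and is essentially the paper's approach: explicit computation with the presentation $\CC[t]$. The paper writes this out only for (ii), leaving (i) and (iii) to the reader with a pointer to Katz; your reduction of (i), via $\pi_!=\DD\pi_*\DD$ and $\DD\cL_{\lambda}\simeq\cL_{-\lambda}[2]$, to the acyclicity of $p\mapsto p'+\lambda p$ on $\CC[t]$ is in the same spirit. The one difference is in (ii): the paper writes $i_0^*\cL_{\lambda}\simeq\DD(i_0^!\cL_{-\lambda}[2])$ and computes $i_0^!$ by the Koszul complex of $t$, which avoids the non-characteristic comparison of $i_0^*$ with naive restriction that you invoke, though both come down to $\CC[t]/t\CC[t]=\CC$.
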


\begin{proof}
These are standard and straightforward. Details are left to the reader (see \cite[\S2.10, \S12]{K}). We just illustrate the elementary computational approach via showing (ii). We have:
\[ i_0^*\cL_{\lambda} \simeq \DD (i^!_0 \DD \cL_{\lambda}) \simeq \DD(i_0^!\cL_{-\lambda}[2]). \]
By the definition of $\cL_{-\lambda}$, we have that $\cL_{-\lambda}[2]$ is concentrated in degree $-1$ and given by the differential equation $f'(t)-\lambda f(t) = 0$ there. So $i^!_0\cL_{-\lambda}[2]$ is represented by the complex:
\[ [\CC[t,\partial_t]/(\partial_t-\lambda) \mapright{t} \CC[t,\partial_t]/(\partial_t - \lambda) ] \]
concentrated in degrees $-1$ and $0$. Now $H^{-1}$ of this complex is $0$ and $H^0$ is $\CC$, whence the result.
\end{proof}

\subsubsection{}
One may also write $\real_{\lambda}$ in terms of the partial Fourier transform:
\[ (-)^{\wedge}\colon D^b_h(X\times \GG_a) \to D^b_h(X\times\GG_a) \]
of Appendix \ref{s:interlude}. Let $i_{\lambda}\colon X \to X\times \GG_a$ be the map given by:
\[ i_{\lambda}(x) = (x,\lambda).\]
\begin{prop}\label{prop:wedgerealcor}
Let $K\in E(X)$, and let $\lambda\in\CC^{\times}$. Then we have a canonical isomorphism:
\[ i^*_{\lambda}K^{\wedge}[-1] \simeq \real_{\lambda}(K). \]
\end{prop}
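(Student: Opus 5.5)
The plan is to compute both sides as integral transforms over $X\times\GG_a$ and compare them using proper base change. Since the appendix is not visible, I take the partial Fourier transform to be the standard Laumon-type transform on $D^b_h(X\times\GG_a)$:
\[ K^{\wedge} = p_{13!}\bigl(p_{12}^*K \otimes (\mathrm{mult}\circ p_{23})^*\cL_1\bigr)[1] , \]
with $p_{ij}$ the projections from $X\times\GG_a\times\GG_a$ and $\mathrm{mult}(t,s)=ts$. Here $\cL_1$ (or $\cL_{-1}$, depending on the sign convention fixed in Appendix \ref{s:interlude}) is the exponential kernel of \S\ref{s:defL}. The shift $[1]$ is chosen so that the transform preserves holonomic modules in degree $0$; it is exactly this shift that the $[-1]$ in the statement cancels.

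First I check that the left side descends to $E(X)$, i.e., that $i_\lambda^*K^\wedge=0$ for $K=\pi_X^*L$. For $\lambda\neq 0$ this follows from the computation below, since it reduces to $\pi_{X!}(\pi_X^*L\otimes\mu^*\cL_\lambda)\simeq L\otimes \pi_!\cL_\lambda=0$ by Proposition \ref{prop:realkernelproperties}(i). So both sides are functors on $E(X)$, and it suffices to build a natural isomorphism for lifts $K\in D(X\times\GG_a)$.

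The main computation is one application of proper base change. The square with top row $X\times\GG_a\to X\times\GG_a\times\GG_a$, $(x,t)\mapsto(x,t,\lambda)$, over $i_\lambda\colon X\to X\times\GG_a$ (vertical maps $\pi_X$ and $p_{13}$) is cartesian. Hence
\[ i_\lambda^*K^\wedge[-1] \simeq \pi_{X!}\bigl(K\otimes \mu^*(\lambda\cdot)^*\cL_1\bigr). \]
Since $(\lambda\cdot)^*\cL_1 \simeq \cL_\lambda$ for $\lambda\in\CC^\times$, this is $\real_\lambda(K)$. The identification $(\lambda\cdot)^*\cL_1\simeq\cL_\lambda$ is immediate: $\cD/\cD(\partial_t+1)$ pulls back under $t\mapsto\lambda t$ to $\cD/\cD(\lambda^{-1}\partial_t+1)=\cD/\cD(\partial_t+\lambda)$. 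In the same step I also use that $*$-pullback commutes with $\otimes$, together with the compositions $\mathrm{mult}\circ p_{23}\circ(x,t)\mapsto(x,t,\lambda) = \lambda t$ and $p_{12}\circ(\cdots)=\id$.

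Naturality holds because every isomorphism used is canonical, and compatibility with the quotient holds because both sides kill $\pi_X^*D(X)$. The step I expect to be the only real obstacle is bookkeeping against the appendix conventions, not any mathematics. If the appendix defines the transform with $\cL_{-1}$, with $p_{13*}$, or with a different shift, I would either absorb the sign by replacing $\lambda$ via $(-1)^*$, or swap $!$ for $*$. The latter requires the forget-supports statement; it is avoided if the appendix uses $!$-pushforward, which I expect, since the Proposition is stated with $\real_\lambda$ defined via $\pi_{X!}$.
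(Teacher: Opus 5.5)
Your computation is correct, and the kernel formula you assume,
\[ K^{\wedge}\simeq p_{13!}\bigl(p_{12}^*K\otimes m^*\cL_1\bigr)[1], \]
does hold in the paper's conventions, with exactly the sign $\cL_1$. Both it and the algebraic definition of Appendix \ref{s:interlude} send $\cL_\mu[1]$ to the delta module at $-\mu$. Replacing $\lambda$ by $-\lambda$ is not an available fix, since it would change the statement.

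The difference from the paper is where the nontrivial input sits. The appendix defines $(-)^{\wedge}$ algebraically. The only integral formula it records, Lemma \ref{wedgelemma}, is the Katz--Laumon $*$-form, written as $\DD\circ p_{13!}\circ\DD$ with kernel $\cL_{-1}$. So, contrary to your expectation, the appendix does not hand you the $!$-form. The paper therefore base-changes first, along $i_\lambda$ using $i_\lambda^!$, to get Proposition \ref{prop:wedgereal}. Only then does it dualize, using \eqref{eq:wedgedual} and $a\circ i_{-\lambda}=i_\lambda$, to reach Proposition \ref{prop:wedgepush}; the statement is a rewording of that proposition.

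Your $!$-form is what Lemma \ref{wedgelemma} and \eqref{eq:wedgedual} give after base change along $(x,y,z)\mapsto(x,y,-z)$. So you perform the same duality step, just at the level of the kernel, followed by a single $i_\lambda^*$ base change. The ingredient behind the $!/*$ swap is the classical duality compatibility \cite[Lemme 7.1.3]{KL}, equivalently forget supports for the full Fourier kernel, valid for all holonomic $K$. It is not Theorem \ref{thm:forgetsupports}. That theorem compares $\pi_{X!}$ and $\pi_{X*}$ after the exponential twist. Starting from the $*$-form, it would only identify $i_\lambda^!K^{\wedge}[1]$ with $\real_\lambda(K)$, leaving the passage to $i_\lambda^*$ undone.

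As for what each ordering buys: yours is a one-step computation once the $!$-kernel is granted. The paper's produces the $i_\lambda^!$ version, Proposition \ref{prop:wedgereal}, along the way, and that is what Theorem \ref{texact} and Lemma \ref{rhlemma} actually use.
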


\begin{proof}
Immediate from the definition of $\real_{\lambda}$ and Proposition \ref{prop:wedgepush}.
\end{proof}

\subsubsection{}\label{s:laplace}
In the notation of \S\ref{s:fourierdia}, one has the D-module Fourier transform:
\[ \FT^{\mathrm{dR}}_V\colon \Dhol(V) \to \Dhol(V^{\vee}) \]
given by the formula:
\[ \FT^{\mathrm{dR}}_V(K) = q_!(p^*K \otimes m^* \cL_1)[r].\]

\begin{thm}\label{fouriercompatible}
We have a canonical isomorphism:
\[ \real_{1} \circ \FT_V \simeq \FT^{\mathrm{dR}}_V \circ \real_{1}. \]
\end{thm}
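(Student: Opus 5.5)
The plan is to use the formal properties of $\real_1$ already established: it commutes with $f^*$ and $f_!$ (\S\ref{s:realcommute}), it is monoidal from $\plusotimes$ to $\otimes$ (Proposition \ref{realmonoidal}, whose hypothesis holds for $\cL_1$ by Proposition \ref{prop:realkernelproperties}), and $\real_1(\EE)\simeq\cL_1$ by \eqref{eq:realexpkernel}. With these, the computation is essentially formal:
\begin{align*}
\real_1(\FT_V(K)) &= \real_1\bigl(\bm{q_!}(\bm{p^*}K\plusotimes \bm{m^*}\EE)\bigr)[r] \\
&\simeq q_!\,\real_1(\bm{p^*}K\plusotimes\bm{m^*}\EE)[r] \\
&\simeq q_!\bigl(\real_1(\bm{p^*}K)\otimes \real_1(\bm{m^*}\EE)\bigr)[r] \\
&\simeq q_!\bigl(p^*\real_1(K)\otimes m^*\real_1(\EE)\bigr)[r] \\
&\simeq q_!\bigl(p^*\real_1(K)\otimes m^*\cL_1\bigr)[r] = \FT^{\mathrm{dR}}_V(\real_1(K)).
\end{align*}
The steps use, in order: compatibility with $f_!$ for $f=q$, monoidality, compatibility with $f^*$ for $f=p$ and $f=m$, and \eqref{eq:realexpkernel}. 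The shift $[r]$ passes through because all functors are triangulated.

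One point needs care. Here $\bm{m^*}\colon E(\GG_a)\to E(V\times_S V^\vee)$ is pullback along a map to $\GG_a$, and $\real_1$ on $E(\GG_a)$ is the realization over the base $X=\GG_a$. So $\real_1(\bm{m^*}\EE)\simeq m^*\real_1(\EE)$ is an instance of the $f^*$-compatibility with $f=m$. Similarly, $\real_1(\EE)\simeq\cL_1$ is the realization over the base $\GG_a$, which is exactly what \eqref{eq:realexpkernel} states.

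The main point to check is that these isomorphisms are canonical and well defined on the quotient $E(-)$. Each is constructed on lifts in $D(-\times\GG_a)$ by proper base change and the projection formula, so it is natural in the lift. It descends because $\real_1$ kills $\pi^*D$ and is defined through the universal property of the Verdier quotient; a morphism in $E$ is a roof, and naturality on $D$ gives naturality on roofs. I don't expect any genuine obstacle. The statement uses no quotient-specific input such as orthogonality, only the $D$-level yoga packaged in \S\ref{s:realcommute} and Proposition \ref{realmonoidal}.
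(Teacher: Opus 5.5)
Your proposal is correct and follows the same approach as the paper. The paper also commutes $\real_1$ past $q_!$, $p^*$ and $m^*$ (\S\ref{s:realcommute}), converts $\plusotimes$ to $\otimes$ via Proposition \ref{realmonoidal} (with the hypothesis supplied by Proposition \ref{prop:realkernelproperties}(iii)), and finishes with $\real_1(\EE)\simeq\cL_1$ from \eqref{eq:realexpkernel}.
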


\begin{proof}
We have already argued above that $\real_{1}$ commutes with $\ast$-pullback and $!$-pushforward quite generally. For commutation with $\plusotimes$, we need the prerequisite condition in Proposition \ref{realmonoidal} for $\cL_{1}$. This is given by Proposition \ref{prop:realkernelproperties}(iii). Finally, $\real_1(\EE) = \cL_1$ by \eqref{eq:realexpkernel}.
\end{proof}

\begin{thm}\label{thm:commutesoperations}
Let $\lambda\in \CC^{\times}$. Then $\real_{\lambda}$ commutes with the standard functors $f^*, f_!, f^!, f_*$. Furthermore, one has a canonical isomorphism:
\[ \DD \circ \real_{\lambda} \simeq \real_{-\lambda} \circ \DD.\]
\end{thm}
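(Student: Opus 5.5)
Commutation with $f^*$ and $f_!$ is already recorded in \S\ref{s:realcommute}, so the work is in $f^!$, $f_*$ and $\DD$. The plan is to introduce the dual realization
\[ \real^*_{\cL}(K) = \pi_{X*}(K\otimes^! \mu^!\cL)[-2](-1) \]
(with the shift/twist chosen so that it is comparable to $\real_{\cL}$), equivalently $\real^*_{\lambda}=\DD\circ\real_{-\lambda}\circ\DD$ by \eqref{eq:dualofclambda}. The definition is legitimate because $\pi_*\cL_\lambda = \DD\pi_!\DD\cL_\lambda = \DD\pi_!\cL_{-\lambda}[2]=0$ by Proposition \ref{prop:realkernelproperties}(i). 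The dual form of the arguments in \S\ref{s:realcommute} then shows that $\real^*_\lambda$ descends to $E(X)$, and that smooth/proper base change and the $!$-projection formula make $\real^*_\lambda$ commute with $f^!$ and $f_*$ (via $f\times\id_{\GG_a}$). By construction, $\DD\circ\real_{-\lambda}\circ \DD \simeq \real^*_{\lambda}$.

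It then suffices to construct a natural isomorphism $\real_\lambda\simeq\real^*_\lambda$ on $E(X)$. Given that, all the claims follow at once. For $f^!$, for example,
\[ f^!\real_\lambda\simeq f^!\real^*_\lambda\simeq \real^*_\lambda f^!\simeq\real_\lambda f^!, \]
and the argument for $f_*$ is identical. Duality is $\DD\real_\lambda\simeq\DD\real^*_\lambda\simeq \real_{-\lambda}\DD$, using $\DD\DD\simeq\id$.

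To compare the two, I would first observe that $K\otimes\mu^*\cL_\lambda$ and the $!$-tensor version differ only by a shift, since $\mu^*\cL_\lambda$ is a smooth (non-regular) line bundle with connection. The candidate map is therefore the forget-supports map
\[ \pi_{X!}(K\otimes\mu^*\cL_\lambda)\to\pi_{X*}(K\otimes\mu^*\cL_\lambda). \]
The claim is that this map is an isomorphism for every regular $K\in D(X\times\GG_a)$, which is exactly the content of the forget supports theorem (Theorem \ref{thm:forgetsupports}) of Appendix \ref{s:forgetsupports}. In Fourier language, via Proposition \ref{prop:wedgerealcor}, it says that $i_\lambda^*K^\wedge[-1]\to i_\lambda^!K^\wedge[1]$ is an isomorphism for $\lambda\neq0$. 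Heuristically this holds because $K^\wedge$ is smooth along $X\times\GG_m$ in the noncharacteristic sense relative to $X$ when $K$ is regular. The main obstacle is precisely this input. For irregular $K$ it fails, since the Fourier transform can have singularities at $\lambda\ne0$.

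If the appendix is not directly available in that form, I would reduce by dévissage: both sides are triangulated and compatible with $*$-restriction to strata, so one reduces to $X$ smooth and to $K$ a regular holonomic module. One then compactifies $\GG_a\subset\PP^1$ and computes the cone as the contribution at $\infty$. That contribution is the nearby-cycle type term of $K\otimes\mu^*\cL_\lambda$ at $X\times\{\infty\}$, which vanishes because $e^{-\lambda t}$ has pure slope $1$ while $K$ has slope $0$, so that the tensor product has no slope-$0$ part at infinity. Naturality of the forget-supports map ensures that the resulting isomorphisms are the canonical ones. Proposition \ref{prop:unitprop}-style arguments are not needed here, since the map is given a priori.
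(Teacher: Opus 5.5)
Your proposal is correct and is essentially the paper's argument. The paper likewise takes $f^*,f_!$ from \S\ref{s:realcommute} and deduces $\DD\circ\real_\lambda\simeq\real_{-\lambda}\circ\DD$ from the forget supports theorem (Theorem \ref{thm:forgetsupports}, extended from modules to bounded complexes by d\'evissage). It then gets $f^!,f_*$ by conjugating with $\DD$, which is exactly your identification $\real_\lambda\simeq\real^*_\lambda=\DD\circ\real_{-\lambda}\circ\DD$.

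One cosmetic slip: in your displayed formula for $\real^*_{\cL}$ the normalization should be $[2](1)$, not $[-2](-1)$. This follows from $K\otimes^!\mu^!\cL\simeq K\otimes\mu^*\cL[-2](-1)$, and with $[2](1)$ the formula agrees with $\pi_{X*}(K\otimes\mu^*\cL)$ and with $\DD\circ\real_{-\lambda}\circ\DD$.
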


\begin{proof}The functors $f^*$ and $f_!$ have already been discussed above in \S\ref{s:realcommute}. The claim for $f^!$ and $f_*$ follows from these combined with the statement for $\DD$. The latter is a consequence of Theorem \ref{thm:forgetsupports}.
\end{proof}

\subsection{t-structures}\label{s:tstructures}
\subsubsection{}
Recall that $M(X)$ denotes the heart of the usual (perverse) t-structure on $D(X)$. As $\pi_X$ is smooth of relative dimension $1$, the functor $\pi_X^*[1]$ is t-exact.

\begin{prop}\label{prop:extensions}
The essential image of
\[\pi_X^*[1]\colon M(X) \to M(X\times \GG_a) \]
is closed under subquotients and extensions.
\end{prop}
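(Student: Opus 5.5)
Throughout, $M\in M(X\times\GG_a)$. The plan is to characterize the essential image intrinsically and then check that the characterization is stable under subquotients and extensions. By Proposition \ref{prop:counitcharacterization}, $M$ lies in the essential image of $\pi_X^*[1]$ on hearts if and only if the counit $\pi_X^*\pi_{X*}M\to M$ is an isomorphism. If $M\simeq\pi_X^*L[1]$ with $L\in D(X)$, then $L\in M(X)$, because $\pi_X^*[1]$ is t-exact and faithful on cohomology: indeed $L\simeq \pi_{X*}\pi_X^*L$ by Lemma \ref{lem:adj}. So the essential image on hearts is exactly the set of $M\in M(X\times\GG_a)$ lying in $\pi_X^*D(X)$.

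\textbf{Extensions.} If $M'\to M\to M''\to$ is a short exact sequence in the heart with $M',M''$ in the image, then it is a distinguished triangle in $D(X\times\GG_a)$. Since $\pi_X^*D(X)$ is triangulated (Proposition \ref{prop:thick}), $M$ lies in $\pi_X^*D(X)$. By the first paragraph, $M$ is then in the image of $M(X)$. This step is formal.

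\textbf{Subquotients.} This is the main step. I would reduce it to showing that the image is closed under kernels and cokernels, and more precisely that the functor $\pi_X^*[1]\colon M(X)\to M(X\times\GG_a)$ is fully faithful and closed under subobjects.
\begin{enumerate}
\item Full faithfulness follows from Lemma \ref{lem:adj}.
\item For closure under subobjects, let $N\subset \pi_X^*L[1]$. Apply $\pi_{X*}$, or better the t-exact-friendly functor ${}^pH^{-1}\pi_{X*}$. Since $\pi_X$ is smooth of relative dimension $1$, $\pi_{X*}[1]$ has perverse amplitude $[-1,0]$ on the heart. Hence $A\mapsto {}^pH^{-1}(\pi_{X*}A)$ is left exact and right adjoint to $\pi_X^*[1]$ on hearts. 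Put $N_0={}^pH^{-1}\pi_{X*}N\subset L$, and consider the counit $\pi_X^*N_0[1]\to N$.
\item The goal is to show this counit is an isomorphism. Here I would use the geometric input: perverse sheaves pulled back along $\pi_X$ are the ones that are "constant along the fibres $\GG_a$". Locally over a stratification of $X$ where $L$ is a shifted local system, $\pi_X^*L[1]$ restricts to a local system on $U\times\GG_a$. Subobjects of such a perverse sheaf are perverse sheaves supported on closed subsets. Any such subobject is again a subobject of a local system on a product with the simply connected $\AA^1$, and $\pi_1(U\times\AA^1)=\pi_1(U)$, so it is pulled back from $U$.
\item To globalize the local description, I would use that the intermediate extension commutes with smooth pullback, $j_{!*}\pi^*[1]=\pi^*[1]j_{!*}$, together with the finite length of $M$. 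Then by induction on length, the simple constituents of $\pi_X^*L[1]$ are $\pi_X^*[1]$ of simple constituents of $L$. A cleaner route is to observe that a subobject of an object in the image has all simple constituents in the image and to use the extension closure proved above. But an object all of whose constituents come from $X$ need not itself come from $X$ \emph{a priori}, which is exactly what the extension step supplies. So the argument is: first handle simples via the local-system argument, then use induction on length together with extension closure.
\item Quotients then follow, since a quotient of $\pi_X^*L[1]$ has constituents among those of $\pi_X^*L[1]$.
\end{enumerate}

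The main obstacle I expect is the simples case: showing that a simple constituent of $\pi_X^*L[1]$ is of the form $\pi_X^*S[1]$. I would argue this via $\pi_X^*[1]IC(Z,\mathcal{V})=IC(Z\times\GG_a,\pi^*\mathcal{V})$ and the fact that irreducible local systems on $U\times\AA^1$ restrict from $U$, since $\pi_1(\AA^1)$ is trivial. Two further points need checking at this step:
\begin{itemize}
\item in the mixed Hodge setting, the Hodge structure on a simple object is determined by its polarizable variation, which again descends;
\item in the D-module setting one must use the Riemann–Hilbert correspondence, which is legitimate because $M$ has regular singularities.
\end{itemize}
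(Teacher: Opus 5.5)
Your argument is correct. For extensions it is the paper's argument: thickness of $\pi_X^*D(X)$, i.e.\ Proposition~\ref{prop:counitcharacterization}.

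For subquotients the routes differ. The paper simply invokes \cite[Corollaire 4.2.6.2]{BBD}: for any smooth surjective morphism with connected fibres of dimension $d$, the essential image of $f^*[d]$ on perverse sheaves is stable under subquotients. You instead derive subquotient closure from extension closure. Since $M(X\times\GG_a)$ has finite length, it suffices that every simple constituent of some $\pi_X^*L[1]$ lies in the image. Pulling back a composition series of $L$ reduces this to $\pi_X^*[1]$ sending simples to simples. You get that from $\pi_X^*[1]\,\mathrm{IC}(Z,\mathcal{V})\simeq \mathrm{IC}(Z\times\GG_a,\pi^*\mathcal{V})$ and the irreducibility of $\pi^*\mathcal{V}$.

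The comparison:
\begin{itemize}
\item The citation is shorter. It covers general smooth surjective morphisms with connected fibres, and needs neither finite length nor the classification of simple objects.
\item Your route is self-contained. It shows that, given finite length, the subquotient statement is really extension closure plus preservation of simples.
\end{itemize}

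Two things to tidy.
\begin{itemize}
\item In the Hodge setting, the underlying perverse sheaf of a simple mixed Hodge module need not be simple; $H^1$ of an elliptic curve over a point is an example. So the $\pi_1$ argument must be run on variations. A sub-variation of $\pi^*\mathbb{V}$ restricts on $Z^0\times\{0\}$ to a sub-variation of $\mathbb{V}$, so by constancy of rank it is $0$ or everything. Alternatively, transfer the perverse statement to mixed Hodge modules using conservativity of the forgetful functor and the counit criterion. The same transfer is needed to apply the perverse-sheaf result the paper cites.
\item The counit detour in your steps 2--3 is never completed and can be dropped. In any case, it is $\pi_{X*}$, not $\pi_{X*}[1]$, that has perverse amplitude $[-1,0]$.
\end{itemize}
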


\begin{proof}
As $\pi_X$ is surjective and smooth of relative dimension $1$ with connected fibres, \cite[Corollaire 4.2.6.2]{BBD} gives closure under subquotients. Closure under extensions follows from Proposition \ref{prop:counitcharacterization}.
\end{proof}

\subsubsection{}
Using induction and Proposition \ref{prop:counitcharacterization}, one sees that the essential image of $\pi_X^*\colon D(X) \to D(X\times \GG_a)$ consists of those objects whose (perverse) cohomology modules lie in the essential image of $\pi_X^*[1]\colon M(X) \to M(X\times \GG_a)$.
Together with Proposition \ref{prop:extensions} and Proposition \ref{prop:kernel} this puts us in the situation of \cite[Proposition 2.1.4]{B}: the t-structure on $D(X\times \GG_a)$ induces a non-degenerate t-structure on $E(X)$.  Write $EM(X)$ for its heart. Then, under the localization $D(X\times \GG_a)\to E(X)$, we may identify $EM(X)$ with the quotient abelian category:
\[ EM(X) = M(X\times \GG_a)/\pi_X^*[1]M(X).\]
The quotient functor $M(X\times \GG_a) \to EM(X)$ is an exact functor of abelian categories.

\begin{prop}\label{prop:conservative}
Fix $\lambda\in \CC^{\times}$. Let $K\in EM(X)$. If $\real_{\lambda}(K) = 0$, then $K=0$ in $EM(X)$.
\end{prop}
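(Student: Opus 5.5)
Lift $K$ to $\tilde K\in M(X\times\GG_a)$, a regular holonomic module (or the D-module underlying a mixed Hodge module). By Proposition \ref{prop:wedgerealcor} the hypothesis reads $i_\lambda^*\tilde K^\wedge[-1]=0$. The goal is to show that $\tilde K$ lies in $\pi_X^*[1]M(X)$. This is exactly the statement that the class vanishes in $EM(X)$ (Proposition \ref{prop:kernel} together with the identification of $EM(X)$ as a quotient).

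\textbf{Regularity of the Fourier transform away from zero.} For regular $\tilde K$, the partial Fourier transform $\tilde K^\wedge$ is a single holonomic module, since the Fourier transform is exact on modules. I expect its restriction to $X\times\GG_m$ to be very constrained: since $\tilde K$ is regular, including at $t=\infty$ along the fibres, $\tilde K^\wedge$ should be ``lisse in the $\lambda$-direction'' away from $\lambda=0$. More precisely, I expect $\tilde K^\wedge|_{X\times\GG_m}$ to be locally of the form $\mu^*\cL$-twisted pullback, and in any case to satisfy $\mu$-non-characteristic behaviour. This is the content I would extract from Appendix \ref{s:interlude}, in particular from Proposition \ref{prop:weaknonchar}: the costalk $i_\lambda^!\tilde K^\wedge[1]$ is concentrated in one degree for every $\lambda\neq0$. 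This is the main non-formal input.

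\textbf{From vanishing at one $\lambda$ to vanishing on $\GG_m$.} The fibrewise $\CC^\times$-dilation sends $\real_\lambda$ to $\real_{c\lambda}$ up to pulling $\tilde K$ back along $t\mapsto ct$. That alone does not transport the hypothesis, since $\tilde K$ is fixed. Instead I would argue that, by non-characteristicity, the support of $\tilde K^\wedge|_{X\times\GG_m}$ is a union of components of the form $Z\times\GG_m$, and that $i_\lambda^*$ detects vanishing along such a component. Concretely, $\tilde K^\wedge|_{X\times \GG_m}$ is a module which is smooth in the $\lambda$-direction over its support. Hence the set of $\lambda$ where $i_\lambda^*$ is nonzero is open and closed, and by connectedness of $\GG_m$ it is all of $\GG_m$ or empty. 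So $i_\lambda^*\tilde K^\wedge=0$ forces $\tilde K^\wedge|_{X\times\GG_m}=0$. The delicate point is to make ``$i_\lambda^*$ detects'' precise. I would try first to use the single-degree concentration from Proposition \ref{prop:weaknonchar}, the relation $i^*_\lambda\simeq i^!_\lambda[2]$ for non-characteristic inclusions, and a Nakayama-type argument on the $\cO_{X\times\GG_m}$-module relative to $\lambda$.

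\textbf{Conclusion.} Once $\tilde K^\wedge$ is supported on $X\times\{0\}$, Kashiwara's equivalence gives $\tilde K^\wedge\simeq i_{0*}N$. Inverting the Fourier transform then gives $\tilde K\simeq \pi_X^*N'[1]$ for some $N'$, because the inverse partial Fourier transform of a module supported on the zero section is a pullback. So $K=0$ in $EM(X)$.

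I expect the main obstacle to be the middle step: passing from vanishing at the single fibre $\lambda$ to vanishing over all of $\GG_m$. This uses regularity of $\tilde K$ essentially, since irregular input can have Fourier singularities at nonzero $\lambda$.
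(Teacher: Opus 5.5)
Your outer steps match the paper's proof. You pass to $\tilde K^\wedge$ via Proposition \ref{prop:wedgerealcor}, aim to show it is supported on $X\times\{0\}$, and finish by Fourier inversion (Proposition \ref{prop:wedgepull} and Kashiwara). The gap is the middle step, and it is more than making ``$i_\lambda^*$ detects'' precise: the input you plan to use is too weak.

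Proposition \ref{prop:weaknonchar} only says that $t-\mu$ is injective on $\tilde K^\wedge$ for $\mu\neq0$. That says nothing about smoothness in the $\lambda$-direction. Even combined with the isomorphism $i_\mu^*\tilde K^\wedge[-1]\simeq i_\mu^!\tilde K^\wedge[1]$ from Theorem \ref{thm:forgetsupports}, it cannot carry vanishing from one fibre to the others. Here is a counterexample with $X=\Spec(\CC)$ and $\alpha\in\CC-\ZZ$:
\begin{itemize}
\item Let $N$ be the module $\CC[t,(t-\lambda)^{-1}]$ with $\partial_t$ acting by $f\mapsto f'+\alpha(t-\lambda)^{-1}f$.
\item $N$ is torsion-free, so $i_\mu^!N[1]$ is concentrated in degree $0$ for every $\mu$.
\item $t-\lambda$ is bijective on $N$ and on $\DD N$ (same shape, with $-\alpha$), so $i_\lambda^!N=0=i_\lambda^*N$.
\item $N$ is smooth away from $\lambda$, so $i_\mu^*N[-1]\simeq i_\mu^!N[1]$ holds at every $\mu\neq0$.
\end{itemize}
Yet $N|_{\GG_m}\neq0$. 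Since $(-)^\wedge$ is an auto-equivalence, $N\simeq K^\wedge$ for some holonomic module $K$, necessarily irregular. So no argument built only from these properties can work.

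Your other tools are circular. The relation $i_\lambda^*\simeq i_\lambda^![2]$ for non-characteristic inclusions presupposes the non-characteristicity you are trying to prove; the paper attributes that refinement to Mochizuki in \S\ref{s:mochizuki} and does not use it. A Nakayama argument would need $\cO$-coherence of $\tilde K^\wedge$ on $X\times\GG_m$, which is again the smoothness statement.

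The missing ingredient is the classical theorem the paper imports, \cite[Theorem 2.10.16]{K}, or Malgrange. It says the Fourier transform of a holonomic module on $\GG_a$ with no slope-$1$ part at $\infty$ has no singularities on $\GG_m$; this covers regular modules. It is applied fibrewise:
\begin{itemize}
\item The partial Fourier transform commutes with restriction to $\{x\}\times\GG_a$ by base change. So for each $x\in X(\CC)$, the restriction of $\tilde K^\wedge$ to $\{x\}\times\GG_m$ has smooth cohomology modules.
\item Its stalk at $\lambda$ is the stalk at $x$ of $i_\lambda^*\tilde K^\wedge=0$. By connectedness of $\GG_m$, this restriction vanishes identically.
\item A holonomic complex on $X\times\GG_m$ all of whose pointwise $*$-restrictions vanish is zero.
\end{itemize}
This route needs no relative non-characteristicity on $X\times\GG_m$ at all.

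A smaller point: in the Hodge setting you must come back from D-modules to mixed Hodge modules. The paper does this by conservativity of the forgetful functor. Concretely, whether the counit in Proposition \ref{prop:counitcharacterization} is an isomorphism can be checked on underlying D-modules.
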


\begin{proof}
The forgetful functor from mixed Hodge modules to (regular) D-modules is conservative. Consequently, we may assume $K$ is a regular holonomic D-module.
It follows from \cite[Theorem 2.10.16]{K} (alternatively, see \cite[Chapter XI, Remarque 4.4.5]{Ma}) that the Fourier transform $K^{\wedge}$ has no singularities on $\{x\} \times (\GG_a - \{0\})$, for each $x\in X(\CC)$.\footnote{For a holonomic D-module $K$ on $\GG_a$, only slopes equal to $1$ at $\infty$ contribute to singularities of $K^{\wedge}$ on $\GG_a-\{0\}$. For regular $K$, slopes at $\infty$ are all $0$.}
Thus, by Proposition \ref{prop:wedgerealcor}, if $\real_{\lambda}(K) = 0$, then the support of $K^{\wedge}$ is contained in $X\times \{0\}$. Consequently, by Fourier inversion, $K$ lies in $\pi_X^*D(X)$.
\end{proof}

\begin{thm}\label{texact}Let $\lambda\in \CC^{\times}$. Then $\real_{\lambda}\colon E(X) \to \Dhol(X)$ is t-exact.
\end{thm}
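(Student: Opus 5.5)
Since the t-structure on $E(X)$ is induced from the perverse t-structure on $D(X\times\GG_a)$ and is non-degenerate, with heart $EM(X)=M(X\times\GG_a)/\pi_X^*[1]M(X)$, it suffices to show that $\real_\lambda$ sends every object of $EM(X)$ into the heart of the standard t-structure on $\Dhol(X)$. Right and left t-exactness on all of $E(X)$ then follow by dévissage. Indeed, every object of $E(X)$ is the class of some bounded $K\in D(X\times\GG_a)$, and the localization $D(X\times\GG_a)\to E(X)$ is t-exact by construction. Truncation triangles for $K$ therefore map to truncation triangles in $E(X)$, and since $\real_\lambda$ is triangulated, induction on the amplitude of $K$ reduces everything to objects in the heart. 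So the claim to prove is: for $K\in M(X\times\GG_a)$, the complex $\real_\lambda(K)$ is concentrated in degree $0$.

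Next I would translate via Proposition \ref{prop:wedgerealcor}: $\real_\lambda(K)\simeq i_\lambda^*K^\wedge[-1]$. Since we only need the underlying holonomic D-module, the forgetful functor from Hodge modules reduces us to $K$ a regular holonomic D-module on $X\times\GG_a$. The partial Fourier transform is t-exact on holonomic D-modules, so $K^\wedge$ lies in the heart. The question thus becomes whether the restriction $i_\lambda^*$ to the slice $X\times\{\lambda\}$, shifted by $[-1]$, is concentrated in a single degree. Equivalently, via the costalk formulation $i_\lambda^!K^\wedge[1]$, this is the statement recorded as Proposition \ref{prop:weaknonchar} in the appendix.

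The key input is non-characteristicity. As in the proof of Proposition \ref{prop:conservative}, regularity of $K$ means its fibrewise slopes at $\infty$ vanish. Hence $K^\wedge$ acquires no singularities along $\GG_a-\{0\}$ in the fibre direction: the slice $X\times\{\lambda\}$ is non-characteristic for $K^\wedge$ in a suitable (weak) sense. For a non-characteristic smooth divisor $i$, both $i^*[-1]$ and $i^![1]$ of a holonomic module are concentrated in degree $0$, and they agree up to that shift. This gives the desired single-degree concentration.

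The main obstacle is exactly this non-characteristic step. The pointwise statement "no singularities on $\{x\}\times(\GG_a-\{0\})$" is not obviously the same as non-characteristicity of the whole hypersurface $X\times\{\lambda\}$, because the characteristic variety of $K^\wedge$ may contain conormal directions along $X$ that are tangent to the slice. I would first try to reduce to $X$ smooth via a stratification on which $K$ is well-behaved, using that $\real_\lambda$ commutes with $f^*$ and $f_!$ to dévisse along strata. I would then check concentration on each stratum by base change to points, invoking Proposition \ref{prop:weaknonchar} where the direct argument is insufficient.
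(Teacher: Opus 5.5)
Your reduction to the heart, the passage to regular holonomic D-modules, and the translation $\real_\lambda(K)\simeq i_\lambda^*K^{\wedge}[-1]$ all match the paper. The step that actually closes the proof, however, is only asserted. ``Equivalently, via the costalk formulation $i_\lambda^!K^{\wedge}[1]$'' is not a formal equivalence for a fixed $K$. Identifying $i_\lambda^*K^{\wedge}[-1]$ with $i_\lambda^!K^{\wedge}[1]$ is the forget-supports isomorphism of \S\ref{s:mochizuki}, which is stronger than what you are proving. The paper uses Verdier duality instead:
\begin{itemize}
\item Proposition \ref{prop:wedgereal} applied to the pair $(\DD K,-\lambda)$ gives $\DD\real_\lambda(K)\simeq i_{-\lambda}^!(\DD K)^{\wedge}[1]$.
\item $\DD K$ is again regular holonomic, $-\lambda\in\CC^\times$, and $\DD$ preserves holonomic modules.
\item So Proposition \ref{prop:weaknonchar} applied to $(\DD K,-\lambda)$ gives the claim.
\end{itemize}
With this step inserted, your first two paragraphs are exactly the paper's proof, and nothing further is needed.

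The non-characteristic route you single out as the key input is not used in the paper, and your plan for it does not go through. As you note yourself, absence of singularities of $K^{\wedge}$ on each $\{x\}\times(\GG_a-\{0\})$ does not give non-characteristicity of $X\times\{\lambda\}$. The paper records non-characteristicity as a finer statement due to Mochizuki and does not rely on it. Your stratification fallback fails for two reasons:
\begin{itemize}
\item $i_x^*$ is not t-exact, so base change to points detects cohomological degrees only where the cohomology of $\real_\lambda(K)$ is smooth, i.e.\ generically.
\item Gluing across strata with only $f^*$ and $f_!$ uses the triangle $j_!j^*\to\id\to i_*i^*$ on $X$, where $\real_\lambda$ commutes with $i^*$ but the restriction of $K$ to $Z\times\GG_a$ is only concentrated in degrees $\le 0$. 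This yields only $\real_\lambda(K)\in D^{\le 0}$. The other half would need commutation with $i^!$, which in the paper comes from Theorem \ref{thm:forgetsupports}; at that point the stratification is superfluous.
\end{itemize}
What replaces all of this is the direct algebraic proof of Proposition \ref{prop:weaknonchar}. Locally, with $X$ smooth affine, $i_\lambda^!K^{\wedge}[1]$ is the complex $[K\xrightarrow{\partial_t-\lambda}K]$, so one must show $\partial_t-\lambda$ is injective on $K$. On the $t$-torsion this follows from $t^{d+1}(\partial_t-\lambda)v=-(d+1)t^dv$. On $K[t^{-1}]$, set $u=t^{-1}$. The operator $-u^2\partial_u-\lambda$ then pushes any kernel element into $\bigcap_k V^{\alpha+k}$. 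That intersection is zero because the $V$-filtration of a regular module is separated.
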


\begin{proof}
It suffices to show that if $K$ is a regular holonomic D-module on $X\times \GG_a$, then $\real_{\lambda}(K)$ is concentrated in degree $0$, for each $\lambda\in\CC^{\times}$. For this, it in turn suffices to show that $\DD (\real_{-\lambda}(\DD K))$ is concentrated in degree $0$ for all $\lambda\in\CC^{\times}$. By Proposition \ref{prop:wedgereal}, this is equivalent to showing $i^!_{\lambda}K^{\wedge}[1]$ is concentrated in degree $0$. This is Proposition \ref{prop:weaknonchar}.
\end{proof}


\begin{thm}\label{faith}Let $\lambda\in \CC^{\times}$. Then
\[ \real_{\lambda}\colon EM(X) \to \mathrm{Hol}(X) \]
is faithful. Here, $\mathrm{Hol}(X)$ is the category of holonomic D-modules on $X$.
\end{thm}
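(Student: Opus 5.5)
The argument is the standard one: an exact functor between abelian categories that kills no nonzero object is faithful. By Theorem \ref{texact}, $\real_\lambda$ is t-exact, so it restricts to an exact functor of abelian categories $EM(X)\to \mathrm{Hol}(X)$. Here we use that the heart of the standard t-structure on $\Dhol(X)$ is $\mathrm{Hol}(X)$, and that $EM(X)$ is the heart of the induced t-structure on $E(X)$. Proposition \ref{prop:conservative} says that this exact functor kills no nonzero object of $EM(X)$.

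To prove faithfulness, let $\phi\colon K\to L$ be a morphism in $EM(X)$ with $\real_\lambda(\phi)=0$. Since $EM(X)$ is abelian (the quotient $M(X\times\GG_a)/\pi_X^*[1]M(X)$), we can form the image $I=\mathrm{im}(\phi)$, so that $\phi$ factors as an epimorphism $K\twoheadrightarrow I$ followed by a monomorphism $I\hookrightarrow L$. Exactness of $\real_\lambda$ on the heart gives
\[ \real_\lambda(I)\simeq \mathrm{im}(\real_\lambda(\phi)) = 0. \]
Proposition \ref{prop:conservative} then gives $I=0$, and hence $\phi=0$.

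There is one point to check: that exactness on hearts really follows from t-exactness. A short exact sequence in $EM(X)$ gives a distinguished triangle in $E(X)$. Applying $\real_\lambda$ yields a distinguished triangle in $\Dhol(X)$ whose three terms are concentrated in degree $0$, by Theorem \ref{texact}. Such a triangle is a short exact sequence in $\mathrm{Hol}(X)$. Therefore $\real_\lambda$ preserves kernels, cokernels and images, which is what the image computation above uses.

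All the real content lies in the two cited results, Theorem \ref{texact} and Proposition \ref{prop:conservative}. The only step needing any care is the one just checked, namely that the heart of $E(X)$ maps into the heart of $\Dhol(X)$ exactly.
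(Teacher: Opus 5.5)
Your proposal is correct and is the same argument as the paper's. Theorem \ref{texact} makes $\real_\lambda$ exact on $EM(X)$, Proposition \ref{prop:conservative} shows it kills no nonzero object, and together these force faithfulness; you spell out the standard image argument that the paper leaves implicit.
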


\begin{proof}
Since $\real_{\lambda}$ is t-exact, it suffices to show that it is conservative. This is given by Proposition \ref{prop:conservative}.
\end{proof}

\begin{remark}For $X = \Spec(\CC)$, this recovers a variant of \cite[\S4, Theorem 4]{KS}.
\end{remark}

\begin{remark}Here is a `realization' functor:
\[ \real_{\phi}\colon E(X) \to D(X) \]
which does not fit the pattern of the $\real_{\lambda}$ that were the focus of \S\ref{s:realizations}.
This functor lands in regular objects.
Namely, let $\phi\colon D(X\times \GG_a) \to D(X)$ be the vanishing cycles functor\footnote{Shift conventions are that vanishing cycles are t-exact.} along the projection $X\times\GG_a \to \GG_a$. Clearly, $\phi$ kills objects that come from $D(X)$. Hence, it factors through a functor $\real_{\phi}$ on $E(X)$. This is t-exact and commutes with Verdier duality, smooth pullback and proper pushforward. It also intertwines $\plusotimes$ with $\otimes$ (Thom-Sebastiani). However, $\real_{\phi}$ is not faithful (or even conservative).
\end{remark}

\begin{remark}\label{rem:nearbyrealization}
Here is a second realization functor of a different character. Let $\Psi$ be the nearby cycles functor along the projection $X\times\GG_a \to \GG_a$ (our shift convention for $\Psi$ is that it is t-exact).
Set:
\[ \real_{\Psi}(K) = \Psi(K^\wedge). \]
Informally: 
\[ \real_{\Psi}(K) \simeq \lim_{\lambda \to 0}i_{\lambda}^*K^{\wedge}[-1] \simeq \lim_{\lambda \to 0} \real_{\lambda}(K). \]
The functor $\real_\Psi$ is t-exact and commutes with Verdier duality. It is the natural $\lambda$-independent analogue of the $\real_{\lambda}$, to which it should be isomorphic for each $\lambda\in\CC^{\times}$. Granting this, the monodromy filtration on $\Psi$ should give a geometric interpretation of the weight filtration on the image of realization (\S\ref{s:weights}).
\end{remark}

\subsection{Weights}\label{s:weights}
\subsubsection{}In this section $D(X) = D^b(MHM(X))$ since we need to discuss weights.
Let $W$ denote the weight filtration on mixed Hodge modules. As the image of 
\[ \pi_X^*[1]\colon M(X) \to M(X\times \GG_a) \]
is closed under subquotients (Proposition \ref{prop:extensions}), one immediately gets that $W$ descends to $EM(X)$. Similarly, it follows that morphisms in $EM(X)$ are \emph{strictly} compatible with the (induced) associated graded functors $\Gr^W_i$ on $EM(X)$.

\subsubsection{}Let $\pH^i$ denote the cohomology functors associated to the t-structure on $E(X)$. Define weights for objects $K\in E(X)$ in the evident way: $K$ is said to have weights $\leq n$ (resp. $\geq n$) if $\Gr^W_k\pH^i(K) = 0$ for $k>i+n$ (resp. $k < i+n$). The object $K$ is called pure of weight $n$ if $\Gr_k^W\pH^i(K) = 0$ for $k\neq i+n$.

\subsubsection{}It is immediate from the corresponding statements in $D(X\times \GG_a)$ that $f^*, f_!$ do not raise weights, $f^!, f_*$ do not lower weights, and Verdier duality reverses weights.

\begin{remark}If $K\in M(X\times \GG_a)$ is pure of weight $k$, then necessarily the class of $K$ in $EM(X)$ is pure of weight $k$. However, if an object $L\in EM(X)$ is pure of weight $k$, and $L'\in M(X\times \GG_a)$ is some lift of it, then $L'$ need only be pure of weight $k$ up to factors coming from $M(X)$. This is due to the identification $EM(X) = M(X\times \GG_a)/\pi_X^*[1]M(X)$, in turn ultimately a consequence of Proposition \ref{prop:kernel}.
\end{remark}

\subsubsection{}To address the effect of Fourier transform on weights, we now use the notation and setup of \S\ref{s:fourier}.
\begin{prop}\label{prop:Ewt}
Suppose $K\in E(V\times_S V^{\vee})$ has weights $\leq n$ (resp. $\geq n$). Then so does $K\plusotimes m^*\EE$.
\end{prop}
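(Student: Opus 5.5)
The plan is to use Proposition \ref{prop:altdescr} to replace the convolution with $m^*\EE$ by an automorphism of $V\times_S V^{\vee}\times\GG_a$, and then to check that this automorphism preserves perverse cohomology and weights on the level of $D$. Write $Y = V\times_S V^{\vee}$ and choose a lift $\tilde K\in D(Y\times\GG_a)$ of $K$. By Proposition \ref{prop:altdescr}, $\tilde K\plusotimes (m\times\id_{\GG_a})^*\EE \simeq \tau_{m!}\tilde K$. Here $\tau_m(y,t) = (y, t+m(y))$ is an isomorphism of varieties, so $\tau_m\colon Y\times\GG_a\to Y\times\GG_a$ is an automorphism. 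Therefore $\tau_{m!}\tilde K$ is a lift of $K\plusotimes m^*\EE$ in $E(Y)$.

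Since $\tau_m$ is an isomorphism, $\tau_{m!} = \tau_{m*}$ is an equivalence of categories $D(Y\times\GG_a)\to D(Y\times\GG_a)$. It is t-exact for the perverse t-structure and preserves the weight filtration on mixed Hodge modules; in particular $\Gr^W_k$ commutes with $\tau_{m!}$ on $M(Y\times\GG_a)$. Moreover $\tau_m$ commutes with $\pi_Y$, because $\pi_Y\circ\tau_m=\pi_Y$, so $\tau_m^*\pi_Y^* = \pi_Y^*$. Hence $\tau_{m!}$ preserves the thick subcategory $\pi_Y^*D(Y)$, as well as its heart part $\pi_Y^*[1]M(Y)$. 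It follows that $\tau_{m!}$ descends to an exact autoequivalence $T$ of $E(Y)$, and that $T$ is t-exact for the induced t-structure, so $\pH^i T\simeq T\pH^i$. By the previous paragraph, $T(K)\simeq K\plusotimes m^*\EE$.

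It remains to check that $T$ commutes with $\Gr^W_k$ on $EM(Y) = M(Y\times\GG_a)/\pi_Y^*[1]M(Y)$. Take $L\in EM(Y)$ with a lift $\tilde L\in M(Y\times\GG_a)$. The induced filtration $W$ on $L$ is the image of $W$ on $\tilde L$. Since $\tau_{m!}W_k\tilde L = W_k\tau_{m!}\tilde L$ and the quotient functor is exact, we get $\Gr^W_k T(L)\simeq T\Gr^W_k(L)$. Combining this with t-exactness gives $\Gr^W_k\pH^i(K\plusotimes m^*\EE)\simeq T\,\Gr^W_k\pH^i(K)$. Because $T$ is an equivalence, the left side vanishes exactly when $\Gr^W_k\pH^i(K)$ does. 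The weight conditions $\leq n$ and $\geq n$, and purity, are therefore preserved.

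The only point needing care is that $\tau_{m!}$ preserves weights of mixed Hodge modules. This holds because the pushforward along an isomorphism is both $f_!$, which does not raise weights, and $f_*$, which does not lower them. Hence it preserves weights exactly, including pointwise on $\Gr^W$ by strictness.
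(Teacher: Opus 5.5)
Your proposal is correct and follows essentially the same route as the paper: lift $K$, use Proposition \ref{prop:altdescr} to identify $K\plusotimes m^*\EE$ with the class of $\tau_{m!}\tilde K$, and note that $\tau_{m!}$ preserves the subcategory of objects coming from $D(V\times_S V^{\vee})$. Beyond the paper, you spell out the steps it leaves as ``visibly'', namely that the induced autoequivalence is t-exact and commutes with $\Gr^W_k$ on $EM$.
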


\begin{proof}
Let $\tilde K$ be a lift of $K$ to $D(V\times_S V^{\vee}\times \GG_a)$. By Proposition \ref{prop:altdescr}, the object $\tau_{m!}\tilde K \in D(V\times_S V^{\vee}\times \GG_a)$ is a lift of $K \plusotimes m^*\EE$, where $\tau_m$ is the isomorphism given by:
\[ \tau_m\colon V\times_S V^{\vee} \times \GG_a \to V\times_S V^{\vee} \times \GG_a, \quad (x,y,t)\mapsto (x,y, t+m(x,y)). \]
Visibly, $\tau_{m!}$ preserves the subcategory of objects that come from $D(V\times_S V^{\vee})$. Hence, if $K$ has weights $\leq n$ (resp. $\geq n$), then so must the class of $\tau_{m!}\tilde K$ in $E(V\times_S V^{\vee})$.
\end{proof}


\begin{thm}\label{thm:fourwts}
Let $V\to S$ be a vector bundle of constant rank $r\geq 1$.
The Fourier transform $\FT_V\colon E(V) \to E(V^{\vee})$ preserves purity. More precisely, if $K\in E(V)$ is pure of weight $k$, then $\FT_V(K)$ is pure of weight $k+r$.
\end{thm}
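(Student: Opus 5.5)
The plan is the classical Deligne/Katz–Laumon argument: bound weights from above using the $!$-formula and from below using the $*$-formula of Theorem \ref{thm:fourmiracle}.

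\emph{Upper bound.} Suppose $K\in E(V)$ has weights $\leq k$. Then $\bm{p^*}K$ has weights $\leq k$, since $\ast$-pullback does not raise weights.

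\begin{itemize}
\item By Proposition \ref{prop:Ewt}, $\bm{p^*}K\plusotimes\bm{m^*}\EE$ also has weights $\leq k$.
\item Then $\bm{q_!}$ does not raise weights.
\item The shift $[r]$ raises weights by exactly $r$, since the definition declares $K$ of weight $\leq n$ when $\Gr^W_j\pH^i(K)=0$ for $j>i+n$.
\end{itemize}

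Hence $\FT_V(K)=\bm{q_!}(\bm{p^*}K\plusotimes\bm{m^*}\EE)[r]$ has weights $\leq k+r$.

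\emph{Lower bound.} Suppose $K$ has weights $\geq k$. We would like to use $\bm{p^!}$, which does not lower weights, but the formula involves $\bm{p^*}$. Since $p$ is smooth of relative dimension $r$, we have $\bm{p^*}K\simeq \bm{p^!}K[-2r](-r)$.

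\begin{itemize}
\item $\bm{p^!}K$ has weights $\geq k$.
\item The operation $[-2r](-r)$ lowers the cohomological index by $2r$ and raises Hodge weights by $2r$. Checking the convention, $\Gr^W_j\pH^{i}(L[-2r](-r))=\Gr^W_{j-2r}\pH^{i-2r}(L)$. So if $L$ has weights $\geq k$, the nonvanishing pieces satisfy $j-2r\geq i-2r+k$, i.e.\ $j\geq i+k$. Thus $\bm{p^*}K$ has weights $\geq k$.
\item Proposition \ref{prop:Ewt} then gives that $\bm{p^*}K\plusotimes\bm{m^*}\EE$ has weights $\geq k$.
\item $\bm{q_*}$ does not lower weights.
\end{itemize}

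By Theorem \ref{thm:fourmiracle}, $\FT_V(K)\simeq\bm{q_*}(\bm{p^*}K\plusotimes\bm{m^*}\EE)[r]$, so $\FT_V(K)$ has weights $\geq k+r$.

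\emph{Conclusion.} Combining the two bounds, if $K$ is pure of weight $k$ then $\FT_V(K)$ is pure of weight $k+r$.

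\emph{Where the care goes.} There are three points to check carefully.

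\begin{itemize}
\item \emph{Bookkeeping for shifts and twists.} I will verify the claimed effects of $[n]$ and $(m)$ on the weight conditions directly from the definition via $\Gr^W$ of $\pH^i$. A twist $(m)$ raises weights by $2m$, and the shift $[n]$ reindexes cohomology.
\item \emph{Weight properties of $\bm{f^!}$ and $\bm{f_*}$ in $E(-)$.} These are asserted in \S\ref{s:weights} as immediate from $D(-\times\GG_a)$. The one thing to confirm is that weights of a class are computed from any lift modulo pieces from $\pi^*D(X)$. This follows since $W$ descends to the quotient abelian category $EM(X)$ and $\pH^i$ commutes with the quotient.
\item \emph{The $*$-version of the Fourier transform.} This is the genuinely nontrivial input, and it is exactly Theorem \ref{thm:fourmiracle}, the analogue of Deligne's forget-supports miracle. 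Without it only the upper bound would be available. I also need that Proposition \ref{prop:Ewt} is insensitive to whether the outer pushforward is $!$ or $*$. It is, since it concerns only the tensoring step.
\end{itemize}
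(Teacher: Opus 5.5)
Your argument is correct and is essentially the paper's, with the duality step unwound: the paper gets the lower bound by dualizing the upper bound via Corollary \ref{cor:fourduality} (itself just Theorem \ref{thm:fourmiracle} plus Proposition \ref{prop:Edual}), while you push the $*$-formula of Theorem \ref{thm:fourmiracle} directly through $p^!$, Proposition \ref{prop:Ewt} and $q_*$. The only slip is in your closing remarks, where you say a twist $(m)$ raises weights by $2m$; it lowers them by $2m$, which is what your correct computation for $(-r)$ actually uses.
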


\begin{proof}
Let $K\in E(V)$ be pure of weight $k$.
It follows from the definition of $\FT_V$, the general interaction of weights with the standard operations given above, and Proposition \ref{prop:Ewt} that $\FT_V(K)$ has weights $\leq k+r$. So $\DD\FT_V(K)$ has weights $\geq -k-r$. But, by Corollary \ref{cor:fourduality}:
\[ \DD\FT_V(K) \simeq \FT_V(\DD K)(r). \]
The right hand side has weights $\leq -k-r$. So $\DD \FT_V(K)$ must be pure of weight $-k-r$. Hence, $\FT_V(K)$ must be pure of weight $k+r$.
\end{proof}

\subsection{Complement: alternative description of $E_{rh}(X)$}
\subsubsection{}In this section we work exclusively in the non-mixed setting. I.e., $D(X) = D^b_{rh}(X)$. To emphasize this we write $E_{rh}(X)$ instead of $E(X)$.

\subsubsection{}In general, realization $E_{rh}(X) \to D^b_h(X)$ fails to be full. This is already visible for $X=\Spec(\CC)$. However, we will show in Theorem \ref{thm:alt1} that $E_{rh}(X)$ can be embedded as a subcategory of $D^b_h(X\times \GG_m)$. This description was motivated by trying to understand \emph{why} the cross-characteristic comparisons made in the special cases of \cite[Chapter 12-14]{K} hold with the ease (relative to the general situation) that they do.

\subsubsection{}Let $j\colon X \times \GG_m \to X \times \GG_a$ be the evident inclusion of varieties and recall that $i_0\colon X \to X \times \GG_a$, $x\mapsto (x,0)$ is the inclusion of the complementary closed subset $X \times \{0\}$.

\begin{lemma}\label{rhlemma}
Let $K\in D^b_{rh}(X\times \GG_a)$. Then $i_0^!K^{\wedge} \in D^b_{rh}(X)$. That is, $i_0^!K^{\wedge}$ has cohomology modules with regular singularities.
\end{lemma}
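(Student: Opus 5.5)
The plan is to express $i_0^!K^{\wedge}$ through the classical operations applied to $K$ itself, with no exponential twist. Once that is done, regularity follows because $D^b_{rh}$ is stable under the six operations.

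The partial Fourier transform of Appendix \ref{s:interlude} is $K^{\wedge} = p_{13!}(p_{12}^*K\otimes m^*\cL_1)[1]$ on $X\times\GG_a\times\GG_a$, with $m(x,t,\lambda)=t\lambda$, up to the sign and shift normalizations fixed there (this is the formula behind Proposition \ref{prop:wedgepush}). Restricting to $\lambda=0$ kills the exponential factor, since $\cL_0=\const{\GG_a}$ (\S\ref{s:defL}). For the $*$-restriction this is literally proper base change: $i_0^*K^{\wedge}\simeq \pi_{X!}K[1]$, which is regular.

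For the $!$-restriction I would use duality rather than base change. Theorem \ref{thm:forgetsupports} and Appendix \ref{s:interlude} give the commutation of the partial Fourier transform with $\DD$, namely $\DD(K^{\wedge})\simeq (\DD K)^{\wedge}$ up to the sign $\lambda\mapsto-\lambda$ and a shift/twist. This is the classical Fourier-transform compatibility with duality for holonomic D-modules (\cite[\S2.10]{K}). It gives
\[ i_0^!K^{\wedge}\simeq \DD\, i_0^*\DD(K^{\wedge}) \simeq \DD\, i_0^*\big((\DD K)^{\wedge}\big)[\ast] \simeq \DD\,\pi_{X!}\DD K[\ast] \simeq \pi_{X*}K[\ast]. \]
Here the sign $[-1]^*$ is harmless because it fixes the zero section. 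Since $K$ is regular holonomic, $\pi_{X*}K\in D^b_{rh}(X)$, which proves the lemma. In fact the computation identifies $i_0^!K^{\wedge}$ with $\pi_{X*}K$ up to shift.

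The main obstacle is the duality step: the partial Fourier transform is defined with $p_{13!}$, and one needs $p_{13!}\simeq p_{13*}$ on the twisted objects. This is a forget-supports statement, not formal. If I wanted to avoid it, I would instead compute $i_0^!$ directly. Using $i_0^!\simeq i_0^*[-2](-1)$ is unavailable because $K^{\wedge}$ is not smooth in $\lambda$. A direct route is to apply the triangle $\pi_X^*\pi_{X*}K\to K\to Q_*QK\to$. For the first term, $(\pi_X^*L)^{\wedge}$ is a $\delta$-object along $\lambda=0$, whose $i_0^!$ is computed explicitly and is regular. For $K'=Q_*QK$ one has $\pi_{X*}K'=0$. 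One then shows $i_0^!K'^{\wedge}=0$ via the localization triangle $i_{0*}i_0^!\to \id\to j_*j^*$ together with $\pi_{X*}$-vanishing, in the same way the orthogonality proposition used its base-change computation. I would try the duality route first and fall back on this one only if the sign/shift bookkeeping in Theorem \ref{thm:forgetsupports} does not match.
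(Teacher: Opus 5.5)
Your argument is correct and is essentially the paper's. Both reduce the lemma to $i_0^!K^{\wedge}\simeq \pi_{X*}K[-1]$ and then use that $\pi_{X*}$ preserves regularity. The paper reads this isomorphism directly off Proposition \ref{prop:wedgereal} at $\lambda=0$, using $\cL_0=\const{\GG_a}$. You obtain it from Proposition \ref{prop:wedgepush} at $\lambda=0$ together with \eqref{eq:wedgedual} and $a\circ i_0=i_0$, which is the same relation the paper uses to pass between those two propositions.

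One correction. The duality input you need is exactly \eqref{eq:wedgedual} (\cite[Lemme 7.1.3]{KL}), which holds for all holonomic $K$. It has nothing to do with Theorem \ref{thm:forgetsupports}: that theorem concerns $\pi_X$, requires $\lambda\neq 0$ and regular $K$, and fails at $\lambda=0$. So the ``obstacle'' you flag does not arise, and the fallback route is unnecessary. As sketched, the fallback would in any case need the very formula $i_0^!K'^{\wedge}\simeq\pi_{X*}K'[-1]$ to get its vanishing.
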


\begin{proof}
By Proposition \ref{prop:wedgereal}, $i_0^!K^{\wedge} \simeq \pi_{X*}K[-1]$ and $\pi_{X*}$ preserves regularity.
\end{proof}

\subsubsection{}Consider the functor $D^b_{rh}(X\times \GG_a) \to D^b_h(X \times \GG_m)$ given by $K\mapsto j^*(K^{\wedge})$. This functor sends $\pi^*_XD(X)$ to zero. Hence, it induces a functor $E_{rh}(X) \to D^b_h(X\times \GG_m)$.
\begin{thm}\label{thm:alt1}
The functor $E_{rh}(X) \to D^b_h(X \times \GG_m)$ given by:
\[ K \mapsto j^*(K^{\wedge}) \]
is full, faithful and t-exact.
\end{thm}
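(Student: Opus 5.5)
The plan is to realize $E_{rh}(X)$ through the Fourier transform, and to identify it with the full subcategory of $D^b_h(X\times\GG_a)$ whose objects $L$ satisfy $i_0^*L = 0$ (or $i_0^!L=0$). Fourier inversion on the partial Fourier transform (Appendix \ref{s:interlude}) gives an equivalence $(-)^\wedge$ from $D^b_{rh}(X\times\GG_a)$ onto its essential image $\mathcal{F}\subset D^b_h(X\times\GG_a)$. Under this equivalence, $\pi_X^*D^b_{rh}(X)$ corresponds to objects supported on $X\times\{0\}$. These are of the form $i_{0*}M$; the support statement is the one already used in the proof of Proposition \ref{prop:conservative}, and the image is regular by Lemma \ref{rhlemma}. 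Hence $E_{rh}(X)\simeq \mathcal{F}/\{\text{objects of }\mathcal{F}\text{ supported on }X\times 0\}$.

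The next step is to show that the functor $\mathcal{F}/(\text{supported on }0)\to D^b_h(X\times\GG_m)$, $L\mapsto j^*L$, is fully faithful. I will use the recollement triangle $j_!j^*L\to L\to i_{0*}i_0^*L$ inside $D^b_h(X\times \GG_a)$.

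The key point is that for $L=K^\wedge$ with $K$ regular, the object $i_0^*L$ (dually $i_0^!L$, which is regular by Lemma \ref{rhlemma}) is again regular. Also, $i_{0*}$ of a regular object on $X$ is the Fourier transform of $\pi_X^*$ of that object up to shift, so it lies in $\mathcal{F}$ and is killed in the quotient. It follows that $L\mapsto j_!j^*L$ stays inside $\mathcal{F}$ and gives, on the quotient, an inverse representative. Concretely, $Q_!$ corresponds to $j_!j^*$, and $Q_*$ corresponds to $j_*j^*$ via $i_0^!$ and Lemma \ref{rhlemma}.

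Morphisms are then computed as
\[\Hom_{E}(K_1,K_2)=\Hom(Q_!QK_1,K_2)\simeq \Hom(j_!j^*K_1^\wedge,K_2^\wedge)\simeq\Hom(j^*K_1^\wedge,j^*K_2^\wedge).\]
This gives fullness and faithfulness at once. To justify the first identification, I check that $j_!j^*K^\wedge$ is left orthogonal to objects $i_{0*}M$. This holds since $\Hom(j_!(-),i_{0*}M)=\Hom(-,j^*i_{0*}M)=0$.

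For t-exactness, let $K$ be a regular holonomic module on $X\times\GG_a$; I must show $j^*K^\wedge$ is concentrated in degree $0$. The stalks at $\lambda\ne0$ can be controlled by Theorem \ref{texact} together with Proposition \ref{prop:wedgerealcor}: $i_\lambda^*K^\wedge[-1]$ is concentrated in one degree for every $\lambda\in\CC^\times$, and $i_\lambda^!K^\wedge[1]$ likewise by Proposition \ref{prop:weaknonchar}. Since $X\times\GG_m\to X$ is a family over $\GG_m$, I would argue that an object whose $*$-restrictions to all slices $X\times\{\lambda\}$ lie in degree $\le 0$ after the shift $[-1]$, and whose $!$-restrictions lie in degree $\ge0$ after the shift $[1]$, is perverse. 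One can do this through the noncharacteristic property: Appendix \ref{s:interlude} shows that $K^\wedge$ has no singularities along $\GG_m$-directions, i.e. the slices are noncharacteristic, and then restriction to slices is t-exact up to shift and jointly conservative.

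I expect this last step, deducing perversity of $j^*K^\wedge$ from slice-wise concentration, to be the main obstacle. If a general criterion is lacking, I would fall back on the fact that the Fourier transform of a holonomic module in degree $0$ is again in degree $0$ (exactness of the D-module Fourier transform, which holds for partial transforms too). Then $j^*$ preserves degree $0$ directly, and $t$-exactness on $E_{rh}(X)$ follows because the t-structure there is induced from $D(X\times\GG_a)$.
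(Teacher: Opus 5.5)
Your proposal is correct and follows the paper's route. Both arguments transport $E_{rh}(X)$ along $(-)^{\wedge}$ to the quotient of the essential image by $i_{0*}D^b_{rh}(X)$ (Proposition \ref{prop:wedgepull}). Both then use that the cone of a $j$-adjunction map on $K^{\wedge}$ is $i_{0*}$ of a regular object, so it dies in the quotient.

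The paper works with $K^{\wedge}\to j_*j^*K^{\wedge}$ and $i_0^!$, applying Lemma \ref{rhlemma} directly, and checks fullness and faithfulness separately, the latter via roofs. Your version is the dual one: you use $j_!j^*$, the regularity of $i_0^*K^{\wedge}\simeq\pi_{X!}K[1]$ (Proposition \ref{prop:wedgepush} at $\lambda=0$, or duality plus Lemma \ref{rhlemma}), and left orthogonality to $i_{0*}D^b_{rh}(X)$. This gives both fullness and faithfulness at once, once you make the routine check that your composite bijection is the map induced by the functor.

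For t-exactness, your fallback is exactly the paper's one-line reason: $(-)^{\wedge}$ and $j^*$ are t-exact, and the t-structure on $E_{rh}(X)$ is induced from $D(X\times\GG_a)$. The slice-wise detour is therefore unnecessary. It also misattributes the non-characteristic property to Appendix \ref{s:interlude}; the paper credits that to Mochizuki in \S\ref{s:mochizuki}. The slice criterion you wanted does nonetheless follow from Theorem \ref{texact}, Proposition \ref{prop:weaknonchar} and the support characterization of perversity, without any non-characteristic input.
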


\begin{proof}
The t-exactness is immediate. For full and faithful argue as follows.
Let $\cC$ denote the essential image, under Fourier transform, of $D^b_{rh}(X\times\GG_a)$ in $D^b_h(X\times \GG_a)$. 
Note that for $K\in D^b_{h}(X)$, one has a canonical isomorphism $(\pi_X^*K)^{\wedge} \simeq i_{0*}K[-1]$ (Proposition \ref{prop:wedgepull}).
Write $\bar\cC$ for the quotient of $\cC$ by $i_{0*}D^b_{rh}(X)$. Then Fourier transform gives an equivalence $E_{rh}(X) \mapright{\sim} \bar \cC$, and $j^*\colon D^b_{h}(X \times \GG_a) \to D^b_h(X\times \GG_m)$ visibly induces a functor $\bar j^* \colon \bar \cC \to D^b_h(X\times \GG_m)$. Our problem becomes to show that $\bar j^*$ is full and faithful. Let $K_1, K_2\in D^b_{rh}(X\times \GG_a)$.

To see $\bar j^*$ is full, let $\phi\colon j^*K_1^{\wedge} \to j^*K_2^{\wedge}$ be a morphism in $D^b_h(X\times \GG_m)$. The cones of the adjunction maps $K_1^{\wedge} \to j_*j^*K_1^{\wedge}$ and $K_2^{\wedge} \to j_*j^* K_2^{\wedge}$ are $i_{0*}i_0^!K_1^{\wedge}[1]$ and $i_{0*}i_0^!K_2^{\wedge}[1]$, respectively.
Hence, thanks to Lemma \ref{rhlemma}, the above adjunction maps become isomorphisms in $\bar \cC$. Consequently, in $\bar \cC$ we have a map:
\[ K_1^{\wedge} \mapright{\sim} j_*j^*K_1^{\wedge} \mapright{j_*(\phi)} j_*j^*K_2^{\wedge} \mapright{\sim} K_2^{\wedge}. \]
This map goes to $\phi$ under $\bar j^*$.

To see faithfulness, observe that a map $f\colon K_1^{\wedge} \to K_2^{\wedge}$ in $\bar \cC$ may be represented by a diagram:
\[ K_1^{\wedge} \mapleft{s} L \mapright{g} K_2^{\wedge} \]
with the cone of $s$ lying in $i_{0*}D^b_{rh}(X)$ (see \cite[Definition 2.1.11]{Neeman}). If $\bar j^*(f) = 0$, then necessarily $j^*(g) = 0$. This implies $g$ must factor through $i_{0*}i^!_0 K_2^{\wedge}$. So, by Lemma \ref{rhlemma}, $g$ is zero in $\bar \cC$. Hence, $f$ is zero.
\end{proof}

\appendix
\makeatletter
\@addtoreset{subsubsection}{section}
\makeatother
\renewcommand{\thesubsubsection}{\textbf{\thesection.\arabic{subsubsection}}}
\renewcommand{\theequation}{\bfseries\thesubsubsection.\arabic{equation}}

\section{Partial Fourier transform}\label{s:interlude}
\subsubsection{}
The partial Fourier transform is the t-exact auto-equivalence:
\[ (-)^{\wedge}\colon \Dhol(X\times \GG_a) \to \Dhol(X\times \GG_a)\]
characterized locally on smooth $X$ as follows.
Write $\GG_a = \Spec(\CC[t])$.
For $U\subset X$ an open affine and $K$ a D-module on $U\times \GG_a$, one has $\Gamma(U\times\GG_a, K^{\wedge}) = \Gamma(U\times\GG_a, K)$ as a $\Gamma(\cD_U)$-module, but with $t$ acting on $\Gamma(K^{\wedge})$ as $\partial_t$ does on $\Gamma(K)$, and $\partial_t$ acting on $\Gamma(K^{\wedge})$ as $-t$ does on $\Gamma(K)$.
The sign convention is chosen to match with that of \cite[\S7.1]{KL} and \cite[\S2.10]{K}. One has:
\[ K^{\wedge\wedge} \simeq a^*K, \]
where $a\colon X \times \GG_a \to X \times \GG_a$ is given by $a(x,t) = (x,-t)$.
Furthermore, there is a canonical isomorphism \cite[Lemme 7.1.3]{KL}:
\begin{equation}\label{eq:wedgedual} \DD(K^{\wedge}) \simeq a^*((\DD K)^{\wedge}).\end{equation}
The transform $(-)^{\wedge}$ preserves holonomicity, but not regular singularities.

\subsubsection{}The transform $(-)^{\wedge}$ is a familiar one in terms of the formalism of \S\ref{s:laplace}.
Namely, consider the diagram:
\[\xymatrix{
& X \times \GG_a \times \GG_a\ar[ld]_{p_{12}}\ar[rd]^{p_{13}} \ar[rr]^{m\colon (x,y,z)\mapsto yz} & & \GG_a \\
X \times \GG_a & & X \times \GG_a
}\]
where $p_{12}, p_{13}$ are projections onto the indicated factors. Recall from \S\ref{s:defL} that $\cL_{\lambda} \in D^b_h(\GG_a), \lambda\in \CC$, is defined by requiring $\cL_{\lambda}[1]$ to be the D-module on $\GG_a=\Spec(\CC[t])$ given by:
\[\Gamma(\cL_{\lambda}[1]) = \CC[t], \qquad \partial_t\cdot p(t) = p'(t) -\lambda p(t). \]
\begin{lemma}[{\cite[Lemme 7.1.4]{KL}}]\label{wedgelemma}
We have a canonical isomorphism:
\[ K^{\wedge} \simeq \DD(p_{13!}(p_{12}^*\DD K\otimes m^*\cL_{-1})[1]).\] 
\end{lemma}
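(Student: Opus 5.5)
Since $\DD$ is an involution and $\DD\cL_{-1}\simeq\cL_{1}[2]$ by \eqref{eq:dualofclambda}, I will first move the dualities to the other side. Using $\DD p_{13!}\simeq p_{13*}\DD$, $\DD p_{12}^*\simeq p_{12}^!\DD$, and $\DD(A\otimes B)\simeq \DD A\otimes^!\DD B$, the right-hand side becomes
\[ \DD\bigl(p_{13!}(p_{12}^*\DD K\otimes m^*\cL_{-1})[1]\bigr)\simeq p_{13*}\bigl(p_{12}^!K\otimes^! m^!\cL_{1}[2]\bigr)[-1]. \]
Because $p_{12}$ and $m$ (away from trouble, and $\cL_1$ is a line bundle with connection) are handled by smooth pullbacks, the shifts $p_{12}^!\simeq p_{12}^*[2]$ and $\otimes^!\simeq\otimes[-2]$ (for tensoring with the smooth object $m^*\cL_1[1]$, which as a D-module is $\cO$ with twisted connection) collapse this to $p_{13*}(p_{12}^*K\otimes m^*\cL_1)[c]$ for an explicit $c$. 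I would then check $c$ so that the underlying D-module $p_{12}^\bullet K\otimes_{\cO} m^\bullet(\cL_1[1])$ is pushed forward with the standard de Rham shift. So the lemma reduces to a $\ast$-pushforward formula for the kernel $e^{-yz}$, which is the form amenable to explicit computation.

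Next I localize. Both sides are compatible with restriction to opens of $X$, and for singular $X$ with closed embeddings into smooth varieties via Kashiwara's equivalence (both sides commute with $i_*$ by base change). So I may assume $X=U$ is smooth affine and $K$ is a single D-module, by t-exactness/dévissage, once the isomorphism is constructed naturally. Write $M=\Gamma(K)$ with coordinate $y$ on the middle factor and $z$ on the target factor. The D-module $p_{12}^\bullet K\otimes m^\bullet\cL_1[1]$ has global sections $M[z]$, with $\partial_z$ acting as $\partial_z - y$ and $\partial_y$ acting as $\partial_y - z$. Signs follow from $\partial_t p=p'-\lambda p$ with $\lambda=1$ and $m=yz$. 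The pushforward along $p_{13}$ (forgetting $y$) is computed by the relative de Rham complex
\[ M[z]\xrightarrow{\ \partial_y - z\ } M[z] \]
in degrees $-1,0$.

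The key computation has two parts. First, $\partial_y-z$ is injective on $M[z]$, by looking at the top $z$-degree. Second, the map $M\to \mathrm{coker}$ is an isomorphism, since every $m z^n$ is congruent to $(\partial_y^n m)\cdot 1$. So the pushforward is concentrated in one degree with sections $M$. On the cokernel, $z$ acts as $\partial_y$ and the induced $\partial_z$ acts as $-y$, because $\partial_z$ commutes with $z$ and on $M\cdot 1$ gives $-y$. The $\cD_U$-action is unchanged. This is exactly the defining description of $K^{\wedge}$: $t\mapsto\partial_t$ and $\partial_t\mapsto -t$. The isomorphism is manifestly natural in $K$ and compatible with restriction to smaller affines, so it glues to a canonical isomorphism on general $X$ and extends to complexes.

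The main obstacle I expect is bookkeeping rather than substance. It has three parts: getting the shift $c$ and the degree of the de Rham complex right under the paper's normalization (table of functors), confirming that the sign of $\lambda$ in $\cL_{-1}$ versus $\cL_1$ produces $\partial_t\mapsto -t$ rather than $+t$ (otherwise one is off by $a^*$), and ensuring that the identification $\DD(A\otimes B)\simeq\DD A\otimes^!\DD B$ is applied in the irregular holonomic category. For the latter I would cite the standard holonomic duality formalism (\cite{HTT}), since no regularity is used there.
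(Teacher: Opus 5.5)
Your proposal is correct, but at the decisive step it takes a different route from the paper.

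\textbf{How the paper argues.} The paper's proof is your first paragraph run backwards. It quotes the Katz--Laumon formula $K^{\wedge}\simeq p_{13*}(p_{12}^!K\,\tilde\otimes\, m^!\cL_1)[1]$, with $\tilde\otimes=\DD(\DD(-)\otimes\DD(-))$. It then converts this using $\DD p_{13!}\simeq p_{13*}\DD$, $\DD p_{12}^*\simeq p_{12}^!\DD$ and $\DD\cL_1\simeq\cL_{-1}[2]$. All of the substance is delegated to the citation.

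\textbf{How you argue.} You prove that formula directly. You compute $p_{13*}$ of $N=p_{12}^{\bullet}K\otimes_{\cO}m^{\bullet}(\cL_1[1])$ by the two-term relative de Rham complex $M[z]\xrightarrow{\partial_y-z}M[z]$. The following steps all check out:
\begin{enumerate}
\item injectivity of $\partial_y-z$ via the top $z$-degree;
\item the bijection $M\to\mathrm{coker}$ via $[z^nv]=[\partial_y^nv]$;
\item the induced actions $z\mapsto\partial_y$ and $\partial_z\mapsto -y$.
\end{enumerate}
They confirm that the $\cL_{-1}$ in the statement yields exactly the paper's convention $\partial_t\mapsto -t$, with no stray $a^*$.

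\textbf{What each route buys.} The citation gives brevity, and canonicity on arbitrary $X$, for free. Your route is self-contained and checks precisely the sign that a pure notation-unwinding could get wrong. The price is the reduction to smooth affine $X$ and the passage to complexes. The latter is harmless: $p_{13}$ is affine and the relative de Rham complex is a global chain-level model, so no gluing of isomorphisms in derived categories is needed.

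\textbf{Correction to the shift bookkeeping.} The individual shifts you quote are off. On $Z=X\times\GG_a\times\GG_a$ one has:
\begin{enumerate}
\item $m^!\cL_1\simeq m^*\cL_1[2(\dim X+1)]$;
\item $A\otimes^!B\simeq A\otimes B[-2(\dim X+2)]$ for lisse $B$;
\item the D-module $\cO e^{-yz}$ is $m^*\cL_1[\dim X+2]$, not $m^*\cL_1[1]$.
\end{enumerate}
Only the combination $A\otimes^! m^!\cL_1\simeq A\otimes m^*\cL_1[-2]$ is a $[-2]$. This forces $c=1$. Then $p_{12}^*K\otimes m^*\cL_1\simeq N[-1]$, so the right-hand side is $p_{13*}N$, which is exactly your complex in degrees $-1,0$.

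Note that $c$ is dictated by the formalism. It has to be computed, not chosen to fit the de Rham normalization; here it happens to fit.
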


\begin{proof}
We just need to unwind the notation in \cite[Lemme 7.1.4]{KL}. According to that result (note: our conventions for D-module functors match those of \cite{KL}, and our $\cL_{1}$ is their $L$):
\[ K^{\wedge} \simeq p_{13*}(p_{12}^!K \tilde\otimes m^!\cL_1)[1], \]
where $(-)\tilde\otimes (-) = \DD(\DD(-)\otimes\DD(-))$.
Thus,
\begin{align*}
K^{\wedge} &\simeq p_{13*}\DD(\DD(p_{12}^!K) \otimes \DD(m^!\cL_1))[1] \\
&\simeq \DD(p_{13!}(p_{12}^*\DD K \otimes m^*\DD \cL_1))[1] \\
&\simeq \DD(p_{13!}(p_{12}^*\DD K \otimes m^*\cL_{-1}[2]))[1] & \text{(by \eqref{eq:dualofclambda})} \\
&\simeq \DD(p_{13!}(p_{12}^*\DD K \otimes m^*\cL_{-1})[1]). 
\end{align*}
\end{proof}

\subsubsection{}
For $\lambda\in \CC$, recall that $i_{\lambda}\colon X \to X\times \GG_a$ denotes the map given by:
\[ i_{\lambda}(x) = (x,\lambda).\]

\begin{prop}\label{prop:wedgereal}
Let $K\in \Dhol(X\times \GG_a)$. Then there is a canonical isomorphism:
\[ i_{\lambda}^! K^{\wedge}[1] \simeq \DD(\pi_{X!}(\DD K \otimes \mu^* \cL_{-\lambda})) \]
\end{prop}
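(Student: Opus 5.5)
We start from Lemma \ref{wedgelemma}, which gives
\[ K^{\wedge} \simeq \DD\bigl(p_{13!}(p_{12}^*\DD K\otimes m^*\cL_{-1})[1]\bigr). \]
Applying $i_\lambda^!$ and using $i_\lambda^!\DD \simeq \DD i_\lambda^*$, we get
\[ i_\lambda^!K^{\wedge}[1] \simeq \DD\bigl(i_\lambda^* p_{13!}(p_{12}^*\DD K\otimes m^*\cL_{-1})\bigr). \]
The shift works out because $\DD(M[1]) = (\DD M)[-1]$, and the outer $[1]$ cancels it. It therefore suffices to produce a canonical isomorphism
\[ i_\lambda^* p_{13!}(p_{12}^*\DD K\otimes m^*\cL_{-1}) \simeq \pi_{X!}(\DD K\otimes \mu^*\cL_{-\lambda}). \]

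This is a proper base change computation. The square with top arrow $X\times\GG_a \to X\times\GG_a\times\GG_a$, $(x,y)\mapsto (x,y,\lambda)$, right vertical arrow $p_{13}$, bottom arrow $i_\lambda$, and left vertical arrow $\pi_X$ is cartesian. Base change identifies the left side with
\[ \pi_{X!}\bigl(\DD K\otimes (m\circ \iota)^*\cL_{-1}\bigr), \]
where $\iota$ denotes the top arrow. Here we use that $*$-pullback is monoidal for $\otimes$ and that $p_{12}\circ\iota=\id$. The composite $m\circ\iota$ is $(x,y)\mapsto \lambda y$, that is, $[\lambda]\circ\mu$ with $[\lambda]\colon\GG_a\to\GG_a$ multiplication by $\lambda$.

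The one non-formal input is the identification $[\lambda]^*\cL_{-1}\simeq \cL_{-\lambda}$ in $\Dhol(\GG_a)$. For $\lambda\neq 0$, $[\lambda]$ is an isomorphism, so $[\lambda]^*$ is the naive pullback of D-modules, with no shift issue since $\cL_{-1}[1]$ is a module and $[\lambda]$ is étale. We then check directly on $\cD/\cD(\partial_t - 1)$: the substitution $t\mapsto \lambda t$ sends $\partial_t$ to $\lambda^{-1}\partial_t$, so the generator satisfies $(\lambda^{-1}\partial_t-1)f=0$, i.e.\ $(\partial_t-\lambda)f=0$, which is $\cL_{-\lambda}[1]$. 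For $\lambda=0$, $[0]$ factors through the point. Then $[0]^*\cL_{-1}$ is the pullback of $i_0^*\cL_{-1}\simeq\CC$ (by the computation in Proposition \ref{prop:realkernelproperties}(ii), which works for any nonzero parameter), hence is $\const{\GG_a}=\cL_0$. So the formula holds uniformly in $\lambda\in\CC$.

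The main obstacle is bookkeeping rather than substance. We must make sure the sign conventions of the partial Fourier transform and of Lemma \ref{wedgelemma} indeed produce $\cL_{-\lambda}$ rather than $\cL_{\lambda}$. We must also check that $i_\lambda^!\DD\simeq\DD i_\lambda^*$ holds in $\Dhol$, which is standard for holonomic D-modules. Canonicity follows since every step is a canonical base change or monoidality isomorphism.
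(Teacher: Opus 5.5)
Your proposal is correct and is essentially the paper's proof: Lemma \ref{wedgelemma}, then $i_\lambda^!\circ\DD\simeq\DD\circ i_\lambda^*$, then proper base change along $(x,t)\mapsto(x,t,\lambda)$ using $p_{12}\circ\iota=\id$ and $m\circ\iota=[\lambda]\circ\mu$, and finally $[\lambda]^*\cL_{-1}\simeq\cL_{-\lambda}$. You also verify this last isomorphism explicitly, including the degenerate case $\lambda=0$ via $[0]=i_0\circ\pi$, which the paper asserts without comment.
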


\begin{proof}Let $\tilde i_{\lambda}\colon X\times \GG_a\to X \times\GG_a\times\GG_a$ denote the map $(x,t) \mapsto (x,t,\lambda)$, and write $[\lambda]\colon \GG_a \to \GG_a$ for the map $t\mapsto \lambda t$. Then:
\begin{align*}
i_{\lambda}^! K^{\wedge}[1] &\simeq i_{\lambda}^! \DD (p_{13!}(p_{12}^*\DD K \otimes m^*\cL_{-1})) & \text{(Lemma \ref{wedgelemma})} \\
&\simeq \DD(i_{\lambda}^*p_{13!}(p_{12}^* \DD K \otimes m^* \cL_{-1})) & \text{($i_{\lambda}^! \circ \DD \simeq \DD \circ i_{\lambda}^*$)}\\
&\simeq \DD(\pi_{X!}\tilde i_{\lambda}^*(p_{12}^* \DD K \otimes m^* \cL_{-1})) & \text{(proper base change)} \\
&\simeq \DD(\pi_{X!}(\DD K \otimes \mu^*[\lambda]^*\cL_{-1})) & \text{($p_{12}\circ \tilde i_{\lambda} = \id_{X\times\GG_a}$ and $m \circ \tilde i_{\lambda}= [\lambda]\circ \mu$)}\\
&\simeq \DD(\pi_{X!}(\DD K \otimes \mu^*\cL_{-\lambda})) & \text{($[\lambda]^*\cL_{-1}\simeq \cL_{-\lambda}$)}.
\end{align*}
\end{proof}

\begin{prop}\label{prop:wedgepush}
For $K\in D^b_h(X\times \GG_a)$, we have a canonical isomorphism:
\[ i_{\lambda}^*K^{\wedge}[-1] \simeq \pi_{X!}(K\otimes \mu^*\cL_{\lambda}).\]
\end{prop}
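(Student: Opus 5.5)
Apply Proposition \ref{prop:wedgereal} to $\DD K$ in place of $K$, with $-\lambda$ in place of $\lambda$, and then dualize. That proposition gives
\[ i_{-\lambda}^!(\DD K)^{\wedge}[1] \simeq \DD\bigl(\pi_{X!}(K\otimes \mu^*\cL_{\lambda})\bigr), \]
using $\DD\DD K\simeq K$. Applying $\DD$ to both sides, and using $\DD\circ i^!_{-\lambda}\simeq i^*_{-\lambda}\circ\DD$, we obtain
\[ \pi_{X!}(K\otimes\mu^*\cL_{\lambda}) \simeq i_{-\lambda}^*\,\DD\bigl((\DD K)^{\wedge}\bigr)[-1]. \]
So it remains to identify $\DD((\DD K)^{\wedge})$ with a pullback of $K^{\wedge}$.

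For this, use \eqref{eq:wedgedual} applied to $\DD K$. It reads $\DD((\DD K)^{\wedge})\simeq a^*((\DD\DD K)^{\wedge})\simeq a^*(K^{\wedge})$, where $a(x,t)=(x,-t)$. Since $a\circ i_{-\lambda}=i_{\lambda}$, we get $i_{-\lambda}^*a^*K^{\wedge}\simeq i_{\lambda}^*K^{\wedge}$. Combining the two displays gives the claimed isomorphism $i_\lambda^*K^{\wedge}[-1]\simeq \pi_{X!}(K\otimes\mu^*\cL_\lambda)$.

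The only point needing care is sign and shift bookkeeping. The convention $\partial_t\mapsto -t$ in the definition of $(-)^{\wedge}$ determines whether the pullback is at $\lambda$ or at $-\lambda$. The shift $[1]$ in Proposition \ref{prop:wedgereal} turns into $[-1]$ under duality. I would double-check the sign against the case $K=\pi_X^*L$, where both sides vanish for $\lambda\neq 0$ since $\pi_!\cL_\lambda=0$; this case is consistent with Proposition \ref{prop:wedgepull} and gives no sign information. A finer check is $K=(\id_X\times i_0)_*\const{X}$. Here $K^{\wedge}$ is $\pi_X^*$ of something, so both sides are constant in $\lambda$ and equal to $\const{X}$ up to the shift, by Proposition \ref{prop:realkernelproperties}(ii). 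If a sign discrepancy appeared, I would instead argue directly from Lemma \ref{wedgelemma} combined with \eqref{eq:wedgedual}, via the base change along $\tilde i_\lambda$ used in the proof of Proposition \ref{prop:wedgereal}.
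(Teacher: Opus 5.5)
Your argument is correct and is essentially the paper's proof: both apply Proposition \ref{prop:wedgereal} to $\DD K$ at $-\lambda$, dualize, and combine \eqref{eq:wedgedual} with $a\circ i_{-\lambda}=i_{\lambda}$. The only cosmetic difference is that you apply \eqref{eq:wedgedual} to $\DD K$, giving $\DD((\DD K)^{\wedge})\simeq a^*K^{\wedge}$ directly, whereas the paper applies it to $K$ and then uses $a^*=a^!$ to move $a$ past $i^!_{-\lambda}$; the sign checks in your last paragraph are unnecessary, since the chain of isomorphisms already fixes the sign.
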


\begin{proof}We have:
\begin{align*}
\pi_{X!}(K \otimes \mu^*\cL_{\lambda}) &\simeq \pi_{X!} (\DD\DD K \otimes \mu^*\cL_{\lambda}) & \text{(as $\DD\circ \DD \simeq \id$ on $\Dhol(X\times \GG_a)$)}\\
&\simeq \DD(i_{-\lambda}^!(\DD K)^{\wedge})[-1] & \text{(Proposition \ref{prop:wedgereal})}\\
&\simeq \DD(i_{-\lambda}^!a^*(\DD(K^{\wedge}))) [-1] & \text{(by \eqref{eq:wedgedual})}\\
&\simeq \DD(i_{\lambda}^!\DD(K^{\wedge}))[-1] & \text{($a^*=a^!$ and $a\circ i_{-\lambda} = i_{\lambda}$)}\\
&\simeq i_{\lambda}^*K^{\wedge}[-1].
\end{align*}
\end{proof}

\begin{prop}\label{prop:wedgepull}
For $K\in D^b_h(X)$, we have a canonical isomorphism:\[ i_{0*}K[-1] \simeq (\pi_{X}^*K)^{\wedge}.\]
\end{prop}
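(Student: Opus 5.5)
We plan to reduce to the case $X=\Spec(\CC)$ style computation via Lemma \ref{wedgelemma}, or more directly via the local description of $(-)^{\wedge}$. The cleanest route is to use Lemma \ref{wedgelemma} applied to $\pi_X^*K$. We have $\DD\pi_X^*K\simeq \pi_X^!\DD K\simeq \pi_X^*\DD K[2]$, so
\[ (\pi_X^*K)^{\wedge}\simeq \DD\bigl(p_{13!}(p_{12}^*\pi_X^*\DD K\otimes m^*\cL_{-1})[3]\bigr). \]
Now $\pi_X\circ p_{12}=\pi_X\circ p_{13}$ (both are the projection $X\times\GG_a\times\GG_a\to X$), so $p_{12}^*\pi_X^*\DD K\simeq p_{13}^*\pi_X^*\DD K$. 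The projection formula then gives
\[ p_{13!}(p_{13}^*\pi_X^*\DD K\otimes m^*\cL_{-1})\simeq \pi_X^*\DD K\otimes p_{13!}m^*\cL_{-1}. \]

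The key step is the computation of $p_{13!}m^*\cL_{-1}$, where $m(x,y,z)=yz$. This is pulled back from $X$-free data: by proper base change it equals $\rho^*(\mathrm{pr}_{2!}\,\mathrm{mult}^*\cL_{-1})$, where $\rho\colon X\times\GG_a\to\GG_a$ is the projection and $\mathrm{pr}_2\colon \GG_a\times\GG_a\to\GG_a$.

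We then compute the fibres of $\mathrm{pr}_{2!}\,\mathrm{mult}^*\cL_{-1}$ by base change.
\begin{itemize}
\item At $z\neq 0$, the fibre is $\pi_!([z]^*\cL_{-1})=\pi_!\cL_{-z}=0$ by Proposition \ref{prop:realkernelproperties}(i).
\item At $z=0$, the fibre is $\pi_!\const{\GG_a}=\CC[-2](-1)$.
\end{itemize}
To upgrade this from a fibrewise statement to an isomorphism of objects, we note that the restriction to $\GG_m$ vanishes. Indeed, the restriction is computed via the $\GG_m$-equivariance $(y,z)\mapsto(yz,z)$, which is an automorphism of $\GG_a\times\GG_m$; it reduces to $\cL_{-1}\boxtimes\const{\GG_m}$ pushed along $\mathrm{pr}_2$, which is $0$ by Künneth and Proposition \ref{prop:realkernelproperties}(i). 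Hence the object is $i_{0*}$ of its costalk/stalk, giving $\mathrm{pr}_{2!}\,\mathrm{mult}^*\cL_{-1}\simeq i_{0*}\CC[-2]$, and therefore $p_{13!}m^*\cL_{-1}\simeq i_{0*}\const{X}[-2]$ (with $i_0\colon X\to X\times\GG_a$).

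Assembling these,
\[ (\pi_X^*K)^{\wedge}\simeq \DD\bigl(\pi_X^*\DD K\otimes i_{0*}\const X[1]\bigr)\simeq \DD(i_{0*}\DD K[1])\simeq i_{0*}K[-1], \]
using the projection formula $\pi_X^*\DD K\otimes i_{0*}\const X\simeq i_{0*}(i_0^*\pi_X^*\DD K)=i_{0*}\DD K$, together with $\DD i_{0*}=i_{0*}\DD$ and $\DD\DD\simeq\id$.

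The main obstacle is keeping the shifts honest, in particular the normalization of $\otimes$ for D-modules. A sanity check is the local description: for $X$ affine and $K=\cO_X$, $\pi_X^*K[1]$ corresponds to $\cO_X[t]$, whose transform has $t$ acting as $\partial_t$ (killing $1$) and $\partial_t$ acting as $-t$, i.e. the delta module $i_{0*}\cO_X$. This confirms that the shift is $[-1]$ in degree terms. If the shift bookkeeping above disagrees, we would instead prove the statement directly from this local description, checking it on modules via t-exactness of both sides and gluing by canonicity.
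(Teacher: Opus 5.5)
Your proof is correct, and the shifts check out. After the projection formula you have $\DD\bigl(\pi_X^*\DD K\otimes p_{13!}m^*\cL_{-1}[3]\bigr)$. The kernel identity $p_{13!}m^*\cL_{-1}\simeq i_{0*}\const{X}[-2]$ holds: the shear $(y,z)\mapsto(yz,z)$ over $\GG_m$ makes the vanishing away from $0$ an honest statement about the object, not merely a fibrewise one, and the stalk at $0$ uses $i_0^*\cL_{-1}=\CC$. The resulting net shift $[1]$ inside $\DD$ becomes the $[-1]$ of the statement.

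This is a genuinely different route from the paper's, which computes no kernel. The paper applies Proposition \ref{prop:wedgepush} at $\lambda=0$, where $\cL_0=\const{\GG_a}$. This identifies $i_0^*(-)[1]$, the left adjoint of $i_{0*}(-)[-1]$, with $\pi_{X!}((-)^{\wedge})[2]$. It then takes right adjoints, using that $(-)^{\wedge}$ is an equivalence (inverted by $a^*(-)^{\wedge}$, a harmless sign since $a^*i_{0*}=i_{0*}$) and that $\pi_X^![-2]\simeq\pi_X^*$. Uniqueness of adjoints finishes the proof.

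The trade-offs are these. The paper's argument is purely formal and needs only the trivial case $\lambda=0$ of the exponential kernel. Yours needs the genuinely exponential inputs $\pi_!\cL_{\lambda}=0$ and $i_0^*\cL_{\lambda}=\CC$ of Proposition \ref{prop:realkernelproperties}, which the paper leaves to the reader. In exchange, your route yields an explicit kernel identity, essentially a D-module counterpart of the orthogonality Proposition \ref{orthogonality}, and never needs the inverse of $(-)^{\wedge}$.

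Your fallback via the local description is the ``directly from the definition'' proof that the paper mentions but does not write out.
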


\begin{proof}
This can be proved directly from the definition of $(-)^{\wedge}$. Alternatively, observe that $i_{0*}(-)[-1]$ is the right adjoint to $i_0^*(-)[1]$. But by Proposition \ref{prop:wedgepush}, the latter is isomorphic to $\pi_{X!}(-)^{\wedge}[2]$ and this has $(\pi_X^!(-))^{\wedge}[-2] \simeq \pi_X^*(-)^{\wedge}$ as right adjoint. So we are done by the uniqueness of adjoints (Yoneda lemma).
\end{proof}

\begin{prop}\label{prop:weaknonchar}
Let $\lambda\in \CC^{\times}$, and let $K$ be a regular holonomic D-module on $X\times \GG_a$. Then $i_{\lambda}^!K^{\wedge}[1]$ is a holonomic D-module (i.e., its cohomology sheaves are concentrated in degree $0$).
\end{prop}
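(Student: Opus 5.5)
The plan is to reduce the statement to an injectivity assertion about $\partial_t-\lambda$, and then rule out a nonzero kernel by a regularity argument.

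\textbf{Reduction to $X$ smooth and affine.} The question is local on $X$. By Kashiwara's equivalence, using a closed embedding of an affine open of $X$ into a smooth affine variety, we may assume $X=U$ is smooth and affine. The partial Fourier transform only acts in the $\GG_a$-direction, so it commutes with pushforward along closed embeddings $U\times\GG_a\hookrightarrow U'\times\GG_a$. So does $i_\lambda^!$, by base change. Now take $K$ a regular holonomic module on $U\times\GG_a$, and put $M=K^{\wedge}$, which is a holonomic module since $(-)^{\wedge}$ is t-exact. For the smooth divisor $U\times\{\lambda\}$, the functor $i_\lambda^!M[1]$ is computed by the Koszul-type complex
\[ [\Gamma(M)\mapright{t-\lambda}\Gamma(M)] \]
in degrees $-1,0$, exactly as in the computation in the proof of Proposition \ref{prop:realkernelproperties}. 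So it only has $H^{-1}$ and $H^0$, and we must show $H^{-1}=\ker(t-\lambda\,|\,\Gamma(M))=0$. By the definition of $(-)^{\wedge}$, this is the kernel of $\partial_t-\lambda$ acting on $\Gamma(U\times\GG_a,K)$.

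\textbf{Turning a kernel into a subobject.} Suppose $N=\ker(t-\lambda)\subset \Gamma(M)$ is nonzero. The sections of $M$ killed by $t-\lambda$ generate a nonzero submodule of $M$ supported on $U\times\{\lambda\}$. By Kashiwara's equivalence this submodule is $i_{\lambda*}N'$ for some nonzero holonomic $\cD_U$-module $N'$. Applying the inverse Fourier transform, $K\simeq a^*M^{\wedge}$ contains the subobject $(i_{\lambda*}N')^{\wedge}$ pulled back by $a$. Transporting Proposition \ref{prop:wedgepull} by the translation $t\mapsto t-\lambda$ (equivalently, tensoring with $\mu^*\cL_{\pm\lambda}$ on the other side), this subobject is $\pi_U^*N'\otimes\mu^*\cL_{\pm\lambda}$ up to shift. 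Regular holonomic modules are closed under subobjects, so this module would have to be regular.

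\textbf{The main obstacle: showing this subobject is irregular.} I expect the crux to be showing that $\pi_U^*N'\otimes\mu^*\cL_{\pm\lambda}$ is irregular when $N'\neq0$ and $\lambda\neq0$. I would first choose a point $x$ in the open dense subset of the support of $N'$ over which $N'$ is smooth (a connection). Restricting to the line $\{x\}\times\GG_a$, or to a transversal slice, gives a nonzero direct sum of copies of $\cL_{\pm\lambda}$, up to shift. Such a sum has an irregular singularity at $\infty$ of slope $1$, because $\lambda\neq0$. Since restriction to a generic smooth slice preserves regularity, this contradicts the regularity of $K$. Hence $N=0$, so $i_\lambda^!K^{\wedge}[1]$ is concentrated in degree $0$.

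The main point needing care is the non-smooth support case: one has to make sure that restricting to the generic smooth locus of $\mathrm{supp}\,N'$ and then to a fibre is a legitimate regularity-preserving operation. Alternatively, one can invoke directly the slope characterization cited in the proof of Proposition \ref{prop:conservative}: for regular input, $K^{\wedge}$ has no singularities off $X\times\{0\}$, so $\ker(t-\lambda)$ cannot carry a submodule supported on $X\times\{\lambda\}$.
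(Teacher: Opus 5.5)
Your proof is correct, but after the common reduction to injectivity of $\partial_t-\lambda$ on $\Gamma(K)$ it takes a different route from the paper.

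The paper argues directly. It splits $K$ by $0\to T\to K\to K[t^{-1}]$ and disposes of the $t$-torsion $T$ with the identity $t^{d+1}(\partial_t-\lambda)v=-(d+1)t^dv$. On $K[t^{-1}]$ it writes $\partial_t-\lambda=-u^2\partial_u-\lambda$ with $u=t^{-1}$. Since $-u^2\partial_u$ raises the index of the Kashiwara--Malgrange $V$-filtration at $u=0$ by one, a kernel vector lies in $\bigcap_k V^{\alpha+k}$, and this intersection is zero because the $V$-filtration of a regular module is separated.

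You argue by contradiction instead. A nonzero kernel $N'$ spans a subobject $\sum_k t^kN'\simeq N'\boxtimes\cL_{-\lambda}[1]$ of $K$; this is visible via Kashiwara's equivalence on the Fourier side, or directly on the $K$ side. Such a subobject is irregular at $\infty$, which contradicts regularity of $K$.

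What your route buys is conceptual clarity. It identifies the obstruction exactly: $H^{-1}(i_\lambda^!K^{\wedge}[1])=\ker(\partial_t-\lambda)$ is the $\cD_X$-module $N'$ for which $N'\boxtimes\cL_{-\lambda}[1]$ is the largest subobject of $K$ of that exponential form. This also explains the counterexample $K=\cL_{-\lambda}$ in the remark closing Appendix \ref{s:forgetsupports}. The price is heavier general input: stability of regular holonomicity under subobjects and under arbitrary inverse images, together with irregularity of $\cL_{-\lambda}$ for $\lambda\neq0$. The paper needs only the $V$-filtration and its separatedness for regular modules.

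The point you flag as needing care is not actually an issue. Pick any point $x$ with $x^!N'\neq 0$. Then $(x\times\id_{\GG_a})^!(N'\boxtimes\cL_{-\lambda}[1])\simeq x^!N'\otimes_{\CC}\cL_{-\lambda}[1]$. Since $(x\times\id_{\GG_a})^!$ preserves regular holonomic complexes for any morphism, no genericity or transversal slice is required.

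I would drop your closing alternative via the slope statement quoted in Proposition \ref{prop:conservative}. That statement is only fibrewise, and it would need further argument to rule out a submodule supported on $X\times\{\lambda\}$.
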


\begin{proof}The claim is local on $X$, so we may assume $X$ is affine. Embedding $X$ into a smooth variety ($\AA^n$, say), we may further assume $X$ is smooth and identify $K$ and $K^{\wedge}$ with their global sections.
Let $t$ denote the coordinate for $\GG_a$. Then $i_{\lambda}^!K^{\wedge}[1]$ is represented by the complex $[K^{\wedge} \mapright{t-\lambda} K^{\wedge}]$ concentrated in degrees $0$ and $-1$. Under the identification with global sections, the action of $t$ on $K^{\wedge}$ becomes the action of $\partial_t$ on $K$. Hence, we need to show $\partial_t - \lambda$ is injective as an operator on $K$. Let 
\[ T = \{v\in K \mid t^n v = 0 \text{ for some $n$}\}. \]
Then we have an exact sequence:
\[ 0\to T \to K \to K[t^{-1}], \]
where $K[t^{-1}]$ denotes the localization of $K$ at $t$.
So it suffices to show $\partial_t-\lambda$ is injective on $T$ and $K[t^{-1}]$.

Let $v\in T$ be non-zero and set $d=\max\{k \mid t^k v \neq 0\}$. Then:
\[ t^{d+1}(\partial_t - \lambda)v = \partial_tt^{d+1}v - (d+1)t^dv - \lambda t^{d+1}v = -(d+1)t^d v \neq 0. \]
I.e., $\partial_t-\lambda$ is injective on $T$.

For $K[t^{-1}]$, set $u = t^{-1}$ and $\partial_u = -t^2\partial_t$, so that $[\partial_u, u]=1$ and $\partial_t = -u^2\partial_u$. Write $M$ for $K[t^{-1}]$ regarded as a $\Gamma(\cD_{X\times \Spec(\CC[u])})$-module. 
Then our problem becomes: show $-u^2\partial_u - \lambda$ is injective on $M$. As $K$ is regular holonomic, so is $M$. Since $M$ is holonomic, we have the Kashiwara-Malgrange $V$-filtration \cite{Kas} at $u =0$. This is an exhaustive, decreasing filtration $V^{\alpha}M$, indexed by $\CC$, such that:
\begin{enumerate}
\item $u\cdot V^{\alpha}M \subset V^{\alpha+1}M$;
\item $\partial_u \cdot V^{\alpha}M \subset V^{\alpha - 1}M$.
\end{enumerate}
Thus,
\[ -u^2\partial_u\cdot V^{\alpha}M \subset V^{\alpha +1}M.\]
Let $v\in \ker(-u^2\partial_u - \lambda)$. As the $V$-filtration is exhaustive, $v\in V^{\alpha}M$ for some $\alpha$.
Then, as $\lambda \in \CC^{\times}$, we must have $v\in V^{\alpha + 1}M$.
Consequently, $v\in \bigcap_{k\in\ZZ_{\geq 0}} V^{\alpha +k}M$.
But as $M$ is regular, the $V$-filtration is also separated.\footnote{This can fail for irregular D-modules. See \cite[Example 2.10.22]{Bj}.} Consequently, $v=0$.
\end{proof}

\section{Forget supports isomorphism}\label{s:forgetsupports}
\subsubsection{}
This appendix gives an independent proof of a stronger version of Proposition \ref{prop:weaknonchar}. This stronger result is used only in Theorem \ref{thm:commutesoperations}, to show that realizations commute with Verdier duality and the operations $f^!, f_*$. 

\subsubsection{}We will write $\Spec(\CC[u])\subset \PP^1$ for the standard affine chart at $\infty$, glued to $\GG_a = \Spec(\CC[t])$ along $\Spec(\CC[t,t^{-1}])$ by $u=t^{-1}$. So $\partial_u = -t^2\partial_t$ on the overlap. For a variety $X$, we write $j\colon X \times \GG_a \to X \times \PP^1$ for the evident inclusion. Note that $j$ is an affine open immersion, hence $j_*$ is t-exact. Write $i\colon X \times \{\infty\} \to X \times \PP^1$ for the complement.

\begin{lemma}
\label{lem:coherence}
Let $\lambda\in \CC^{\times}$, and let $K$ be a regular holonomic module on $X\times \GG_a$, with $X$ smooth and affine. 
Set $M = \Gamma(K\otimes \mu^*\cL_{\lambda})$.
Write $M[t^{-1}]$ for the localization of $M$ at $t$.
Let $\cA \subset \Gamma(\cD_{X\times\Spec(\CC[u])})$ be the subalgebra generated by $\Gamma(\cD_X), u$ and $u\partial_u$. 
Then:
\begin{enumerate}
\item $M[t^{-1}] \simeq \Gamma(X \times \Spec(\CC[u]), j_*(K\otimes \mu^*\cL_{\lambda}))$;
\item $M[t^{-1}]$ is finitely generated over $\cA$.
\end{enumerate}
\end{lemma}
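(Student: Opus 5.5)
For (i), I will use only that $j$ is an affine open immersion and that everything is quasi-coherent. Put $N = K\otimes\mu^*\cL_{\lambda}$. The open set $X\times\Spec(\CC[u])$ meets $X\times\GG_a$ in the affine open $X\times\Spec(\CC[t,t^{-1}])$. So $\Gamma(X\times\Spec(\CC[u]), j_*N) = \Gamma(X\times\Spec(\CC[t,t^{-1}]), N) = \Gamma(N)[t^{-1}] = M[t^{-1}]$, because sections of a quasi-coherent module over a principal open are the localization. The $\cD$-module structure on the $u$-side is the one induced by $\partial_u = -t^2\partial_t$, so this is an identification of $\Gamma(\cD_{X\times\Spec(\CC[u])})$-modules. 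This part is routine.

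For (ii), I will compare $M[t^{-1}]$ with the untwisted module $P = \Gamma(K)[t^{-1}]$. As vector spaces $M[t^{-1}] = P$, and $\Gamma(\cD_X)$ and $u$ act identically on both. Only $\partial_t$ changes, to $\partial_t - \lambda$. Hence $u\partial_u = -t\partial_t$ acts on $M[t^{-1}]$ as
\[ (u\partial_u)^{M} = (u\partial_u)^{P} + \lambda u^{-1}. \]
By (i) applied with $\lambda=0$, $P$ is the module of sections of $j_*K$ near $u=0$. This is regular holonomic because $K$ is, and $u$ acts invertibly on it. Take the Kashiwara--Malgrange $V$-filtration of $P$ along $u=0$ (as in the proof of Proposition \ref{prop:weaknonchar}). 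Each $V^{\alpha}P$ is finitely generated over $V^0\cD$, and $V^0\cD$ is exactly the algebra generated by $\Gamma(\cD_X)$, $u$, $u\partial_u$, i.e.\ $\cA$, acting through the $P$-structure. The filtration is stable under $u$, $(u\partial_u)^P$ and $\Gamma(\cD_X)$. Since $u$ is invertible, $P = \bigcup_k u^{-k}V^{\alpha}P$. Fix $\alpha$, a finite set $G$ of $\cA^P$-generators of $V^{\alpha}P$, and put $F_k = u^{-k}V^{\alpha}P$.

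The claim is that $S := \cA^{M}\cdot G$ is all of $M[t^{-1}]$, so $G$ is a finite generating set. The mechanism is the displayed identity. For $w\in F_k$ we have $(u\partial_u)^M w = \lambda u^{-1}w + (u\partial_u)^P w$ with $(u\partial_u)^P w\in F_k$. Since $\lambda\neq 0$, this gives $u^{-1}w \in \lambda^{-1}(u\partial_u)^M w + F_k$. So once $F_k\subset S$, we get $F_{k+1}\subset S$, and induction on $k$ finishes the proof.

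The main obstacle is the base case $F_0 = V^{\alpha}P\subset S$. Elements of $V^{\alpha}P$ are $\cA^P$-combinations of $G$, and rewriting $(u\partial_u)^P = (u\partial_u)^M - \lambda u^{-1}$ introduces $u^{-1}$ terms that seem to climb the filtration. I would first fix this with an order filtration: filter $\cA$ by degree in $u\partial_u$ and prove jointly, by induction on this degree $d$, that $Q^P g \in S + $ (elements reachable from lower degree) for $Q$ of degree $\le d$. To compensate the extra $u^{-1}$, I would use $u$ in $\cA$, replacing $G$ by $u^{N}G$ for large $N$ (which still generates some $V^{\alpha'}P$). If that bookkeeping resists, the fallback is to pass to the Rees/associated-graded picture. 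There $\mathrm{gr}_F$ of both modules coincide up to the invertible leading term $\lambda u^{-1}$, so a standard graded-lifting (Nakayama-type) argument for the exhaustive filtration $F_\bullet$ gives finite generation. Regularity enters exactly through the existence of a $V$-filtration with coherent $V^{\alpha}P$ exhausting $P$ after inverting $u$. For irregular $K$ this breaks, in line with the footnote to Proposition \ref{prop:weaknonchar}.
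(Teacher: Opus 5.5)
Part (i), the identity $(u\partial_u)^M=(u\partial_u)^P+\lambda u^{-1}$, and your step $F_k\subset S\Rightarrow F_{k+1}\subset S$ are correct and coincide with the paper's argument. The gap is the base case $V^{\alpha}P\subset S$, and it is not a bookkeeping issue: none of the inputs you use involves regularity.

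A $V$-filtration with the following two properties exists for every holonomic $P$ on which $u$ is invertible: its pieces are finitely generated over $\cA$ (acting through the $P$-structure), and $\bigcup_k u^{-k}V^{\alpha}P=P$. Statement (ii), however, is false for irregular $K$. Concretely, take $X=\Spec(\CC)$ and $K=\cL_{-\lambda}[1]$, the counterexample in the paper's closing remark.
\begin{itemize}
\item Then $P=\CC[u,u^{-1}]$ with $(u\partial_u)^P=u\frac{d}{du}-\lambda u^{-1}$, so $P\simeq\CC[u,u^{-1}]e^{\lambda/u}$.
\item Its $V$-filtration is the constant one, $V^{\alpha}P=P$. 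The set $G=\{1\}$ generates it over $\cA^P$, since $(u\partial_u)^P u^{-k}=-ku^{-k}-\lambda u^{-k-1}$.
\item But $(u\partial_u)^M=u\frac{d}{du}$, so $S=\cA^M\cdot 1=\CC[u]$ and $F_0=P\not\subset S$.
\end{itemize}
In fact $M[t^{-1}]=\CC[u,u^{-1}]$ is not finitely generated over $\cA$ at all, and replacing $G$ by $u^NG$ changes nothing. The Rees fallback fails for the same reason. Here $F_k=P$ for every $k$ and $\mathrm{gr}^F=0$. The filtration $F_\bullet$ is exhaustive but neither bounded below nor complete, so a Nakayama-type lifting along it would prove a false statement. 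In particular, your last sentence misidentifies where regularity enters.

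The missing idea is to use a lattice with two properties: it is finitely generated over the smaller algebra $\Gamma(\cD_X)[u]\subset\cA$, on which the $M$- and $P$-actions coincide, and it is stable under $(u\partial_u)^P$. This relative coherence is exactly what regularity at $u=0$ provides. It fails for $e^{\lambda/u}$: there $(u\partial_u)^P$ lowers the lowest power of $u$ occurring, so no nonzero finitely generated $\CC[u]$-submodule is stable. The paper takes such an $L\subset\Gamma(K)[t^{-1}]$, with $\bigcup_k u^{-k}L=\Gamma(K)[t^{-1}]$, from \cite[Proposition 2.14.4]{G}. The $\Gamma(\cD_X)[u]$-generators of $L$ then lie in $S$ and generate $L$ inside $S$. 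So the base case is immediate, and your inductive step is the paper's induction.

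One caveat when you implement this: the existence of such a lattice is really a statement near $u=0$. For $X=\Spec(\CC)$ and $K$ the delta module at $t=1$, every finitely generated $\CC[u]$-submodule of $\Gamma(K)$ is finite dimensional and $t$-stable, so none exhausts $\Gamma(K)$. Away from $u=0$, however, the sheaf of rings underlying $\cA$ is all of $\cD$ and finite generation is automatic. So the argument should be run locally near $u=0$.
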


\begin{proof}(i) is standard and essentially the corresponding statement for quasi-coherent sheaves in algebraic geometry: as $j$ is an affine open immersion, $j_*$ on D-modules is the ordinary (underived) sheaf theoretic pushforward. In other words,
\[ \Gamma(X \times \Spec(\CC[u]), j_*(K\otimes \mu^*\cL_{\lambda})) = \Gamma(X \times \Spec(\CC[t,t^{-1}]), K \otimes \mu^*\cL_{\lambda}).\]
As $X\times \Spec(\CC[t,t^{-1}])$ is the principal open $\{t\neq 0\}$ of the affine variety $X\times \GG_a$, this is $M[t^{-1}]$, compatibly with differential operators since derivations extend uniquely to localizations.

For (ii), we may identify $M=\Gamma(K\otimes \mu^*\cL_{\lambda})$ with $\Gamma(K)$. Under this identification the $\partial_t$ action on $\Gamma(K\otimes \mu^*\cL_{\lambda})$ corresponds to the action of $\partial_t - \lambda$ on $\Gamma(K)$. The action of $\Gamma(\cD_X)$ and $\cO_{X\times \GG_a}$ is unchanged. 
To disambiguate, denote the $\Gamma(\cD_{X\times \GG_a})$-action on $\Gamma(K)$ corresponding to that on $M$ by $\star$, and write $\cdot$ for the usual action.
So we may identify $M[t^{-1}]$ and the localization $\Gamma(K)[t^{-1}]$, with:
\[ u\partial_u \star v = (u\partial_u + \lambda u^{-1})\cdot v, \qquad v\in \Gamma(K)[t^{-1}]. \] 
As $K$ is regular holonomic, by \cite[Proposition 2.14.4]{G}, the module $\Gamma(K)[t^{-1}]$ admits a finitely generated $\Gamma(\cD_X)[u]$-submodule $L\subset \Gamma(K)[t^{-1}]$, such that:
\[ \Gamma(K)[t^{-1}] = \bigcup_{k\in \ZZ_{\geq 0}} u^{-k}L, \qquad u\partial_u \cdot L \subset L.\]
We will show that $u^{-k}L \subset \cA \star L$ for all $k\in \ZZ_{\geq 0}$. This will give the result, since it will imply $M[t^{-1}] = \Gamma(K)[t^{-1}] = \cA \star L$, and $L$ is finitely generated over $\Gamma(\cD_X)[u]\subset \cA$.

Proceed by induction on $k$. The case $k=0$ is trivial. 
Assume $u^{-k}L \subset \cA \star L$.
Now, for $x\in L$:
\[ u\partial_u \star (u^{-k}x) = -ku^{-k}x + u^{-k}(u\partial_u\star x) = -ku^{-k}x + u^{-k}(u\partial_u\cdot x) + \lambda u^{-(k+1)}x.\]
Hence:
\[ \lambda u^{-(k+1)}x = u\partial_u\star (u^{-k}x) + ku^{-k}x - u^{-k}(u\partial_u\cdot x) \]
By the induction hypothesis, the first two terms on the right are in $\cA\star L$. The last term is in $u^{-k}L$, since $u\partial_u\cdot L\subset L$. 
Thus, the induction hypothesis gives that it too is in $\cA\star L$.
As $\lambda\in \CC^{\times}$, we conclude $u^{-(k+1)}x \in \cA \star L$.
\end{proof}

\begin{thm}\label{thm:forgetsupports}
Let $\lambda\in \CC^{\times}$, and let $K$ be a regular holonomic D-module on $X\times \GG_a$. Then the canonical forget supports map:
\[ \pi_!(K\otimes \mu^*\cL_{\lambda}) \to \pi_*(K\otimes \mu^*\cL_{\lambda}) \]
is an isomorphism.
\end{thm}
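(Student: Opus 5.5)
Write $N = K\otimes \mu^*\cL_{\lambda}$; here $\pi$ denotes $\pi_X$. I would factor $\pi_X$ through $X\times\PP^1$: writing $\bar\pi\colon X\times\PP^1\to X$ (proper), we have $\pi_! = \bar\pi_* j_!$ and $\pi_* = \bar\pi_* j_*$. The forget supports map is $\bar\pi_*$ applied to the canonical map $j_!N\to j_*N$. The cone of $j_!N\to j_*N$ is supported on $X\times\{\infty\}$; up to the triangle $j_!N \to j_*N \to i_*i^*j_*N\to$, it equals $i_*i^*j_*N$. So it suffices to show $i^*j_*N = 0$, equivalently (by duality, since $\DD N \simeq \DD K\otimes\mu^*\cL_{-\lambda}[2]$ up to shift, again of the same form with $\DD K$ regular) $i^!j_*N=0$. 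Since $\bar\pi_*i_* = \id$, vanishing of this cone gives the isomorphism. The question is local on $X$, so we may assume $X$ is smooth and affine (embed in $\AA^n$ if necessary, using Kashiwara's theorem). We may also treat $K$ as a single regular holonomic module, since the statement is for modules.

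To show $i^!j_*N=0$, use Lemma \ref{lem:coherence}(i): $\Gamma(j_*N)$ on the chart $X\times\Spec(\CC[u])$ is $M[t^{-1}]$. Then $i^!j_*N$ is computed by the complex $[M[t^{-1}]\xrightarrow{u} M[t^{-1}]]$. Since $u$ is invertible on $M[t^{-1}]$, this complex is acyclic. Hence $i^!j_*N=0$ automatically, because $j_*N$ is a localized module. That is too easy and does not use the hypotheses, which signals that the real content is $i^*j_*N=0$, i.e. $j_!N \simeq j_*N$. Dually, via $i^*=\DD i^!\DD$ and $\DD j_* = j_!\DD$, I need $i^!j_!N'=0$ for $N'$ of the same type. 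Equivalently, $j_*N$ has no quotient or extension supported at $\infty$, i.e. $j_*N = j_{!*}N$ and moreover $j_!N=j_*N$.

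The plan is therefore to prove directly that $i^*j_*N = 0$. In the $u$-chart, with $\cD$-action, $i^*j_*N$ is computed by the de Rham-type complex $[M[t^{-1}]\xrightarrow{\partial_u} M[t^{-1}]]$ along $u$, viewed as a $\Gamma(\cD_X)$-module. Equivalently, one can use the $V$-filtration: $i^*$ of a module is controlled by $\mathrm{gr}_V^{\alpha}$ for $\alpha$ integral, via $\partial_u$ and $u$. The key input is Lemma \ref{lem:coherence}(ii): $M[t^{-1}]$ is finitely generated over $\cA=\langle \Gamma(\cD_X),u,u\partial_u\rangle$, i.e. over $V^0\cD$. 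Since $u$ is invertible, $u^{-k}$ of the generating lattice is also $V^0\cD$-stable. So $M[t^{-1}]$ itself serves as a $V^0$-lattice $U$ with $uU=U$. Then the Kashiwara–Malgrange filtration is constant: $V^{\alpha}=M[t^{-1}]$ for all $\alpha$, and all graded pieces $\mathrm{gr}_V^\alpha$ vanish. I would verify that a module finitely generated over $V^0\cD$ admits a good $V$-filtration with finitely many $b$-function roots. Here the $V$-filtration satisfies $uV^\alpha=V^{\alpha+1}$ and all $V^\alpha=M[t^{-1}]$, so nearby and vanishing cycles at $u=0$ vanish. This gives $i^*j_*N=0$ and $i^!j_*N=0$, hence $j_!N\simeq j_*N$. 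The main obstacle is this last step: producing from (ii) a genuine Kashiwara–Malgrange $V$-filtration (a $b$-function for the lattice $M[t^{-1}]$). I would try to use the $b$-function argument for the $V^0\cD$-coherent module $M[t^{-1}]$ with $u$ invertible: the operator $u\partial_u$ acts on $M[t^{-1}]/uM[t^{-1}]=0$, so the minimal polynomial condition holds trivially. Then $\phi_u=\psi_u=0$, and the triangles relating $i^*, i^!$ to $\psi,\phi$ give the vanishing.
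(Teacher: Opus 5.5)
Your reduction to $i^*j_*N=0$ on the chart $X\times\Spec(\CC[u])$ is correct, and you use the right input: $P=M[t^{-1}]$ is finitely generated over $\cA$ (that is, over $V^0\cD$), and $u$ acts bijectively on it. The step you single out as the main obstacle is not one. With $uP=P$, the constant filtration is a good $V$-filtration with $b=1$, so $\mathrm{gr}_VP=0$ is immediate.

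The genuine gap is your last sentence. $P$ is \emph{irregular} along $u=0$, so the triangles relating $i^*,i^!$ to $\psi_u,\phi_u$ cannot just be quoted from the regular theory.
\begin{itemize}
\item The $i^!$ half is an elementary Koszul computation with the $V$-filtration. It only returns $i^!P=0$, which you already had for free.
\item The $i^*$ half comes from dualizing the $i^!$ computation for $\DD P$. It therefore needs Kashiwara--Malgrange specialization to commute with duality for modules irregular along $u=0$, in particular that $\mathrm{gr}_VP=0$ forces $\mathrm{gr}_V\DD P=0$. That is a substantial theorem which you neither prove nor cite, and it is exactly what would deliver half of the statement.
\end{itemize}
Concretely, $i^*j_*N$ has two cohomology modules. $H^0$ is the largest quotient of $j_*N$ supported at $\infty$; its vanishing does follow elementarily from your observation. $H^{-1}$ is the largest submodule of $j_!N$ supported at $\infty$. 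That is a statement about $j_!N=\DD j_*\DD N$ and cannot be read off from $P$ without some duality input.

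Separately, your claim that $i^*j_*N$ is computed by $[M[t^{-1}]\xrightarrow{\partial_u}M[t^{-1}]]$ is false. That complex is the de Rham complex along the whole $u$-line, so it computes a pushforward to $X$, not a restriction to $u=0$. For $X$ a point and $K=\cO_{\GG_a}$, its cokernel is $\CC[t,t^{-1}]/(\partial_t-\lambda)\CC[t,t^{-1}]$, which is spanned by the nonzero class of $t^{-1}$, whereas $i^*j_*N=0$. Also, in your first paragraph, duality exchanges $i^*j_*$ with $i^!j_!$, not with $i^!j_*$; you noticed this later.

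The paper avoids $V$-filtration theory and treats the two halves separately.
\begin{itemize}
\item \emph{Quotients.} Suppose $\varphi\colon M[t^{-1}]\to Q$ is surjective with $Q$ supported on $u=0$. The images of finitely many $\cA$-generators are killed by some $u^n$. Moreover $\cA$ preserves $\ker(u^n)$, since $u^n\,u\partial_u=(u\partial_u-n)u^n$. Hence $u^nQ=0$, while $uQ=Q$, so $Q=0$.
\item \emph{Submodules.} Up to shift, $\DD j_!N\simeq j_*(\DD K\otimes\mu^*\cL_{-\lambda})$. Since $\DD K$ is again regular holonomic, Lemma \ref{lem:coherence}(ii) applies to it. So the quotient statement for $(\DD K,-\lambda)$ is exactly the submodule statement for $(K,\lambda)$.
\end{itemize}
This cheap use of duality, resting on the regularity of $\DD K$, is what your final step silently presupposes. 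Replacing your appeal to the $\psi/\phi$ triangles with these two arguments completes your proof.
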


\begin{proof}
It suffices to show that the forget supports map:
\[ j_!(K\otimes \mu^*\cL_{\lambda}) \to j_*(K\otimes \mu^*\cL_{\lambda}) \]
is an isomorphism.
This question is local on $X$. So we can and will assume that $X$ is smooth affine and pass from D-modules to their global sections without further comment.
As $j$ is an affine open immersion, the functors $j_!$ and $j_*$ are t-exact.
Furthermore, the cone of the forget supports map above is $i_*i^*j_*(K\otimes \mu^*\cL_{\lambda})$. So, it suffices to show:
\begin{enumerate}[label=(\alph*)]
\item $j_*(K\otimes\mu^*\cL_{\lambda})$ admits no non-zero quotient supported on $X\times \{\infty\}$;
\item $j_!(K\otimes\mu^*\cL_{\lambda})$ admits no non-zero submodule supported on $X\times \{\infty\}$.
\end{enumerate}
These statements are Verdier dual in the sense that (b) for the pair $(K,\lambda)$ is (a) for the pair $(\DD K, -\lambda)$. Thus, it suffices to prove (a). 
Restriction to $X \times \Spec(\CC[u])$ is t-exact, and a module supported on $X \times \{\infty\}$ vanishes if and only if its restriction to $X\times \Spec(\CC[u])$ does.
Hence, we are reduced to showing that the restriction of $j_*(K\otimes\mu^*\cL_{\lambda})$ to $X \times \Spec(\CC[u])$ admits no non-zero quotient supported on $X\times \{\infty\}$.

Let $M=\Gamma(K\otimes \mu^*\cL_{\lambda})$. By Lemma \ref{lem:coherence}(i), we have $\Gamma(X \times \Spec(\CC[u]), j_*(K\otimes \mu^*\cL_{\lambda})) \simeq M[t^{-1}]$. Write $Q$ for the sections, over $X\times \Spec(\CC[u])$, of a quotient as above that is supported on $X\times \{\infty\}$.
Then we have a surjection:
\[ \varphi\colon M[t^{-1}] \to Q, \]
and:
\begin{enumerate}
\item $u$ acts invertibly on $M[t^{-1}]$;
\item every element of $Q$ is annihilated by some power of $u$, since $Q$ is supported on $X\times \{\infty\}$;
\item the algebra $\cA$ of Lemma \ref{lem:coherence} preserves $\ker(u^k\colon Q\to Q)$ for each $k$ because $\Gamma(\cD_X)$ and $u$ commute with $u^k$, while:
\[ u^k(u\partial_u \cdot v) = u\partial_u(u^k \cdot v) - ku^k\cdot v.\]
\end{enumerate}
By Lemma \ref{lem:coherence}(ii), there exists a finite set $\{v_i\}_i$ of generators for $M[t^{-1}]$ over $\cA$. By (ii) above, there is an $n$ such that $u^n\varphi(v_i) = 0$ for all $i$. Hence:
\[ Q = \varphi(M[t^{-1}]) = \sum_i \cA \cdot \varphi(v_i). \]
So by (iii), $Q\subset \ker(u^n\colon Q \to Q)$. But by (i), $uQ = \varphi(uM[t^{-1}]) = \varphi(M[t^{-1}]) = Q$. So $u^nQ = Q$, and $Q=0$.
\end{proof}

\subsubsection{}\label{s:mochizuki}
Theorem \ref{thm:forgetsupports} yields a canonical isomorphism:
\[ i_{\lambda}^* K^{\wedge}[-1] \mapright{\sim} i_{\lambda}^!K^{\wedge}[1], \]
for $\lambda\in \CC^{\times}$ and $K\in D^b_{rh}(X\times \GG_a)$.
In fact, a more refined microlocal statement is true. Namely, for $\lambda\in \CC^{\times}$ and $K$ regular holonomic on $X\times \GG_a$, the map $i_{\lambda}$ is non-characteristic for $K^{\wedge}$. This is due to T. Mochizuki (private communication).

\begin{remark}
Although a bit obscured by the language, the proof of Theorem \ref{thm:forgetsupports} essentially amounts to showing that the tame nearby cycles at $\infty$ of $K\otimes \mu^*\cL_{\lambda}$ vanish under the given hypotheses.
These statements fail without the regularity assumption: $K=\cL_{-\lambda}$ for $X=\Spec(\CC)$ provides a counterexample. For this $K$ the exponential irregularity at $\infty$ is cancelled: $\cL_{-\lambda}\otimes \cL_{\lambda} \simeq \cO_{\GG_a}$. The resulting object is clearly regular/tame at $\infty$, and the forget supports isomorphism fails.
The proof of the corresponding statement for $\ell$-adic sheaves over finite fields is pleasantly transparent. Informally, Artin-Schreier covers are so horribly ramified at $\infty$ that no tensoring with a tame sheaf can unwind them. The action of the inertia group at $\infty$ is totally wild.
\end{remark}

\end{document}